\documentclass[11pt,a4paper]{amsart}
\usepackage{a4}
\usepackage[latin1]{inputenc}
\usepackage[T1]{fontenc}
\usepackage{amsmath}
\usepackage{amsfonts,amssymb,amsthm}
\usepackage[color=green!30]{todonotes}
\usepackage{xcolor}
\usepackage{enumitem}
\usepackage[colorlinks,
    linkcolor={red!50!black},
    citecolor={blue!50!black},
    urlcolor={blue!80!black}]{hyperref}
\usepackage[nameinlink,capitalise]{cleveref}
\usepackage[xy]{qsymbols}
\usepackage{tikz-cd}
\usepackage{mathtools}
\usepackage{array}
\usepackage{arydshln}
\usepackage{bigdelim,multirow}

\setlength{\voffset}{-0.8cm}
\setlength{\hoffset}{-0.8cm}
\addtolength{\textwidth}{2cm}
\addtolength{\textheight}{0cm}

\newcommand{\GL}{\operatorname{GL}}
\newcommand{\SL}{\operatorname{SL}}
\newcommand{\SU}{\operatorname{SU}}
\newcommand{\QQ}{\mathbb{Q}}
\newcommand{\CC}{\mathbb{C}}
\newcommand{\RR}{\mathbb{R}}
\newcommand{\ZZ}{\mathbb{Z}}

\newcommand{\cM}{M_K}
\newcommand{\bma}{\begin{pmatrix}}
\newcommand{\ema}{\end{pmatrix}}
\newcommand{\bsm}{\left(\begin{smallmatrix}}
\newcommand{\esm}{\end{smallmatrix}\right)}
\newcommand{\Tr}{\operatorname{Tr}}
\newcommand{\Ad}{\operatorname{Ad}}
\newcommand{\Adr}{\Ad\circ\rho}

\newcommand{\tor}{\operatorname{tor}}
\newcommand{\ord}{\operatorname{ord}}

%Theoremstyle

\newtheorem{theorem}{Theorem}[section]

\newtheorem{lemma}[theorem]{Lemma}
\newtheorem{proposition}[theorem]{Proposition}

\newtheorem*{claim}{Claim}

\theoremstyle{definition}
\newtheorem{definition}[theorem]{Definition}

\newtheorem{example}[theorem]{Example}

\newtheorem{remark}[theorem]{Remark}

%Titre & divers
\subjclass[2000]{Primary: 57K10, Secondary: 57K14, Tertiary: 57K31}
\keywords{}

%\crefname{collitem}{}{}
%\creflabelformat{collitem}{#2\cref{prop:collect}~(#1)#3}

\begin{document}

\title{A Slope invariant and the A-polynomial of knots}
\begin{abstract}
	The A-polynomial is a knot invariant related to the space of $\SL_2(\CC)$ representations of the knot group. 	
		%Inspired by the results of Boyer and Zhang, and Dunfield and Garoufalidis, we
%	observe that the \emph{logarithmic Gauss map} of the $A$-polynomial 
%	detects the trivial knot.
	In this paper our interests lies in the \emph{logarithmic Gauss map} of the $A$-polynomial. We develop a homological point of view on this function by extending the constructions of Degtyarev, the second author and Lecuona to the setting of non-abelian representations. It defines a rational function on the character variety, which unifies various known invariants such as the \emph{change of curves} in the Reidemeister function, the \emph{modulus} of boundary-parabolic representations, the \emph{boundary slope} of some incompressible surfaces embedded in the exterior of the knot $K$ or equivalently the slopes of the sides of the Newton polygon of the A-polynomial $A_K$. We also present a method to compute this invariant in terms of Alexander matrices and Fox calculus.
\end{abstract}
\author{Leo Benard}
\address{Mathematisches Institut, Georg-August Unversit\"at, G\"ottingen, Deutschland}
\email{leo.benard@mathematik.uni-goettingen.de}
\author{Vincent Florens}
\address{Universit\'e de Pau, Pau, France}
\email{vincent.florens@univ-pau.fr}
\author{Adrien Rodau}
\address{Universit\'e de Pau, France}
\email{arodau@univ-pau.fr}
\date{}

\maketitle

\section{Introduction}
The set of all representations of a knot group in $\SL_2(\CC)$ carries naturally the structure of an algebraic set.  This holds also for the characters of these representations, whose set is called the $\SL_2(\CC)$-character variety of the knot.
Given a peripheral structure of the knot, the character variety
is  a plane curve in $\CC^* \times \CC^*$, whose coordinates $M$ and $L$ correspond to the eigenvalues of the meridian $m$ and the prefered longitude $\ell$. The polynomial $A_K(L,M)$ defining this curve is an invariant of the knot, called the A-polynomial.
 This invariant contains many interesting informations on the knot; in particular, Boyer and Zhang \cite{Boyer_Zhang} and Dunfield and Garoufalidis \cite{Dunfield_Garouf} showed that $A_K=L-1$ if and only if $K$ is trivial.

%Our first result states that a weaker information derived from the $A$-polynomial is enough to detect the unknot:
In this paper, our motivations come, among others, from the following result of Boden:
\begin{theorem}[\cite{Boden}]
	\label{theo:detect}
	If the $M$-degree $\deg_M A_K(L,M)$  of the $A$-polynomial  is zero, then $K$ is the trivial knot.
\end{theorem}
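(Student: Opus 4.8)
The plan is to prove the \emph{contrapositive}: if $K$ is non-trivial then $\deg_M A_K(L,M)\geq 1$. I would first record the factorization forced by the abelian characters. Since the preferred longitude $\ell$ is null-homologous, every abelian (indeed every reducible) representation sends $\ell$ to the identity, so $L=1$ on every reducible character; hence $(L-1)\mid A_K$, and this factor has $M$-degree $0$. The task is therefore to show that a non-trivial knot forces $A_K$ to acquire a further factor genuinely involving $M$. Combined with the criterion quoted above (Boyer--Zhang, Dunfield--Garoufalidis: $A_K\doteq L-1\iff K$ trivial), it is in fact enough to show that $\deg_M A_K=0$ implies $A_K\doteq L-1$.

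Next I would set up the non-abelian side. Suppose $\deg_M A_K=0$, so $A_K(L,M)=(L-1)\,g(L)$ for a one-variable polynomial $g$, and suppose for contradiction that $g$ is non-constant with a root $\lambda\neq 1$. Then the entire horizontal line $\{L=\lambda,\ M\in\CC^*\}$ lies in the zero locus of $A_K$, i.e.\ in the closure of the image of the restriction map from $X(K)$ to the eigenvalue variety of $\partial$. A key elementary observation is that every character with $L=\lambda\neq 1$ is \emph{irreducible}: a reducible character has the same trace functions as its diagonalization, which kills $\ell$ and forces $\mathrm{tr}\,\rho(\ell)=2$, i.e.\ $\lambda=1$. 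Thus this whole line is covered by irreducible characters, giving a one-parameter family of irreducible representations on which the longitude keeps a fixed non-central eigenvalue $\lambda$ while the meridian eigenvalue $M$ sweeps out $\CC^*$. Equivalently, the meridian trace $\mathrm{tr}\,\rho(m)=M+M^{-1}$ is non-constant while $\mathrm{tr}\,\rho(\ell)=\lambda+\lambda^{-1}$ is constant and $\neq 2$ on a positive-dimensional component of irreducible characters.

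The cleanest tool to rule this out is the Culler--Shalen / CCGLS correspondence between ideal points of the character variety, essential surfaces in the exterior, and the slopes of the sides of the Newton polygon of $A_K$. For a non-trivial knot a minimal-genus Seifert surface is incompressible and carries the longitudinal boundary slope $0$; the corresponding side of the Newton polygon is horizontal, which is exactly the assertion $\deg_M A_K>0$. Running this directly on a non-trivial $K$ already proves the contrapositive, and it simultaneously contradicts the degenerate (vertical-segment) Newton polygon produced by the hypothesis $\deg_M A_K=0$.

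The main obstacle is precisely this detection step: one must be sure that the longitudinal slope $0$ is genuinely realized as a side of the Newton polygon rather than being swamped by the abelian factor $L-1$, and one must control the meridian trace on components of irreducible characters for knots that are not hyperbolic (torus knots and satellites), where the naive count "fixing the traces of the Wirtinger meridians leaves only finitely many characters" fails, since for two-generator groups the trace of a product remains free. I would therefore present the boundary-slope argument as the main line, using the homological reformulation of the logarithmic Gauss map developed in this paper to make the appearance of the longitudinal side transparent, and I expect verifying this non-degeneracy of the Newton polygon to be the technical crux of the proof.
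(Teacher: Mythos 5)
You should first note that the paper does not prove this statement at all: \cref{theo:detect} is quoted from \cite{Boden} as motivation (just as the Boyer--Zhang/Dunfield--Garoufalidis unknot-detection theorem is quoted in \cref{subsec:Apol}), so there is no internal proof to compare against and your proposal must stand on its own. Its preliminary reductions are sound, and they do match the opening of Boden's argument: $(L-1)$ divides $A_K$; a reducible character sends $\ell$ to a unipotent element, so $L=1$ there, and hence every character lying over a line $\{L=\lambda\}$ with $\lambda\neq 1$ is irreducible; and by the cited theorem of \cite{Boyer_Zhang,Dunfield_Garouf} it suffices to exclude a non-abelian factor of $M$-degree zero, i.e.\ a component of irreducible characters whose image in $X(\partial \cM)$ is a horizontal line with constant longitude eigenvalue and non-constant meridian eigenvalue.

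The step you yourself flag as the crux is, however, not a technical gap but a false implication. You invoke the \emph{converse} of the CCGLS theorem: that the boundary slope $0$ of an incompressible Seifert surface must appear as the slope of a side of the Newton polygon of $A_K$. CCGLS only goes one way (Newton polygon slopes \emph{are} boundary slopes); detection fails in general, and the figure-eight knot is an explicit counterexample: its boundary slopes are $0,\pm 4$, but the Newton polygon of the non-abelian factor of $A_{4_1}$ is a quadrilateral with vertices $(0,4),(1,0),(2,4),(1,8)$, all of whose sides have slope $\pm 4$ --- the Seifert slope $0$ is simply not detected. Moreover, even if slope $0$ did appear as a side, this would produce no contradiction with $\deg_M A_K=0$: under that hypothesis the Newton polygon degenerates to a horizontal segment, whose sides have slope exactly $0$, and in any case the abelian factor $L-1$ always contributes horizontal edges to the Newton polygon of the full $A$-polynomial via the Minkowski sum, independently of knottedness. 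So your equivalence ``a horizontal side is exactly the assertion $\deg_M A_K>0$'' is backwards, and your claim that ``running this directly on a non-trivial $K$ already proves the contrapositive'' would, if correct, yield unknot detection by the $A$-polynomial with no gauge-theoretic input whatsoever --- bypassing the Kronheimer--Mrowka results on which \cite{Boyer_Zhang,Dunfield_Garouf}, and hence \cite{Boden}, essentially rely; this is a strong sanity check that the mechanism cannot work. What is actually needed is an argument, at ideal points of the character variety component itself (where $\operatorname{tr}\rho(m)\to\infty$ while $\operatorname{tr}\rho(\ell)$ stays bounded), that a component of irreducible characters cannot have constant longitude eigenvalue; this finer Culler--Shalen analysis is the content of Boden's proof and is absent from your proposal.
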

This result motivates the systematic study of the \emph{logarithmic Gauss map} of the A-polynomial
\begin{equation}
	\label{eq:slope}
	 \frac M L \cdot \frac{\partial_M A_K(L,M)}{\partial_L A_K(L,M)},
\end{equation}
 where $\partial_M$ and $\partial_L$ denote the partial derivatives.
By \cref{theo:detect}, this rational function vanishes identically on $\{ A_K =0 \}$ if and only if $K$ is trivial.

The logarithmic Gauss map has introduced in \cite{GKZ94} by Guelfand, Kapranov and Zelevinsky in order to study some determinantial varieties. Then it has been used for instance by Mikhalkin in \cite{Mikh} for studying the topology of arrangements of real plane curves. In \cite{GuiMar}, Marché and Guilloux showed it is related with the volume function of the A-polynomial of knots, or more generally of exact polynomials.

Our proposal is to develop a homological point of view on this function, by extending the constructions of Degtyarev, the second author and Lecuona \cite{DFL1,DFL2} to the setting of non-abelian representations. Let $K$ be an oriented knot in the 3-sphere $S^3$ with exterior $\cM$. Denote by $R(\cM)$ and $X(\cM)$ the $\SL_2(\CC)$-representation and character varieties of the knot~$K$. We consider representations $\rho \colon \pi_1(\cM) \to \SL_2(\CC)$ composed with the adjoint action of $\SL_2(\CC)$ on the Lie algebra $\Ad\colon \SL_2(\CC) \to \operatorname{Aut}(\mathfrak{sl}_2(\CC))$, and show that there is a non-empty Zariski open subset of $X(\cM)$ such that for all $\rho$ in this subset
\begin{itemize}[label=-]
	\item there is an element $v_\rho \in \mathfrak{sl}_2(\CC)$ such that $(v_\rho \otimes \ell,v_\rho \otimes m)$ is a basis of the homology group $H_1(\partial \cM, \Adr) \simeq \CC^2$ with coefficients twisted by $\Adr$, and
	\item the kernel of the homomorphism induced by the inclusion:
	\[\mathcal Z(K,\Adr) = \ker \left( H_1(\partial \cM, \Adr) \stackrel{i_*} \longrightarrow H_1(\cM, \Adr)\right)\]
	is generated by a single vector of the form $a\, v_\rho \otimes \ell + b \, v_\rho \otimes m$ for some $[b:a] \in \CC \mathbb P^1$.
\end{itemize}
The representations which verify these conditions are called \emph{admissible}. We define the slope of $K$ at the admissible representation $\rho$  by
\[s_K(\rho) = -\frac{b}{a} \in \CC \mathbb P^1.\]
We prove that representations which restrict to \emph{non-parabolic} representations of the boundary $\partial \cM$ of $\cM$ are admissible, see \cref{lem:dim1}.
%For
If $\rho$ is a boundary-parabolic representation, we define the slope $s_K(\rho)$ as the modulus of the euclidean structure induced by the restricted representation
% \colon \pi_1(\partial \cM) \to \SL_2(\CC)$
on $\pi_1(\partial \cM)$, see \cref{subsec:reg}. It turns out that these two different definitions fit well and that the following holds.

\begin{proposition}
	\label{prop:rational}
	The slope depends only on the conjugacy classes of the representations and induces a rational function
  \[s_K : 	X \subset X(\cM) \longrightarrow \CC \mathbb P^1\]
	on each {irreducible} component $X$ of the character variety.
\end{proposition}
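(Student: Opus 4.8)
The plan is to separate the statement into two parts: first that the slope is a conjugation invariant (so that it descends from $R(\cM)$ to $X(\cM)$), and second that the resulting function is rational on each component. For the invariance, I would exploit that for $g\in\SL_2(\CC)$ one has $\Ad\circ(g\rho g^{-1})=\Ad(g)\circ\Adr\circ\Ad(g)^{-1}$, so that the linear map $v\mapsto\Ad(g)v$ on the coefficient module $\mathfrak{sl}_2(\CC)$ is equivariant from $(\mathfrak{sl}_2(\CC),\Adr)$ to $(\mathfrak{sl}_2(\CC),\Ad\circ g\rho g^{-1})$. This induces an isomorphism of twisted chain complexes for any space mapping to $\cM$, natural with respect to the inclusion $\partial\cM\hookrightarrow\cM$; hence it commutes with $i_*$ and carries $\mathcal Z(K,\Adr)$ isomorphically onto $\mathcal Z(K,\Ad\circ g\rho g^{-1})$. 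On $H_1(\partial\cM,\Adr)$ it sends the line $\CC v_\rho$ (the $+1$-eigenline of $\Adr(m)$) to $\Ad(g)\,\CC v_\rho=\CC v_{g\rho g^{-1}}$, hence $v_\rho\otimes\ell\mapsto v_{g\rho g^{-1}}\otimes\ell$ and $v_\rho\otimes m\mapsto v_{g\rho g^{-1}}\otimes m$ up to a common scalar. The generator $a\,v_\rho\otimes\ell+b\,v_\rho\otimes m$ of $\mathcal Z$ is therefore carried to the generator with the same $[b:a]$, so $s_K(g\rho g^{-1})=s_K(\rho)$ and $s_K$ factors through characters.

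Next I would establish rationality on the admissible locus. Fixing a finite presentation of $\pi_1(\cM)$, the associated presentation $2$-complex, and the product cell structure on $\partial\cM=T^2$, the twisted chain groups $C_*(\cM,\Adr)$ and $C_*(\partial\cM,\Adr)$ are finite-dimensional $\CC$-vector spaces whose differentials and the chain map induced by $i$ are obtained by evaluating Fox derivatives through $\Adr$. Their matrix entries are thus polynomial in the coordinates of $\rho\in R(\cM)$, i.e.\ regular functions. On the Zariski-open admissible locus $U\subset R(\cM)$ all these maps have locally constant rank, so $\ker$, $\im$ and $H_1(\partial\cM,\Adr)$ assemble into algebraic vector bundles over $U$. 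The line $\CC v_\rho$ is the $+1$-eigenline of $\Adr(m)$ and varies algebraically, the pair $\bigl(v_\rho\otimes\ell,\,v_\rho\otimes m\bigr)$ is an algebraic frame of the rank-$2$ bundle $H_1(\partial\cM,\Adr)$, and by admissibility $\mathcal Z(K,\Adr)$ is a rank-$1$ subbundle. The slope $[b:a]$ is exactly the position of this subbundle in the chosen frame: writing $u_\ell=i_*(v_\rho\otimes\ell)$ and $u_m=i_*(v_\rho\otimes m)$, these are proportional on $U$ (the kernel being one-dimensional), and solving $a\,u_\ell+b\,u_m=0$ by Cramer's rule exhibits $[b:a]$ as a rational map $U\to\CC\mathbb P^1$. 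Rescaling $v_\rho$ multiplies $a$ and $b$ by the same scalar, so the ratio is independent of that choice. By the first paragraph this map is $\SL_2(\CC)$-invariant, hence descends to a rational map on the image of $U$ in $X(\cM)$.

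Finally I would globalize and reconcile the two definitions. By \cref{lem:dim1} the non-parabolic boundary representations are admissible, and they form a non-empty Zariski-open subset of each irreducible component $X$, the complement being the boundary-parabolic locus cut out by trace conditions such as $\Tr\Adr(m)=3$. Since a rational map defined on a dense open subset of an irreducible variety extends uniquely, $s_K$ defines a rational function $X\to\CC\mathbb P^1$. The remaining point is to check that at the boundary-parabolic characters of $X$ this rational function takes the value given by the modulus of the induced euclidean structure of \cref{subsec:reg}. I expect this compatibility to be the main obstacle: at parabolic representations $\Adr(m)$ is no longer diagonalizable, the $+1$-eigenline degenerates, and $H_1(\partial\cM,\Adr)$ loses its clean non-parabolic description, so one must compute the limit of the eigenline and of the cycle $\mathcal Z$ as the representation becomes parabolic and match it against the modulus. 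This is a local analysis along the parabolic locus rather than a global difficulty, and once it is carried out the two definitions assemble into the single rational function asserted by the proposition.
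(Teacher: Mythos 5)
Your first two paragraphs essentially reproduce the paper's own argument: conjugation invariance via the induced isomorphism on twisted homology is \cref{lem:conj}, and rationality near irreducible non-boundary-parabolic characters (algebraic dependence of the frame $(v_\rho\otimes\ell,\,v_\rho\otimes m)$ and of the line $\mathcal Z$ on $\rho$) is \cref{lem:rational}. The genuine gap is in your third paragraph: you defer exactly the step that is the heart of the paper's proof. Since the slope at a boundary-parabolic representation is \emph{defined} to be the modulus $\tau(\chi)$ (\cref{subsec:reg}), the proposition is not proved until one shows that the rational function obtained by continuation from the non-boundary-parabolic locus takes the value $\tau(\chi_0)$ at every boundary-parabolic $\chi_0\in X$; otherwise ``the slope'', as a function on all of $X$, is not known to coincide with a rational function along the parabolic locus. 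The paper carries this out in \cref{prop:cusp}: choosing representatives with $\rho(m)=\bsm M & 1 \\ 0 & M^{-1}\esm$ and invariant vectors $v_\rho=\bsm M-M^{-1} & 2 \\ 0 & M^{-1}-M \esm$, one finds that in the parabolic limit the frame degenerates ($v_{\rho_0}\otimes\ell$ and $v_{\rho_0}\otimes m$ become proportional in $H_1(\partial\cM,\Adr_0)$), but $i_*$ has rank one both near and at $\rho_0$, so the slope can instead be computed as the ratio of $i_*(v_\rho\otimes\ell)$ and $i_*(v_\rho\otimes m)$; an explicit computation of the boundary operator then yields $v_{\rho_0}\otimes\ell=\tau(\chi_0)\,v_{\rho_0}\otimes m$ in $H_1(\partial\cM,\Adr_0)$, whence the limit equals the modulus. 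Your proposal correctly locates this difficulty but contains no argument for it, so the extension across the parabolic locus, and its agreement with the modulus definition, remain unproved.

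Two smaller points. First, your globalization assumes that the non-boundary-parabolic characters form a non-empty (hence dense) open subset of \emph{every} irreducible component; this is not justified, and the paper does not assume it: \cref{prop:cusp} treats separately the case where $X$ consists entirely of boundary-parabolic characters, arguing directly that $\chi\mapsto\tau(\chi)$ is rational there. Second, conjugation invariance alone does not give that $s_K$ ``factors through characters'': non-conjugate reducible representations can share a character and carry different slope values (\cref{remk:abel} --- for the trefoil, $0$ versus $-6$ at a point of $X^{\text{red}}\cap X^{\text{irr}}$). The paper obtains well-definedness at irreducible characters from \cref{prop:conju}, and on the reducible component fixes the convention of abelian representatives, where the slope vanishes identically by \cref{lem:abel}; your sentence ``hence descends to a rational map on the image of $U$ in $X(\cM)$'' needs this distinction to be made precise.
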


Note that if the representation is real or unitary, then $s_K$ takes values in $\RR \mathbb P^1$ (see \cref{prop:real}).
{For any knot, the function $s_K$ can be computed by Fox calculus, see \cref{subsec:Alex}. We illustrate the method in the case of the trefoil knot, and further compute the slope of the figure-eight knot.}

{The following theorem relates $s_K$ to the original motivation; a precise statement is given in \cref{th:slopeApoly}}.
\begin{theorem}
	\label{prop:slope^2}
	The slope function $s_K$ equals minus the logarithmic Gauss map of the A-polynomial defined in \cref{eq:slope}.
\end{theorem}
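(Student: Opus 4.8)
The plan is to express both quantities as the logarithmic derivative $d\log L/d\log M$ along the curve $\{A_K=0\}$ and to match them through Poincar\'e--Lefschetz duality. First I would rewrite the Gauss map analytically: implicit differentiation of $A_K(L,M)=0$ gives $\partial_L A_K\,dL+\partial_M A_K\,dM=0$, so
\[
\frac{M}{L}\cdot\frac{\partial_M A_K(L,M)}{\partial_L A_K(L,M)}=-\frac{M}{L}\frac{dL}{dM}=-\frac{d\log L}{d\log M}.
\]
It therefore suffices to prove that $s_K(\rho)=d\log L/d\log M$ at an admissible representation, the precise statement being the one recorded in \cref{th:slopeApoly}.

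Next I would fix boundary coordinates at a non-parabolic representation $\rho$. As $m$ and $\ell$ commute and $\rho$ is non-parabolic, we may conjugate so that $\rho(m)$ and $\rho(\ell)$ are diagonal with eigenvalues $M$ and $L$. The Cartan element $H=\mathrm{diag}(1,-1)$ is fixed by $\Adr$ on $\partial\cM$, so we set $v_\rho=H$; the two root spaces are scaled by $M^{\pm2}$ and $L^{\pm2}$ and hence contribute trivially to the twisted homology, recovering the basis $(v_\rho\otimes\ell, v_\rho\otimes m)$ of $H_1(\partial\cM,\Adr)$ from the admissibility hypothesis.

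The heart of the argument is to compare a tangent vector of the character variety with the line $\mathcal Z(K,\Adr)=\ker i_*$. A smooth deformation $\rho_t$ with $\rho_0=\rho$ produces a cocycle $u(\gamma)=\frac{d}{dt}\big|_{0}\rho_t(\gamma)\rho(\gamma)^{-1}$, defining a class in $H^1(\cM,\Adr)$ whose image under the eigenvalue map is $(d\log M, d\log L)$. Keeping the boundary diagonal, one finds $u(m)=(\dot M/M)H$ and $u(\ell)=(\dot L/L)H$; using the trace form $\langle X,Y\rangle=\Tr(XY)$ to pair coefficients, the restriction $i^*[u]$ evaluates to $2\,\dot M/M$ on $v_\rho\otimes m$ and to $2\,\dot L/L$ on $v_\rho\otimes\ell$ (the root-space directions do not contribute). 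By the half-lives-half-dies principle, $\im\bigl(i^*\colon H^1(\cM,\Adr)\to H^1(\partial\cM,\Adr)\bigr)$ is the annihilator of $\ker i_*$ for the evaluation pairing, since $\langle i^*\alpha,\beta\rangle=\langle\alpha,i_*\beta\rangle$. Pairing $i^*[u]$ with the generator $a\,v_\rho\otimes\ell+b\,v_\rho\otimes m$ of $\ker i_*$ then gives $a\,\dot L/L+b\,\dot M/M=0$, whence
\[
s_K(\rho)=-\frac{b}{a}=\frac{\dot L/L}{\dot M/M}=\frac{d\log L}{d\log M}=-\frac{M}{L}\cdot\frac{\partial_M A_K}{\partial_L A_K}.
\]

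The step I expect to be the main obstacle is making this duality rigorous with twisted coefficients. One must check that the trace form induces a perfect pairing between $H^1(\partial\cM,\Adr)$ and $H_1(\partial\cM,\Adr)$ compatible with the Kronecker evaluation, and that the equality $\im i^*=\mathrm{Ann}(\ker i_*)$ holds; this rests on $\Adr$ being self-dual via the trace form, so that Poincar\'e--Lefschetz duality applies and the image of $i^*$ is a Lagrangian of the correct dimension. A secondary point is to verify that the differential of the eigenvalue map $X\to\CC^*\times\CC^*$ indeed sends the tangent cohomology class to $(\dot M/M,\dot L/L)$, and to extend the identity to the boundary-parabolic locus by continuity, matching the modulus definition of $s_K$ there with the limiting value of $d\log L/d\log M$.
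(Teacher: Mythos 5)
Your argument is correct, and at its core it is the tangent-space (dual) rendering of the paper's proof of \cref{th:slopeApoly}: both arguments identify the line $\mathcal Z(K,\Adr)=\ker i_*$ with the conormal direction of the curve $\{A_K=0\}$ and then compute in the logarithmic coordinates $\mathfrak l=\log L$, $\mathfrak m=\log M$. The difference lies in how that identification is established. The paper invokes \cref{prop:Cotangent} (Sikora's identification of $H_1(\cM,\Adr)$ and $H_1(\partial\cM,\Adr)$ with Zariski cotangent spaces of the character varieties) and then reads off that $\ker i_*$ is spanned by $dA=(L\,\partial_LA)\,d\mathfrak l+(M\,\partial_MA)\,d\mathfrak m$; you instead prove the needed statement by hand, via deformation cocycles with boundary values $(\dot M/M)H$ and $(\dot L/L)H$, the trace-form Kronecker pairing, and the adjunction $\langle i^*\alpha,\beta\rangle=\langle\alpha,i_*\beta\rangle$, which yields $a\,\dot L/L+b\,\dot M/M=0$ and hence the formula by implicit differentiation. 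What your route buys: it is self-contained (no appeal to Sikora's theorem), and in fact it needs less than you fear --- only the containment $\im i^*\subseteq\operatorname{Ann}(\ker i_*)$, which is pure naturality of the evaluation pairing; the full ``half-lives-half-dies'' equality and the Poincar\'e--Lefschetz duality you single out as the main obstacle are not required (coboundaries pair to zero with cycles, so your computation with the diagonal cocycle representative is already well defined). What it costs: you must produce an actual path $\rho_t$ whose boundary eigenvalues move with nonzero velocity, which requires a smooth point of $X$, a local lift to $R(\cM)$ (available near irreducible characters), and the hypothesis that $r(X)$ is one-dimensional --- this is exactly where the assumption of \cref{th:slopeApoly} enters and should be stated; the identity then propagates from this Zariski-dense set to all of $X$, including the boundary-parabolic locus, not by ``continuity'' but because both sides are rational functions on $X$ (\cref{prop:rational}), which is also how the paper's ``generic'' identification is implicitly globalized.
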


We also relate $s_K$ to the change of curve factor for the Reidemeister torsion.
Let $\mathbb T_{\cM, \ell}(\rho)$ and $\mathbb T_{\cM, m}(\rho)$ be the Reidemeister torsions according to homology bases induced by the choices of the curves $\ell$ and $m$ in $\partial \cM$, see \cref{subsec:tors}.
\begin{proposition}
		\label{prop:collTors}
		The slope 
		coincides with the quotient of Reidemeister torsion:
		\[s_K(\rho) = \frac{\mathbb T_{\cM, \ell}(\rho)}{\mathbb T_{\cM, m}(\rho)}\]
		for all $\rho$ such that this formula is well-defined.
\end{proposition}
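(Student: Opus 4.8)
The plan is to exploit the fact that $\mathbb T_{\cM, \ell}(\rho)$ and $\mathbb T_{\cM, m}(\rho)$ are the torsions of one and the same chain complex --- the twisted cellular complex $C_*(\cM, \Adr)$ --- computed with respect to two homology bases that differ in a single vector. Recall from \cref{subsec:tors} that the curve $\gamma \in \{\ell, m\}$ enters the definition of $\mathbb T_{\cM, \gamma}$ only through the class $i_*(v_\rho \otimes \gamma) \in H_1(\cM, \Adr)$, which serves as the distinguished generator of this (generically one-dimensional) group; the bases chosen for the remaining twisted homology of $\cM$, in particular for $H_2(\cM, \Adr)$, are identical in both computations and hence cancel in the quotient. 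First I would record that for an admissible $\rho$ the image $i_*\bigl(H_1(\partial \cM, \Adr)\bigr)$ is one-dimensional, since its kernel $\mathcal Z(K, \Adr)$ is one-dimensional inside the two-dimensional source $H_1(\partial \cM, \Adr)$, so that $i_*(v_\rho \otimes \ell)$ and $i_*(v_\rho \otimes m)$ span the same line.

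Next I would translate the definition of the slope into a relation between the two generators. Writing the generator of $\mathcal Z(K, \Adr)$ as $a\, v_\rho \otimes \ell + b\, v_\rho \otimes m$ and applying $i_*$ yields
\[
a\, i_*(v_\rho \otimes \ell) + b\, i_*(v_\rho \otimes m) = 0,
\]
whence, on the locus where the formula is well-defined --- that is, where $a, b \neq 0$, so that both generators are nonzero and both torsions are finite and nonzero ---
\[
i_*(v_\rho \otimes \ell) = -\tfrac{b}{a}\, i_*(v_\rho \otimes m) = s_K(\rho)\, i_*(v_\rho \otimes m).
\]
Thus passing from the homology basis used for $\mathbb T_{\cM, m}$ to the one used for $\mathbb T_{\cM, \ell}$ amounts precisely to rescaling the chosen generator of $H_1(\cM, \Adr)$ by the scalar $s_K(\rho)$, everything else remaining fixed.

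It then remains to invoke the change-of-basis formula for Reidemeister torsion: multiplying a homology basis vector in degree $d$ by a scalar $\lambda$ multiplies the torsion by $\lambda^{(-1)^{d+1}}$. Here $d = 1$, so the exponent is $+1$, and therefore $\mathbb T_{\cM, \ell}(\rho) = s_K(\rho)\, \mathbb T_{\cM, m}(\rho)$, which is the claimed identity. The main point requiring care is exactly this bookkeeping: I must confirm, from the definition of $\mathbb T_{\cM, \gamma}$ in \cref{subsec:tors}, that $\gamma$ affects only the degree-one basis vector, with the remaining homology bases (notably in degree two) held fixed and hence cancelling in the quotient, and that the sign convention in force places the relevant factor in degree one with exponent $+1$ rather than $-1$ --- an inverted convention would instead produce $s_K(\rho)^{-1}$. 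Once these conventions are pinned down the identity is immediate.
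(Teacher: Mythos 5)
Your proposal is correct and follows essentially the same route as the paper: the paper's proof also fixes the $H_2(\cM,\Adr)$-basis (via Porti's Proposition 3.18), uses the generator $a\,(v\otimes\ell)+b\,(v\otimes m)$ of $\mathcal Z(K,\Adr)$ to get the relation $i_*(v\otimes\ell)=s_K(\rho)\,i_*(v\otimes m)$ modulo boundaries, and concludes by a Cramer-determinant computation that is exactly the change-of-homology-basis formula you invoke (and which confirms the exponent $+1$ in degree one under the convention in force). The only difference is cosmetic --- the paper verifies the basis-change factor explicitly at the chain level, and additionally spells out the degenerate case where no $\ell$-regular character exists, which your restriction to $a,b\neq 0$ sidesteps but which is anyway covered by the proposition's ``well-defined'' hypothesis.
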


Porti had already observed (\cite[Corollary 4.9]{Porti97}) that the logarithmic Gauss map of the A-polynomial could be expressed as a ratio of torsions -up to a sign-, and that this ratio of torsions is equal to the modulus of $\rho$ when it is a boundary-parabolic representation (\cite[Proposition 4.7]{Porti97}). Our point of view permits to fix and compute the sign ambiguity. Moreover, our results \cref{prop:rational} and \cref{prop:slope^2} are more general, since they do not require the Reidemeister torsion to be well-defined, for instance they hold for high dimensional components of the character variety.

Finally, we consider \emph{ideal} points of the A-polynomial, those are points added at infinity in a compactification of the curve $\{A(L,M)=0\}$ in $\CC^2$. In \cite{CS83}, Culler and Shalen constructed incompressible surfaces in $\cM$ associated to such points.
 Those surfaces have a non-empty boundary, whose slope is determined by 
rational number $p/q$.
% corresponding to slopes of sides of the Newton polygon of the A-polynomial.
We prove the following theorem:
\begin{theorem}
	\label{theo:slope}
	Let $y \in \CC\mathbb P^2$ be an ideal point of 
	the curve $\{ A_K(L,M)=0 \}$.
	The
	value of 
	$s_K$ at $y$ equal \emph{minus} the slope of the Culler--Shalen incompressible surface associated to $y$.
%or equivalently coincides with \emph{minus} the slope of the corresponding side of the Newton polygon of $A(L,M)$.
\end{theorem}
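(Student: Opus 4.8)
The plan is to deduce the statement from the identification of $s_K$ with the logarithmic Gauss map provided by \cref{prop:slope^2}, by analysing the behaviour of that map at the ends of the curve $C = \{A_K(L,M)=0\}\subset\CC^*\times\CC^*$. First I would record that the eigenvalue map $X\to C$, $[\rho]\mapsto(M,L)$, sends the chosen component $X$ onto a branch of $C$, and that an ideal point $y\in\CC\mathbb P^2$ of $C$ is the image of an ideal point of $X$. By \cref{prop:slope^2} the function $s_K$ is the pullback of the rational function $-\,\tfrac ML\,\partial_M A_K/\partial_L A_K$ of $(L,M)$ on $C$; being rational it extends over the normalization of the projective closure $\bar C\subset\CC\mathbb P^2$, so that $s_K(y)$ is the limit of $-\,\tfrac ML\,\partial_M A_K/\partial_L A_K$ along the branch of $C$ approaching $y$. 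It then remains to compute this limit and to match it with the Culler--Shalen slope.

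Second, I would compute this limit in terms of the Newton polygon $N$ of $A_K$. An ideal point $y$ singles out a side $E$ of $N$; writing $(p,q)$ for its primitive outward normal, the side has direction $(-q,p)$, and the edge polynomial is quasi-homogeneous: $A_E(L,M)=L^{i_0}M^{j_0}\,P(L^{-q}M^{p})$ for a one-variable polynomial $P$. The substitution $L=s^{p}u$, $M=s^{q}w$ with $s\to\infty$ maximizes $pi+qj$ exactly on $E$, exhibiting $C$ as asymptotic to $\{A_E=0\}$ along the branch attached to $E$, so that the limit of the Gauss map equals its value on $\{A_E=0\}$. On the latter $P(L^{-q}M^{p})=0$, the leading terms of the partial derivatives cancel, and a direct differentiation gives
\[
\frac ML\cdot\frac{\partial_M A_E}{\partial_L A_E}=-\frac pq,
\]
independently of the point. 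The value $-p/q$ is precisely the slope $(\Delta\deg_M)/(\Delta\deg_L)$ of the side $E$, and by \cref{prop:slope^2} we conclude $s_K(y)=p/q$.

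Third, I would invoke the Culler--Shalen construction \cite{CS83}: the incompressible surface associated to the ideal point over $y$ has boundary slope, in the peripheral basis $(m,\ell)$, equal to the slope of the side $E$ of the Newton polygon, that is $-p/q$. Comparing with the previous step yields $s_K(y)=-\,(\text{boundary slope of the Culler--Shalen surface})$, as claimed.

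The main obstacle is the second step, together with the orientation bookkeeping it conceals. Proving that the chosen branch of $C$ is genuinely governed by $E$ (and not by a lower-order correction) requires controlling the Puiseux expansion of $C$ at $y$ and identifying which side of $N$ corresponds to a given ideal point of $X$; one must also verify that $y$ is not a zero of $P'$, so that the Gauss map actually converges rather than developing an indeterminacy. Finally, the single overall minus sign in the statement is the combined effect of the minus sign in \cref{prop:slope^2} and the sign conventions entering the Culler--Shalen correspondence---outward versus inward normal, the orientations of $m$ and $\ell$, and the meaning of the fraction $p/q$ as a boundary slope---so reconciling these conventions carefully is where the delicate part of the argument lies.
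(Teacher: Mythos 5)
Your outline is essentially correct, and it takes a genuinely different route from the paper. You factor the argument through the Newton polygon: $s_K$ is minus the logarithmic Gauss map (\cref{th:slopeApoly}), the Gauss map tends at the ideal point to the slope of the corresponding side $E$ (your edge--polynomial computation), and the side slopes match Culler--Shalen boundary slopes by \cite[Theorem 3.4]{CCGLS}. The paper's proof of \cref{theo:slopeproof} bypasses the Newton polygon entirely: starting from the expression $s_K = r^*(dL/L)/r^*(dM/M)$ obtained in the proof of \cref{prop:slopetors}, it picks a local coordinate $t$ at $y$ on the smooth projective model of $Y$, writes $L=u_1t^{v_y(L)}$ and $M=u_2t^{v_y(M)}$ with $u_1,u_2$ units, and reads off $s_K(y)=v_y(L)/v_y(M)$ because the simple poles dominate; it then quotes \cite[Proposition 3.1]{CCGLS}, which states precisely that $-v_y(L)/v_y(M)$ is the boundary slope of the surface attached to $y$. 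The valuation route is shorter and needs no asymptotic analysis of the defining polynomial.

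The comparison also locates your two acknowledged obstacles and shows they are real but best resolved by switching to the paper's tools. First, the step ``the limit of the Gauss map along the branch equals its value on $\{A_E=0\}$'' is exactly what the valuation computation proves, unconditionally: on the smooth model the rational function $s_K$ has a well-defined value at $y$, and the units $u_1,u_2$ cannot interfere with the poles $v_y(L)/t$, $v_y(M)/t$; in particular your worry about zeros of $P'$ (multiple roots of the edge polynomial) disappears. Second, your citation of \cite[Theorem 3.4]{CCGLS} only gives a global coincidence of two sets of slopes, whereas the statement requires matching the specific side $E$ singled out by $y$ with the specific surface associated to $y$; that pointwise matching is again the valuation correspondence (the linear form $(i,j)\mapsto i\,v_y(L)+j\,v_y(M)$ is minimized along $E$), i.e.\ the content of \cite[Proposition 3.1]{CCGLS}, which also fixes the overall sign without any outward-versus-inward normal bookkeeping. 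So your plan closes up into a complete proof, but its delicate middle portion amounts to re-deriving the valuation identity the paper uses from the start.
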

This theorem sheds some light on the main theorem of \cite{CCGLS}, which states that the boundary slopes of the Culler--Shalen surfaces are boundary slopes of the Newton polygon of the A-polynomial. Indeed it is well-known that the logarithmic Gauss map converges at those ideal points to the value of the slope of the corresponding boundary of the Newton polygon.

To conclude this introduction, we mention that the slope invariant can be extended to orthogonal (real) representations of link groups. In this more general setting, the first twisted homology space $H_1(\partial \cM, \rho)$ can have an arbitrary dimension higher than $2$ and the kernel $\mathcal Z(K,\rho)$ might not be a line anymore. However, the space $H_1(\partial \cM, \rho)$ carries a natural symplectic structure given by the (twisted) intersection form on $\partial \cM$, and $\mathcal Z(K,\rho)$ is still a Lagrangian subspace. A construction of Arnold \cite{Arnold} related to the Maslov index allows to construct a \emph{generalized slope} for this context, lying in $S^1 \subset \CC^*$.  As it turns out, in the case of a representation $\rho\colon \pi_1(M_K) \to \SU(2)$, both theories coincide via the natural isomorphism $\mathbb R \mathbb P^1 \simeq S^1$.
We postpone rigorous definitions and further study of this invariant to an upcoming article.
\subsection*{Organisation of the paper}
In \cref{sec:char} we collect basic definitions on character varieties and A-polynomials. In \cref{sec:slope} we define the slope invariant and we prove \cref{prop:rational} and \cref{prop:collTors}. In \cref{subsec:slopeApol} we prove \cref{prop:slope^2}. %Only the A-polynomial is concerned by \cref{subsec:Detect}, where we prove \cref{theo:detect}. 
Finally, in \cref{subsec:ideal} we prove \cref{theo:slope}.

\subsection*{Aknowledgements}
Léo Bénard is a member of the Research Training Group 2491 "Fourier Analysis and Spectral Theory", University of Göttingen. 
The authors thank Julien Marché, Stepan Yu.\,Orevkov and Joan Porti for related conversations.

%%%%%%%%%%%%%%%%%%
%%%%%%%%%%%%%%%%%%
\section{Representation varieties,  character varieties and A-polynomial.}
\label{sec:char}
%%%%%%%%%%%%%%%%%%
%%%%%%%%%%%%%%%%%%
This section is devoted to definitions and properties of representations spaces and character varieties (\cref{subsec:char}). We compute the character variety of the group $\ZZ^2$ in \cref{caractorus} and define the A-polynomial of knots in \cref{subsec:Apol}.
References for character varieties are \cite{Sha02, Sikora}, the A-polynomial was first defined in \cite{CCGLS}, see also \cite{CL98}.

\subsection{Representation and character varieties}
\label{subsec:char}
Let $\Gamma$ be a finitely generated group. The \emph{representation variety} is the affine algebraic set
\[R(\Gamma)=\text{Hom}(\Gamma, \SL_2(\mathbb{C})).\]
If $\Gamma$ is generated by $n$ elements, the representation variety is an algebraic subset of $\SL_2(\CC)^{n}$ given by polynomial relations corresponding to the relations of the group $\Gamma$. Two different presentations yield naturally isomorphic algebraic sets.

A representation $\rho:\Gamma \rightarrow \SL_2(\mathbb{C})$ is \emph{abelian} is $\rho(\Gamma)$ is an abelian subgroup of $\SL_2(\CC)$.  A representation $\rho$ is \emph{reducible} if there exists a proper subspace of $\mathbb{C}^2$ invariant under the action of $\rho(\Gamma)$.
Equivalently, $\rho(\Gamma)$ is conjugated to a subgroup of the group of upper-triangular matrices in $\SL_2(\CC)$.
Abelian representations are reducible, but the converse does not hold. Non-reducible representations are \emph{irreducible}.

%We will say that
Two representations $\rho$ and $\rho'$ in $R(\Gamma)$ are equivalent if they have the same trace:
\begin{equation*}
	\rho \sim \rho'  \text{ if and only if } \Tr \rho(\gamma) = \Tr \rho'(\gamma) \text{, for any }\gamma \in \Gamma.
\end{equation*}
The set of equivalence classes of representations coincides with the algebro-geometric  quotient of $R(\Gamma)$ by the action of $\SL_2(\CC)$ by conjugation. This quotient is usually constructed through invariant theory, and is denoted
\[X(\Gamma) = R(\Gamma)/ \! / \SL_2(\CC).\]
Points of the character variety are called \emph{characters}. The equivalence class of a representation $\rho$ (the character of $\rho)$ is denoted by $\chi_\rho : \Gamma \rightarrow \CC$ with $\chi_\rho(\gamma)=\Tr(\rho(\gamma))$ for $\gamma \in \Gamma$.
If $\Gamma$ is the fundamental group of a manifold $W$, we simply write $R(W)$ and $X(W)$ for the representation and character varieties of the manifold $W$.

Despite being abelian is not a well-defined notion on the character variety, the notion of being reducible makes sense there, since a reducible representation $\rho \colon \Gamma \to \SL_2(\CC)$ can be characterized by the fact that for any $\gamma, \delta \in \Gamma$, the following equality holds (see for instance \cite[Lemma 1.2.1]{CS83}):
\begin{equation}
	\label{eq:RedTrace}
	\Tr \rho\left(\gamma\delta\gamma^{-1}\delta^{-1}\right)=2.
\end{equation}

The character variety $X(\Gamma)$ can be decomposed as
\[X(\Gamma)= X^{\text{irr}}(\Gamma) \cup X^{\text{red}}(\Gamma),\]
where $X^{\text{red}}(\Gamma)$ is the set of reducible characters, and its complement $X^{\text{irr}}(\Gamma)$ is the set of irreducible characters. \cref{eq:RedTrace} implies that $X^{\text{red}}(\Gamma)$ is a Zariski closed subset of $X(\Gamma)$.

An algebraic set is \emph{reducible} if it can be written as a union of two proper algebraic subset, else it is \emph{irreducible}. An irreducible component of an algebraic set is a maximal irreducible algebraic subset.
\begin{remark}
	Despite $R(\Gamma)$ or $X(\Gamma)$ are called \emph{varieties}, they are not quite algebraic varieties in general: they are actually reducible, and might not be reduced as schemes (some points or subspaces might have multiplicity). On the other hand, any irreducible component is irreducible, and in particular reduced, by definition.
\end{remark}

Two representations $\rho$ and $\rho'$ are \emph{conjugate} if there exists a matrix $M \in \SL_2(\mathbb{C})$ such that $\rho(\gamma)=M \rho'(\gamma) M^{-1}$ for every $\gamma \in \Gamma$. Two conjugate representations define the same character; the converse is false in general, but true for elements of $X^{\text{irr}}(\Gamma)$. More precisely, the following holds.

\begin{proposition} \cite[Proposition 1.5.2]{CS83}
	\label{prop:conju}
	If $\rho$ and $\rho'$ are two representations $\GammaÂ \to \SL_2(\CC)$ with $\rho$ irreducible and $\chi_\rho=\chi_{\rho'}$, then $\rho$ and $\rho'$ are conjugate (and $\rho'$ is irreducible as well).
\end{proposition}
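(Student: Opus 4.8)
The plan is to first transfer irreducibility to $\rho'$, then reduce to the situation $\rho'=\rho$ by adjusting the two representations within their conjugacy classes, and finally use a spanning argument to upgrade agreement on two well-chosen elements to agreement on all of $\Gamma$.

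First I would show $\rho'$ is irreducible. By the characterization recalled in \cref{eq:RedTrace}, irreducibility of $\rho$ provides $\gamma,\delta\in\Gamma$ with $\Tr\rho(\gamma\delta\gamma^{-1}\delta^{-1})\neq 2$. Since $\chi_\rho=\chi_{\rho'}$, the same commutator satisfies $\Tr\rho'(\gamma\delta\gamma^{-1}\delta^{-1})\neq 2$, so $\rho'$ is irreducible as well; this already settles the parenthetical claim. Next, I would exploit that conjugacy of $\rho$ and $\rho'$ is unaffected by conjugating each of them separately, and that such conjugations preserve characters. A standard fact is that an irreducible representation admits an element $g$ with $\Tr\rho(g)\neq\pm 2$ (otherwise the image would consist only of $\pm$unipotents and would be reducible). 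Then $\rho(g)$ is diagonalizable with distinct eigenvalues $\lambda,\lambda^{-1}$; conjugating $\rho$, I assume $\rho(g)=\mathrm{diag}(\lambda,\lambda^{-1})$, and conjugating $\rho'$ (using the eigenvalue-swap element if needed, as $\chi_{\rho'}(g)=\chi_\rho(g)$ forces the same eigenvalues) I assume $\rho'(g)=\rho(g)$.

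By irreducibility of $\rho$ I would then choose $h$ whose image shares no eigenvector with $\rho(g)$, so that both off-diagonal entries of $\rho(h)$ are nonzero, and I normalize one of them using the residual diagonal gauge. Comparing the trace values $\Tr\rho(h)$, $\Tr\rho(gh)$ with their counterparts for $\rho'$, together with the condition $\det=1$ and the inequality $\lambda\neq\lambda^{-1}$, I would solve the resulting linear system: the diagonal entries of $\rho'(h)$ must equal those of $\rho(h)$, and the product of its off-diagonal entries is pinned down as well. A final diagonal conjugation of $\rho'$ then yields $\rho'(h)=\rho(h)$, so that $\rho$ and $\rho'$ now agree on both $g$ and $h$.

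The last step is to propagate this agreement to all of $\Gamma$. Because $\rho(g)$ is diagonal with distinct eigenvalues and $\rho(h)$ has nonzero off-diagonal entries, the four matrices $I,\rho(g),\rho(h),\rho(gh)$ form a basis of the space of $2\times 2$ matrices over $\CC$ (the concrete form of Burnside's theorem here). For an arbitrary $\gamma\in\Gamma$, the coordinates of $\rho(\gamma)$ in this basis are recovered from the trace pairings $\Tr\bigl(\rho(\gamma)\rho(w)\bigr)=\chi_\rho(\gamma w)$ for $w\in\{1,g,h,gh\}$ by inverting the Gram matrix $\bigl(\chi_\rho(uw)\bigr)_{u,w}$, which is invertible since the trace form is nondegenerate and the four matrices are a basis. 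The identical computation for $\rho'$ uses the very same basis (as $\rho'(g)=\rho(g)$ and $\rho'(h)=\rho(h)$) and the very same character values, hence produces the same coordinates; therefore $\rho'(\gamma)=\rho(\gamma)$ for every $\gamma$, and the original representations are conjugate. The main obstacle I anticipate is the normalization step, namely checking that the available gauge freedom is exactly enough to force $\rho'(h)=\rho(h)$ from the trace identities, and in particular justifying the auxiliary fact that an irreducible $\SL_2(\CC)$-representation contains an element of trace $\neq\pm 2$; once two such elements are matched, the spanning and Gram-matrix argument is essentially formal.
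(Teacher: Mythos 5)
Your proposal is correct, but there is no internal proof to compare it against: the paper does not prove this proposition, it simply imports it from Culler--Shalen \cite[Proposition 1.5.2]{CS83}. Your argument is the standard one for this classical fact (and close in spirit to the original source): transfer irreducibility to $\rho'$ via the commutator--trace criterion of \cref{eq:RedTrace}, normalize both representations at an element $g$ with $\Tr\rho(g)\neq\pm 2$ and at an element $h$ whose image has both off-diagonal entries nonzero, then use nondegeneracy of the trace form on the basis $\{I,\rho(g),\rho(h),\rho(gh)\}$ of the $2\times 2$ matrices to propagate agreement to all of $\Gamma$. The two auxiliary facts you flagged do hold, and can be settled with tools already in the paper. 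First, if every element of $\rho(\Gamma)$ had trace $\pm 2$, the trace identity $\Tr(ABA^{-1}B^{-1})=\Tr(A)^2+\Tr(B)^2+\Tr(AB)^2-\Tr(A)\Tr(B)\Tr(AB)-2$ would give every commutator trace the value $12-(\pm 8)-2\in\{2,18\}$; the value $18$ is excluded by the hypothesis itself, so all commutators would have trace $2$ and $\rho$ would be reducible by \cref{eq:RedTrace}, a contradiction. Second, in the eigenbasis of $\rho(g)$, the set of $\gamma$ with $\rho(\gamma)$ upper triangular and the set with $\rho(\gamma)$ lower triangular are both subgroups of $\Gamma$; since a group is never the union of two proper subgroups, if no admissible $h$ existed then $\rho(\Gamma)$ would be entirely triangular, i.e.\ reducible. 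With these filled in, your normalization goes through exactly as predicted: the linear system from $\Tr\rho(h)$ and $\Tr\rho(gh)$ has determinant $\lambda^{-1}-\lambda\neq 0$, forcing equal diagonal entries; $\det=1$ then pins down the product of the off-diagonal entries, the residual diagonal gauge makes $\rho'(h)=\rho(h)$, and the Gram-matrix step is formal since the same basis, the same Gram matrix, and the same character values are used for both representations.
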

Two non-conjugate representations having the same character in $X(\Gamma)$ must be reducible. If $\Gamma$ is a knot group, Burde and De Rham \cite{Bur, dR67} showed that the set of characters containing non-conjugate representations is finite.
%such characters form a finite subset of $X(\cM)$.

\subsection{The character variety of $\ZZ^2$}
\label{caractorus}
We describe explicitly the character variety of a 2-torus $S^1 \times S^1$.
Pick a basis $m,\ell$ of  $\pi_1(S^1 \times S^1) = \ZZ^2$.
Any representation in $\SL_2(\CC)$ is conjugate to a representation $\rho$ given by two commuting matrices of the form
\[\rho(m)= \bma M & * \\ 0 & M^{-1} \ema, \quad \rho(\ell) = \bma L & * \\ 0 & L^{-1} \ema \]
 for  $M,L \in \CC^*$. Each point of the character variety $X(S^1 \times S^1)$ has a pre-image in $R(S^1 \times S^1)$ of the form
\begin{equation}
	\label{equa:repbord}
	\rho(m)= \bma M & 0 \\ 0 & M^{-1} \ema, \quad \rho(\ell) = \bma L & 0 \\ 0 & L^{-1} \ema, \quad M,L \in \CC^*.
\end{equation}
This pre-image is unique up to the involution $\sigma$ of $ (\CC^*)^2$ sending $(L,M)$ to $(L^{-1},M^{-1})$, and $X(S^1 \times S^1)$ can be identified with the singular affine complex surface $(\CC^*)^2/\sigma$. It embedds in $\CC^3$ as the zeros of the polynomial
\[\Delta = x^2+y^2+z^2-xyz-4.\]
Indeed, the function algebra of $X(S^1 \times S^1)$ naturally identifies with  the $\sigma$-invariant sub-algebra $\CC[M+M^{-1}, L+L^{-1}]$ of $\CC[L^{\pm1}, M^{\pm1}]$. This algebra of invariant functions is isomorphic with $\CC[x,y,z]/(\Delta)$ through $M+M^{-1} \mapsto x, L+L^{-1} \mapsto y, ML+(ML)^{-1} \mapsto z$. From this description, one sees that the singular locus of $X(S^1 \times S^1)$ consists on the four points $\{ (L,M) = (\pm1 , \pm 1)\}$.

\subsection{The A-polynomial}
\label{subsec:Apol}

Let $K$ be an oriented knot in $S^3$ with exterior $\cM$. %The Seifert longitude $\ell$ and the meridian $m$ such that $\ell$ and $m$ intersect positively exactly once generate the peripheral subgroup $\pi_1(\partial \cM) \subset\pi_1(\cM)$.
The inclusion $\partial \cM \subset \cM$ induces an injective group homomorphism $\pi_1(\partial \cM) \hookrightarrow \pi_1(\cM)$.
Let $r$ be the restriction map:
\[r \colon X(\cM) \longrightarrow X(\partial \cM) \simeq X(S^1 \times S^1).\]
For short we denote by $\rho_{\partial}= r(\rho)$ the restriction of $\rho$ to $\pi_1(\partial \cM)$.
 %% ajoutÃÂ©
By \cref{caractorus}, the choice of the longitude $\ell$ and the meridian $m$ induces an identification of $X(S^1 \times S^1)$ with a quotient of $(\CC^*)^2$.
The image of  $r$ is a union of points and curves%in $X(\partial \cM)$,
, possibly with multiplicities, see for instance \cite[Lemma 2.1]{Dunfield_Garouf}. Discarding the $0$-dimensional components, the \emph{A-polynomial} of $K$ is the unique polynomial $A_K(L,M)$ in {$\CC[L,M]$ } whose zero locus in $\CC^2$ is exactly mapped onto the image of
% the algebraic map
$r$. %into $X(S^1 \times S^1)$.
Note that  $A_K(L,M)$ is  always divisible by $L-1$. This factor corresponds to the curve of reducible characters. Boyer, Zhang, Dunfield and Garoufalidis have shown the following result.
%The A-polynomial has been extensively studied in the last 30 years. A striking result is the following:
\begin{theorem}[\cite{Boyer_Zhang, Dunfield_Garouf}]
	Let $K$ be a knot in $S^3$. The A-polynomial $A_K(L,M)$ is equal to $(L-1)^k$ for some $k$, if and only if $K$ is the trivial knot (and in this case $k=1$).
\end{theorem}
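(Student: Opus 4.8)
The plan is to prove the two implications separately, folding the multiplicity claim $k=1$ into the easy direction. For the ``if'' direction, suppose $K$ is the unknot. Then $\cM$ is a solid torus, so $\pi_1(\cM)\cong\ZZ$ is generated by the meridian $m$, and the preferred longitude $\ell$ is actually trivial in $\pi_1(\cM)$: it is null-homologous by definition, and since $\pi_1(\cM)$ is abelian it lies in the trivial kernel of $\pi_1(\cM)\to H_1(\cM)$. Consequently every $\rho\colon\pi_1(\cM)\to\SL_2(\CC)$ is abelian with $\rho(\ell)=\Id$, which in the eigenvalue coordinates of \cref{caractorus} forces $L=1$ while $M\in\CC^*$ is free. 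The image of the restriction map $r$ is therefore exactly the line $\{L=1\}$, traversed once, so $A_K=L-1$ and $k=1$.

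For the ``only if'' direction I would argue through Boden's \cref{theo:detect}. If $A_K=(L-1)^k$ for some $k$, then $A_K$ is a polynomial in $L$ alone, so $\deg_M A_K=0$; \cref{theo:detect} then yields at once that $K$ is trivial. This reduces the whole statement to a result already available in the excerpt, and together with the previous paragraph establishes the equivalence and the value $k=1$.

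If instead one wants a proof independent of \cref{theo:detect} --- the route of the cited works \cite{Boyer_Zhang,Dunfield_Garouf} --- the strategy is gauge-theoretic. For nontrivial $K$, Kronheimer--Mrowka's theorem on $\SU(2)$-representations shows that the meridian-trace function $\chi\mapsto\chi(m)$ already takes a whole interval of values on the $\SU(2)$-characters, hence is non-constant on $X^{\mathrm{irr}}(\cM)$. One then selects an irreducible component $X_0$ on which the meridian eigenvalue $M$ is non-constant; such an $X_0$ has $\dim X_0\geq 1$, and since the restriction to a torus boundary has image of dimension at most one (``half lives, half dies''), $r(X_0)\subset X(\partial\cM)$ is a curve along which $M$ varies. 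It remains to check that this curve is \emph{not} the reducible line $\{L=1\}$, and it is precisely here that the content sits.

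The step I expect to be the main obstacle is exactly this last one: ruling out $L\equiv 1$ on $X_0$. Were $L$ identically $1$, every representation in $X_0$ would kill $\ell$ and hence factor through $\pi_1$ of the $0$-surgery $S^3_0(K)$, producing a positive-dimensional family of irreducible characters of a closed $3$-manifold with non-constant meridian trace; excluding this genuinely uses the peripheral structure of the knot group together with the gauge-theoretic input of Kronheimer--Mrowka. Via the shortcut through \cref{theo:detect} this obstacle is bypassed entirely, which is why I would present that argument as the primary proof and relegate the gauge-theoretic route to a remark on the original sources.
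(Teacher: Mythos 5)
The paper offers no proof of this statement at all---it is quoted verbatim from \cite{Boyer_Zhang, Dunfield_Garouf}---so any argument you give is necessarily ``a different route.'' Your first direction is correct and complete: for the unknot, $\cM$ is a solid torus, $\ell$ is null-homologous and $\pi_1(\cM)\cong\ZZ$ is abelian, so $\ell$ dies in $\pi_1(\cM)$, every character restricts into $\{L=1\}$, and $A_K=L-1$ with $k=1$. Your second direction is also formally valid \emph{within this paper}: $A_K=(L-1)^k$ certainly has $\deg_M A_K=0$, so Boden's \cref{theo:detect}, which the paper quotes as known, gives triviality of $K$ at once. This is arguably the natural move here, and it is consistent with how the paper itself treats \cref{theo:detect} as the stronger statement in the introduction.

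Two caveats, one of which is a genuine mathematical error. First, as a proof of the cited theorem \emph{per se}, your reduction is circular at the level of the literature: Boden's $M$-degree theorem is the later, stronger result, proved as a strengthening of Boyer--Zhang/Dunfield--Garoufalidis and resting on their theorem (equivalently, on the same Kronheimer--Mrowka $\SU(2)$ input), so outside this paper's axiomatics the implication you use runs in the wrong direction; you half-acknowledge this by calling it a shortcut, which is the right instinct. Second, your gauge-theoretic sketch is flawed in a way worth naming even though you only offer it as a remark: over $\SL_2(\CC)$, having $L\equiv 1$ on a component does \emph{not} mean every representation kills $\ell$---it only means $\rho(\ell)$ is unipotent, so these representations need not factor through $\pi_1(S^3_0(K))$; the implication ``trace $2$ implies identity'' is special to $\SU(2)$, where elements are semisimple. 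Moreover, even granting the factorization, ``a positive-dimensional family of irreducible characters of a closed $3$-manifold with non-constant trace'' is not by itself a contradiction, since closed $3$-manifolds can perfectly well have positive-dimensional character varieties. The actual Dunfield--Garoufalidis argument is more delicate, exploiting irreducible $\SU(2)$ representations for \emph{all} surgery slopes in $[-2,2]$ to force the image in $X(\partial\cM)$ to be strictly larger than the line $L=1$. So keep the reduction to \cref{theo:detect} as your stated proof, but do not present the sketched alternative as if its only missing step were the one you singled out.
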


%%%%%%%%%%%%%%%%%%
%%%%%%%%%%%%%%%%%%
\section{The Slope invariant}
\label{sec:slope}
%%%%%%%%%%%%%%%%%%
%%%%%%%%%%%%%%%%%%
In this section we define the slope of an admissible representation (\cref{subsec:admi}), and observe that generic $\SL_2(\CC)$-representations are admissible. In \cref{subsec:slopechar} we show that the slope is invariant by conjugation of the representation. We prove in \cref{subsec:reg} that it yields a rational function on irreducible components of the character variety 
and that the slope of a real representation is a real number.
Then we prove in \cref{subsec:tors} that the slope can be written as a quotient of Reidemeister torsions. Finally, in \cref{subsec:Alex} we describe a procedure to compute the slope with an Alexander matrix.

\subsection{Admissible representations}
\label{subsec:admi}
Let $V$ be a finite dimensional $\CC$-vector space, and $\rho \colon \pi_1(\cM) \to \GL(V)$ be a representation. The representation extends to a ring homomorphism and $V$ can be viewed as a right $\ZZ[\pi_1(\cM)]$-module $V_\rho$. The twisted homology $H_*(\cM, \rho)$ is the homology of the complex of $\CC$-vector spaces:
\[C_*(\cM;\rho) = V_\rho \otimes_{\ZZ[\pi_1(\cM)]} C_*(\cM; \ZZ[\pi_1(\cM)]).\]
\begin{definition}
A representation $\rho \colon \pi_1(\cM) \to \GL(V)$  is \emph{admissible} if it satisfies:
\begin{itemize}[label=-]
	\item there exists $v_\rho \in V$
	such that $\{v_\rho \otimes \ell, v_\rho\otimes m\}$ is a basis of $H_1(\partial \cM, V_\rho) \simeq \CC^2$,
	\item the kernel of the homomorphism induced by inclusion:
	\[\mathcal Z(K,\rho) = \ker \left( H_1(\partial \cM, V_\rho) \stackrel{i_*} \longrightarrow H_1(\cM, V_\rho)\right)\]
	has dimension one.
\end{itemize}
\end{definition}

We restrict to representations $\rho \colon \pi_1(\cM) \rightarrow \SL_2(\CC)$. The composition of $\rho$ with the adjoint action $\Ad$ of $\SL_2(\CC)$ on $\mathfrak{sl}_2(\CC)$ induces the following representation:
\[\begin{array}{rccc}
	\Adr : & \pi_1(\cM) & \longrightarrow & \mathrm{Aut}(\mathfrak{sl}_2(\CC)) \\
	& \gamma & \longmapsto &\left(v \mapsto \rho(\gamma)v \rho(\gamma)^{-1} \right).
\end{array}\]

\begin{definition} \label{def:slope}
	Let $\rho \colon \pi_1(\cM) \to \SL_2(\CC)$ be such that $\Adr$ is admissible. Let $a \, (v_\rho \otimes \ell) + b \, (v_\rho \otimes m)$ be a generator of $\mathcal Z(K, \Adr)$ for some $[a:b] \in \CC \mathbb P^1$ .
	The \emph{slope} of the knot $K$ at the representation $\rho$ is
	\[s_K(\rho) =- \frac b a \in \CC \cup \infty. \]
\end{definition}

\begin{definition}
	A representation $\rho \colon \pi_1(\cM) \to \SL_2(\CC)$ is  \emph{boundary-parabolic} if the restriction
	$\rho_\partial: \pi_1(\partial \cM) \to \SL_2(\CC)$
	is parabolic, that is $\Tr \rho(\gamma) = \pm 2$ for any $\gamma \in \pi_1(\partial \cM)$.

	A boundary-parabolic character is the character of a boundary-parabolic representation.
\end{definition}

\begin{lemma}
	\label{lem:dim1}
	Let $\rho\colon \pi_1(\cM) \to \SL_2(\CC)$ be a non-parabolic representation. The vector space $H_1(\partial \cM, \Adr)$ is isomorphic to $\CC^2$, and the kernel $\mathcal Z(M, \Adr)$ is one-dimensional. Moreover, if $\rho$ is not boundary-parabolic, then $\Adr$  is admissible.
\end{lemma}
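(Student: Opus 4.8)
The plan is to reduce everything to the linear algebra of the commuting pair $A = \Adr(m)$ and $B = \Adr(\ell)$ acting on $\mathfrak{sl}_2(\CC) \cong \CC^3$, by computing the twisted homology of the boundary torus $\partial\cM$ directly from its standard CW-structure with one $0$-cell, the two $1$-cells dual to $m,\ell$, and one $2$-cell. The cellular chain complex of $\partial\cM$ with coefficients in $\Adr$ reads
\[
0 \to \mathfrak{sl}_2(\CC) \xrightarrow{\partial_2} \mathfrak{sl}_2(\CC)\oplus\mathfrak{sl}_2(\CC) \xrightarrow{\partial_1} \mathfrak{sl}_2(\CC) \to 0,
\]
where, by Fox calculus applied to the relator $[m,\ell]$, one has $\partial_1(u,v) = (A-\Id)u + (B-\Id)v$ and $\partial_2(w) = (-(B-\Id)w,(A-\Id)w)$. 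Thus $H_2 = \ker\partial_2$ is the common fixed space $\mathfrak{sl}_2(\CC)^{\rho_\partial}$ and $H_0$ its coinvariants. The first key point is that the Killing form is $\Adr$-invariant and nondegenerate, so $\Adr$ is orthogonal and $\mathfrak{sl}_2(\CC)$ is self-dual; hence $\dim H_0 = \dim H_2 =: d$, and since $\chi(\partial\cM)=0$ the Euler characteristic gives $\dim H_1(\partial\cM,\Adr) = 2d$.

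Next I would compute $d = \dim\mathfrak{sl}_2(\CC)^{\rho_\partial}$. After conjugating $\rho_\partial$ into standard form, two cases occur: either $\rho_\partial$ is diagonal, $\rho(m) = \operatorname{diag}(M,M^{-1})$ and $\rho(\ell) = \operatorname{diag}(L,L^{-1})$, so that $A,B$ fix the Cartan line $\langle H\rangle$ and act on the root lines $\langle E\rangle,\langle F\rangle$ by $M^{\pm2},L^{\pm2}$; or $\rho_\partial$ is non-diagonalizable, in which case $A,B$ are regular unipotent on $\mathfrak{sl}_2(\CC)$. A short computation shows that in every non-central case the common fixed space is exactly one-dimensional, spanned by $H$ in the diagonal case and by the nilpotent $E$ in the unipotent case. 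This one-dimensionality is what the non-parabolic hypothesis guarantees, the only excluded degenerate case being the central one (where the fixed space is all of $\mathfrak{sl}_2(\CC)$), and it yields $\dim H_1(\partial\cM,\Adr) = 2$.

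For the kernel $\mathcal Z(K,\Adr)$ I would invoke the \emph{half lives, half dies} principle in its twisted form. Because $\Adr$ preserves the nondegenerate Killing form, the twisted cup-product intersection pairing on the closed oriented surface $\partial\cM$ endows $H_1(\partial\cM,\Adr)$ with a symplectic structure, and Poincar\'e--Lefschetz duality for the oriented pair $(\cM,\partial\cM)$ forces $\mathcal Z(K,\Adr) = \ker i_*$ to be a Lagrangian subspace. As $\dim H_1(\partial\cM,\Adr) = 2$, this gives $\dim\mathcal Z(K,\Adr) = 1$. This argument uses only orientability and nondegeneracy of the form, so it applies uniformly, including to boundary-parabolic representations, which is consistent with the fact that this part of the statement requires only the non-parabolic hypothesis.

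Finally, for admissibility I would take a generator $v_\rho$ of the one-dimensional fixed space; since $(A-\Id)v_\rho = (B-\Id)v_\rho = 0$, both $v_\rho\otimes m = (v_\rho,0)$ and $v_\rho\otimes\ell = (0,v_\rho)$ are cycles, and it remains to decide whether their classes are linearly independent in the two-dimensional $H_1(\partial\cM,\Adr)$. Here the dichotomy of the previous step becomes decisive, and this is the step I expect to be the main obstacle. When $\rho$ is not boundary-parabolic, $\rho_\partial$ is diagonalizable and $v_\rho$ is the semisimple direction $H$, whereas $\im\partial_2$ is concentrated in the root lines $\langle E\rangle\oplus\langle F\rangle$ of each slot; thus $(H,0)$ and $(0,H)$ are complementary to $\im\partial_2$ and descend to a basis, so $\Adr$ is admissible. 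By contrast, in the boundary-parabolic case $v_\rho$ is the nilpotent $E$, and the explicit form of $\partial_2$ produces a relation $[v_\rho\otimes\ell] = \lambda\,[v_\rho\otimes m]$ in $H_1$ for a scalar $\lambda$, so the two classes are proportional and admissibility genuinely fails; this is precisely why the extra hypothesis is needed.
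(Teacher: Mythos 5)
Your proof is correct and follows essentially the same route as the paper: the cellular chain complex of the torus, a dimension count giving $\dim H_1(\partial \cM, \Adr)=2$, Poincar\'e duality (half lives, half dies) to get $\dim \mathcal Z(K,\Adr)=1$, and the diagonalizable-versus-unipotent dichotomy for the basis/admissibility claim --- in fact you supply the details that the paper compresses into ``an explicit computation shows.'' The only step you leave implicit is why non-parabolicity of $\rho$ excludes the central case for $\rho_\partial$: as the paper notes, $\pi_1(\cM)$ is generated by conjugates of the meridian, so $\rho(m)=\pm I_2$ would force $\rho(\pi_1(\cM))\subset\{\pm I_2\}$, making $\rho$ parabolic.
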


\begin{proof}
	The group $\pi_1(\cM)$ is generated by 
	pairwise conjugate meridians. 
	If $\rho$ is non-parabolic, then the image of a meridian must differ to $\pm I_2$, otherwise we would have $\rho(\pi_1(\cM)) \subset \{ \pm I_2 \}$.

	Consider the complex $C_*(\partial \cM, \Adr)$ with one $0$-cell, two $1$-cells corresponding to $\ell$ and $m$ and one $2$-cell.
	An explicit computation of the homology of shows that the dimension of $H_1(\partial \cM, \Adr)$ is two. Moreover, when $\rho$ is not boundary-parabolic, for $v_\rho \in \mathfrak{sl}_2(\CC)$  invariant by $\Adr_\partial$, the pair of vectors
	$(v_\rho \otimes \ell,v_\rho \otimes m)$ forms a basis of $H_1(\partial \cM, \Adr) $.
	Since the Killing form on the local system $\mathfrak {sl}_2(\CC)$ yields a Poincar\'e duality isomorphism $H_*(\cM, \Adr) \simeq H_{3-*}(\cM, \partial \cM, \Adr)$, the following diagram is commutative
	\[\begin{tikzcd}
		H_2(M, \partial \cM, \Adr) \rar["`d"] \dar & H_1(\partial \cM, \Adr) \rar["i_*"] \dar & H_1(\cM, \Adr) \dar \\
		H_1(\cM,\Adr)^\ast \rar["i^*"] & H_1(\partial \cM, \Adr)^\ast \rar["`d^*"] & H_2(\cM, \partial \cM, \Adr)^\ast
	\end{tikzcd}\]
	with exact rows and where the vertical arrows are isomorphisms. Exactness implies
	\[\dim \ker i_* = \operatorname{rank} \delta=\operatorname{rank} i^*=\operatorname{rank} i_*\]
	since the diagram commutes and transposition preserves the rank. Hence $\dim \mathcal Z(K, \Adr)=\dim \ker i_* =  (1/2) \dim H_1(\partial M, \Adr).$
\end{proof}

As an example, we compute the slope for abelian non boundary-parabolic representations.
Let $\varphi \colon \pi_1(\cM) \to H_1(\cM) = \ZZ$ be the abelianization.
For any $\lambda \in \CC^*$, there is an abelian representation
\begin{align}
	\label{abel}
	\rho_\lambda \colon  \pi_1(\cM) & \longrightarrow  \SL_2(\CC)\\
	 \gamma & \longmapsto  \bsm \lambda^{\varphi(\gamma)} & 0 \\ 0 & \lambda^{-\varphi(\gamma)}\esm \nonumber
\end{align}
and any abelian, non boundary-parabolic representation is conjugate to a representation of this form.

\begin{lemma}
	\label{lem:abel}
	For any $\lambda \neq \pm 1$, the slope at the abelian representation $\rho_\lambda$ vanishes:
	\[s_K(\rho_\lambda)=0.\]
\end{lemma}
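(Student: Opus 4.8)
The plan is to make the representation completely explicit and then observe that the computation of the kernel $\mathcal{Z}(K,\Adr_\lambda)$ collapses to an \emph{untwisted} homology computation. First I would record the two standard facts about the peripheral structure: the preferred longitude is null-homologous while the meridian generates $H_1(\cM;\ZZ)=\ZZ$, so $\varphi(\ell)=0$ and $\varphi(m)=1$. With the normalisation of \cref{abel} this yields
\[\rho_\lambda(m)=\bsm \lambda & 0 \\ 0 & \lambda^{-1}\esm, \qquad \rho_\lambda(\ell)=\bsm 1 & 0 \\ 0 & 1 \esm = I_2.\]
Since $\lambda\neq\pm1$ we have $\Tr\rho_\lambda(m)=\lambda+\lambda^{-1}\neq\pm2$, so $\rho_\lambda$ is non-parabolic and not boundary-parabolic; by \cref{lem:dim1} the representation $\Adr_\lambda$ is admissible, $H_1(\partial\cM,\Adr_\lambda)\simeq\CC^2$ and $\mathcal{Z}(K,\Adr_\lambda)$ is a line, so the slope is well defined.

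Next I would diagonalise the adjoint action in the standard basis $H=\bsm1&0\\0&-1\esm$, $E=\bsm0&1\\0&0\esm$, $F=\bsm0&0\\1&0\esm$ of $\mathfrak{sl}_2(\CC)$. A direct computation gives $\Adr_\lambda(m)\cdot H=H$, $\Adr_\lambda(m)\cdot E=\lambda^2 E$, $\Adr_\lambda(m)\cdot F=\lambda^{-2}F$, while $\Adr_\lambda(\ell)$ acts as the identity. Because $\rho_\lambda$ factors through the abelianisation, \emph{every} element $\gamma$ acts diagonally in this basis (with weights $0, 2\varphi(\gamma), -2\varphi(\gamma)$), so $\mathfrak{sl}_2(\CC)$ splits as a direct sum $\CC_0\oplus\CC_2\oplus\CC_{-2}$ of rank-one $\ZZ[\pi_1(\cM)]$-modules (indexed by the weight of $m$). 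This splitting is therefore respected by the inclusion-induced map $i_*$. The weight-zero summand $\CC_0=\CC H$ is the trivial local system, and since $\lambda^2\neq1$ it is exactly the line of $\Adr_\lambda$-invariant vectors on the boundary, which forces $v_{\rho_\lambda}$ of \cref{lem:dim1} to be a multiple of $H$.

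Finally I would evaluate $i_*$ on the generators $H\otimes\ell$ and $H\otimes m$. On the torus the two nontrivial summands $\CC_{\pm2}$ have vanishing homology, because a rank-one local system on $T^2$ with nontrivial monodromy (here $m$ acts by $\lambda^{\pm2}\neq1$) has trivial twisted homology, as one checks on the standard cell structure via Fox calculus. Hence all of $H_1(\partial\cM,\Adr_\lambda)$ lies in the trivial summand, where $i_*$ is just the untwisted map $H_1(\partial\cM;\CC)\to H_1(\cM;\CC)$ tensored with $H$. As $[\ell]=0$ and $[m]$ generates $H_1(\cM;\CC)=\CC$, we obtain $i_*(H\otimes\ell)=0$ and $i_*(H\otimes m)\neq0$. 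Thus $\mathcal{Z}(K,\Adr_\lambda)$ is spanned by $H\otimes\ell=v_{\rho_\lambda}\otimes\ell$, i.e. $a=1$ and $b=0$ in \cref{def:slope}, so $s_K(\rho_\lambda)=-b/a=0$. The one point demanding care is this reduction to the trivial summand: I would justify both the decomposition of the twisted chain complex along weight spaces (valid precisely because $\rho_\lambda$ is abelian) and the vanishing of the twisted homology of $T^2$ in the nontrivial weights, which is what pins the invariant vector to $H$ and collapses the statement to the elementary fact that $[\ell]=0\neq[m]$ in $H_1(\cM;\CC)$.
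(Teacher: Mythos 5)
Your proof is correct and follows essentially the same route as the paper: decompose $\mathfrak{sl}_2(\CC)$ into weight spaces $\CC_{\lambda^2}\oplus\CC\oplus\CC_{\lambda^{-2}}$ as a $\ZZ[\pi_1(\cM)]$-module, note that the nontrivial weights contribute nothing to $H_1(\partial\cM,\Adr)$ since $\lambda\neq\pm1$, and reduce $i_*$ to the untwisted map $H_1(\partial\cM;\CC)\to H_1(\cM;\CC)$, whose kernel is generated by $\ell$. Your version merely makes explicit some steps the paper leaves implicit (the identification $v_{\rho_\lambda}=H$, and the vanishing of twisted homology of $T^2$ in the nontrivial weights).
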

\begin{proof}
	Up to conjugation, the representation $\Adr$ has the form
	\[\Adr(\gamma) =  \bsm \lambda^{2\varphi(\gamma)} &0 &0\\0& 1 &0 \\0 &0 &\lambda^{-2\varphi(\gamma)}\esm\]
	and $\mathfrak{sl}_2(\CC)$ splits as $\ZZ[\pi_1(\cM)]$-module as
	\[\mathfrak{sl}_2(\CC) = \CC_{\lambda^2} \oplus \CC \oplus \CC_{\lambda^{-2}}.\]
	This yields a splitting in twisted homology (with abelian coefficients), for $U= \partial \cM$ or $U=\cM$:
	\[H_1(U, \Adr) = H_1\left(U,  \CC_{\lambda^2}\right) \oplus  H_1(U,\CC) \oplus H_1\left(U,  \CC_{\lambda^{-2}}\right)\]
	Since $\lambda\neq \pm 1$, for $U=\partial \cM$ the only non-trivial summand is $H_1(\partial \cM, \CC)$, and the map $H_1(\partial \cM, \Adr) \xrightarrow{i_*} H_1(\cM, \Adr)$ coincides with the map induced by the inclusion in homology with trivial coefficients $H_1(\partial \cM, \CC) \to H_1(\cM, \CC)$, whose kernel is generated by~$\ell$.
\end{proof}

\subsection{The slope of characters}
\label{subsec:slopechar}
By the following lemma, the slope does not depend on the conjugacy class of an irreducible representation. 
Combined with \cref{prop:conju}, it follows that the slope of an irreducible representation depends only on its character.

\begin{lemma}
	\label{lem:conj}
	Let $\rho$ and $\rho' \colon \pi_1(\cM) \to \SL_2(\CC)$ be two irreducible, non boundary-parabolic representations. If $\rho$ and $\rho'$ are conjugate, then $s_K(\rho)= s_K(\rho')$.
\end{lemma}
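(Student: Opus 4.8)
The plan is to upgrade the conjugacy $\rho'=g\rho g^{-1}$, $g\in\SL_2(\CC)$, into an isomorphism of the entire homological package defining the slope, using functoriality of twisted homology in the coefficient system. Applying the homomorphism $\Ad$ gives
\[
\Ad\circ\rho' \;=\; \Ad(g)\circ(\Adr)\circ\Ad(g)^{-1},
\]
so the two local systems on $\pi_1(\cM)$ differ by the single linear automorphism $\Phi_0:=\Ad(g)$ of $\mathfrak{sl}_2(\CC)$. Note also that $\rho'$ is non boundary-parabolic, since traces are conjugation invariant, so the slope $s_K(\rho')$ is defined.

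First I would record the relevant functoriality. With the right $\ZZ[\pi_1(\cM)]$-module convention $v\cdot\gamma=\Adr(\gamma)^{-1}v$, the map $\Phi_0$ is $\ZZ[\pi_1(\cM)]$-linear from $\mathfrak{sl}_2(\CC)_{\Adr}$ to $\mathfrak{sl}_2(\CC)_{\Ad\circ\rho'}$: indeed $\Phi_0(\Adr(\gamma)^{-1}v)=(\Ad\circ\rho')(\gamma)^{-1}\Phi_0(v)$ by the displayed relation. Tensoring $\Phi_0$ over $\ZZ[\pi_1(\cM)]$ with the cellular chain complex of the universal cover, for $U=\partial\cM$ and for $U=\cM$, produces a chain map that acts only on the coefficient factor; it therefore commutes both with the differentials and with the chain map induced by the inclusion $\partial\cM\hookrightarrow\cM$. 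Passing to homology yields isomorphisms $\Phi\colon H_*(U,\Adr)\to H_*(U,\Ad\circ\rho')$ fitting into a commutative square with the two inclusion-induced maps $i_*$, and hence a restriction to an isomorphism $\Phi\colon\mathcal Z(K,\Adr)\xrightarrow{\ \sim\ }\mathcal Z(K,\Ad\circ\rho')$.

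Next I would compute $\Phi$ on the distinguished bases. On chains $\Phi(v\otimes c)=\Phi_0(v)\otimes c$, so $\Phi(v_\rho\otimes\ell)=(\Ad(g)v_\rho)\otimes\ell$ and likewise for $m$. Setting $v_{\rho'}:=\Ad(g)v_\rho$, the displayed relation gives, for $\gamma\in\pi_1(\partial\cM)$,
\[
(\Ad\circ\rho')(\gamma)\,v_{\rho'} = \Ad(g)\,\Adr(\gamma)\,v_\rho = \Ad(g)v_\rho = v_{\rho'},
\]
so $v_{\rho'}$ is $(\Ad\circ\rho')_\partial$-invariant and, by \cref{lem:dim1}, the pair $(v_{\rho'}\otimes\ell,\,v_{\rho'}\otimes m)$ is a basis of $H_1(\partial\cM,\Ad\circ\rho')$ of the type used to define $s_K(\rho')$. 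Since rescaling the invariant vector rescales the whole basis and leaves the projective coordinates unchanged, this particular choice of $v_{\rho'}$ is harmless.

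Finally, if $a\,(v_\rho\otimes\ell)+b\,(v_\rho\otimes m)$ generates $\mathcal Z(K,\Adr)$, then $\Phi$ carries it to $a\,(v_{\rho'}\otimes\ell)+b\,(v_{\rho'}\otimes m)$, a generator of $\mathcal Z(K,\Ad\circ\rho')$ with the same ratio $[a:b]$; therefore $s_K(\rho')=-b/a=s_K(\rho)$. I expect the only real obstacle to be the bookkeeping of the module conventions needed to see that $\Phi_0$ is a morphism of coefficient systems, together with the naturality that makes $\Phi$ commute with $i_*$; once these are settled, the matching of bases, and hence of slopes, is immediate. Note that irreducibility plays no role in this argument itself — it enters only afterwards, in tandem with \cref{prop:conju}, to deduce that the slope descends to a well-defined function on the character.
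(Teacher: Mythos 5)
Your proof is correct and takes essentially the same route as the paper's: there the conjugating matrix $A$ is used to produce the invariant vector $v'_\rho = A v_\rho A^{-1}$ and an isomorphism $H_1(\partial \cM, \Adr) \to H_1(\partial \cM, \Ad\circ\rho')$ carrying the distinguished basis to the distinguished basis and $\mathcal Z(K,\Adr)$ to $\mathcal Z(K,\Ad\circ\rho')$, which is exactly your map $\Phi$. Your version merely makes explicit the module-convention and naturality bookkeeping that the paper leaves implicit, and your closing remark about irreducibility entering only via the descent to characters is consistent with how the paper uses the lemma.
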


\begin{proof}
	Let $A$ be a matrix in $\GL_2(\CC)$ such that $\rho'= A\, \rho \, A^{-1}$. Any $\Adr$-invariant vector $v_\rhoÂ \in \mathfrak{sl}_2(\CC)$ yields an $\Adr'$-invariant vector $v'_\rho = A\,v_\rho\,A^{-1}$, and the conjugation by $A$ induces an isomorphism
	\[H_1(\partial \cM, \Adr) \to H_1(\partial \cM, \Adr')\] sending the basis $\{v_\rho \otimes \ell, v_\rho\otimes m\}$ to $\{v'_\rho `(`*) \ell, v'_\rho `(`*) m\}$ and the subspace $\mathcal Z(K, \Adr)$ to $\mathcal Z(K, \Adr')$. Hence
	$s_K(\rho)= s_K(\rho')$.
\end{proof}

\begin{remark}
	\label{remk:abel} 
	There exist pairs of reducible, non-conjugate representations with the same character. Indeed, let $\chi$ be an arbitrary reducible character in $X(\cM)$. Consider a representation $\rho$ of the form  $\bsm \lambda(\gamma) & * \\ 0 & \lambda^{-1}(\gamma)\esm$, where $\lambda \colon \pi_1(\cM) \to \CC^*$ is a group homomorphism, choosen such that $\chi(\rho)=\chi$.
	Note that $\lambda$ can further be written $\lambda(\gamma) = \lambda^{\varphi(\gamma)}$ for some $\lambda \in \CC^*$ and $\varphi: \pi_1(\cM) \to \ZZ$.
	Hence the abelian representation $\rho_\lambda$ defined in \cref{abel} has also character $\chi$, but is not conjugated in general to $\rho$.
	It turns out that they can have different slope values.

	For example, consider the right-handed trefoil knot $T$ in $S^3$. The character variety $X(M_T)$ is the union of a line $X^{\text{red}}$ and a conic $X^{\text{irr}}$ in the plane. The line contains only reducible characters, and any character in the conic is irreducible except the two intersection points $X^{\text{red}} \cap X^{\text{irr}}$. Let $\chi$ be a point in $X^{\text{red}} \cap X^{\text{irr}}$. Since $\chi$ is reducible, there exist $\lambda \in \CC^*$ such that the abelian representation $\rho_\lambda$ has character $\chi$. By \cref{lem:abel}, one has $s_T(\rho_\lambda) = 0$. However, we show in \cref{ex:trefoil} that the slope defines a constant function on $X^{\text{irr}}$, everywhere equal to $-6$.
	%This illustrates that \cref{lem:conj} holds only if the two representations are irreducible.
\end{remark}
\subsection{Regularity and properties of the slope}
\label{subsec:reg}

We extend the slope to a rational function --locally a quotient of polynomials-- on the character variety $X(\cM)$. %A rational function on an algebraic variety is a function.

\medbreak

%reformulÃÂ©
There is a component $X^{\text{red}}\subset X(\cM)$ %consisting only
of reducible characters only. By \cref{remk:abel} any character in $X^{\text{red}}$ is the character of an abelian representation. Hence the slope is identically zero on $X^{\text{red}}$, see \cref{lem:abel}.
Suppose now that $X\subset X(M)$ is an irreducible component containing an irreducible character.

%% ChangÃÂ© la formulation
\begin{proposition}
	\label{cor:slExt}
	Let $X\subset X(M)$ be an irreducible component which contains an irreducible  character. The slope extends to a rational function on $X$, still denoted $s_K$.
		Moreover, if  $\chi \in X$ is a boundary-parabolic character
	then
	\begin{equation}
		\label{slopemodule}
		s_K(\chi) = \tau(\chi),
	\end{equation}
	where the modulus $\tau(\chi) \in \CC$
	is defined by taking the representative $\rho$ of $\chi$ satisfying
	\begin{equation*}
		\rho(m)=  \bma \pm1&1 \\0 & \pm1 \ema, \quad \rho(\ell) =   \bma \pm 1& \tau(\chi) \\0 &\pm 1 \ema.
	\end{equation*}
\end{proposition}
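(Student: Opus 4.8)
The plan is to prove the two assertions in turn: first that $s_K$ is the restriction of a rational function on $X$, and then that this rational function takes the value $\tau(\chi)$ at a boundary-parabolic character. Throughout I work on the Zariski-dense open subset $U\subset X$ consisting of characters of irreducible representations whose restriction to $\pi_1(\partial\cM)$ is non-parabolic; by \cref{lem:dim1} every such representation is admissible, so $s_K$ is defined on $U$, and by \cref{lem:conj,prop:conju} it is a genuine conjugation-invariant function of the character there.

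For rationality I would exhibit $s_K$ on $U$ as, locally, a ratio of regular functions. Fix a presentation of $\pi_1(\cM)$ and the associated CW models of $\cM$ and $\partial\cM$; the twisted boundary operators of $C_*(\cM,\Adr)$ and $C_*(\partial\cM,\Adr)$ have entries obtained by evaluating Fox derivatives through $\Adr$, hence are regular functions on $R(\cM)$. The inclusion-induced map $i_*$ on $H_1$, written in the basis $\{v_\rho\otimes\ell,\,v_\rho\otimes m\}$ supplied by \cref{lem:dim1}, is then represented by a matrix with regular entries, and the line $\mathcal Z(K,\Adr)=\ker i_*$ is cut out by the vanishing of its maximal minors. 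The pair $[a:b]$, and hence $s_K=-b/a$, is therefore a ratio of such minors, i.e.\ a rational map $R(\cM)\to\CC\mathbb P^1$ on the saturated open set lying over $U$. Being conjugation invariant by \cref{lem:conj}, it descends to a rational function on the quotient $X(\cM)$; since a function regular on a dense open subset of an irreducible variety is by definition rational, this proves the first claim.

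For the value at a boundary-parabolic character I would argue by degeneration. Write $\mathfrak v_\rho\colon H_1(\partial\cM;\CC)\to H_1(\partial\cM,\Adr)$ for the map $x\mapsto v_\rho\otimes x$, well defined because $\CC v_\rho$ is a trivial sub-local-system. For $\rho$ representing a character in $U$ this map is an isomorphism and $\mathcal Z(K,\Adr)=\mathfrak v_\rho(L_\rho)$ for a line $L_\rho\subset H_1(\partial\cM;\CC)$ whose slope is exactly $s_K(\rho)$. Now let $\rho$ be a boundary-parabolic representative of $\chi$, normalised as in the statement with $v_\rho=\bsm 0&1\\0&0\esm$. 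A direct computation in $C_*(\partial\cM,\Adr)$, using $\Ad$ of the unipotent matrices $\rho(m),\rho(\ell)$ on $\mathfrak{sl}_2(\CC)$, yields in $H_1(\partial\cM,\Adr)$ the single relation $v_\rho\otimes\ell=\tau(\chi)\,(v_\rho\otimes m)$, so $\mathfrak v_\rho$ drops to rank one with kernel the line $\langle\ell-\tau(\chi)m\rangle$, whose slope is precisely $\tau(\chi)$. It remains to identify $\lim_{\rho_t\to\rho}L_{\rho_t}$ with this kernel along a path $\rho_t$ of representatives with $\chi_{\rho_t}\in U$ approaching $\chi$ inside $X$: writing $\mathfrak v_{\rho_t}$ as a matrix $V_t$ degenerating to the rank-one $V_0=\mathfrak v_\rho$, the preimage $L_{\rho_t}=V_t^{-1}(\mathcal Z_{\rho_t})$ is spanned by $V_t^{-1}w_t$, and since $V_t^{-1}=(\det V_t)^{-1}\operatorname{adj}(V_t)$ with $\operatorname{adj}(V_0)$ having image exactly $\ker V_0$, the direction of $V_t^{-1}w_t$ converges to $\ker V_0=\ker\mathfrak v_\rho$. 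Hence $L_{\rho_t}\to\langle\ell-\tau(\chi)m\rangle$ and $s_K(\chi_{\rho_t})\to\tau(\chi)$; as $s_K$ is rational, $s_K(\chi)=\tau(\chi)$.

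The main obstacle is the non-degeneracy required in the last step, namely that $\operatorname{adj}(V_0)w_0\neq0$, equivalently that the limiting kernel $\mathcal Z_0=\ker i_*$ is distinct from the rank-one image $\im\mathfrak v_\rho=\langle v_\rho\otimes m\rangle$, i.e.\ that $i_*(v_\rho\otimes m)\neq0$ in $H_1(\cM,\Adr)$. When this fails the rational function $s_K$ acquires a pole at $\chi$ (the slope tends to $\infty$, matching a degenerate longitude), so the identity $s_K(\chi)=\tau(\chi)$ must be established on the boundary-parabolic locus where $i_*(v_\rho\otimes m)\neq0$; controlling this vanishing, together with checking that boundary-parabolic characters of $X$ are genuinely limits of characters in $U$ and that $\mathcal Z_{\rho_t}$ converges, is where the real work lies. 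The explicit $\mathfrak{sl}_2(\CC)$-computation producing the relation $v_\rho\otimes\ell=\tau(\chi)\,(v_\rho\otimes m)$ is routine by comparison.
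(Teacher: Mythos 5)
Your first half --- rationality of $s_K$ on the locus $U$ of irreducible, non boundary-parabolic characters --- is sound and is essentially the paper's own argument: the paper phrases it (\cref{lem:rational}) as an algebraic $\CC\mathbb P^1$-fibration trivialized by the normalized invariant vector $v_\rho$, and records your Fox-calculus matrix description separately in \cref{prop:slcomp}. The first genuine gap is structural: your standing assumption that $U$ is Zariski dense in $X$ fails exactly when \emph{every} character of $X$ is boundary-parabolic (equivalently $r(X)$ is a point), a case permitted by the hypotheses. Then $U=\emptyset$, there is no rational function to extend, and your entire scheme is vacuous. The paper's proof of \cref{prop:cusp} treats this case first and separately: on such a component the slope is \emph{defined} to be $\tau$, and $\chi\mapsto\tau(\chi)$ is itself rational because representatives with $\rho(m)=\bsm \pm1 & 1\\ 0 & \pm1\esm$ can be chosen algebraically.

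The second gap is in the degeneration argument, and you concede it yourself: the non-degeneracy $i_*(v_{\rho_0}\otimes m)\neq 0$, the convergence of $\mathcal Z(K,\Adr_t)$, and the approximability of $\chi_0$ by characters of $U$ are declared to be ``where the real work lies,'' which is an admission that the proof is not finished. Worse, your fallback --- that when $i_*(v_{\rho_0}\otimes m)=0$ the function $s_K$ merely acquires a pole, so the identity need only be proved elsewhere --- is both unjustified and insufficient: since the frame $(v_\rho\otimes\ell, v_\rho\otimes m)$ itself degenerates at $\chi_0$, the limit of the coordinates $[a:b]$ is \emph{not} determined by the limit of the lines $\mathcal Z(K,\Adr_t)$ (one can arrange $a_t\to\infty$ or $a_t$ bounded with different limiting ratios), so no conclusion about poles follows; and the proposition asserts $s_K(\chi)=\tau(\chi)$ at every boundary-parabolic $\chi\in X$, so one cannot retreat to a sub-locus. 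The paper's mechanism avoids your adjugate/kernel-convergence machinery entirely: by \cref{lem:bp} and \cref{lem:dim1}, $\ker i_*$ is one-dimensional at every $\rho$ near $\rho_0$ \emph{including} $\rho_0$, hence $i_*$ has rank one there, and the slope equals the ratio of $i_*(v_\rho\otimes\ell)$ to $i_*(v_\rho\otimes m)$ inside the line $i_*(H_1(\partial \cM,\Adr))$; this expression still makes sense at $\rho_0$ itself, where the relation $v_{\rho_0}\otimes\ell=\tau(\chi_0)\,v_{\rho_0}\otimes m$ (your computation, and the paper's) evaluates it to $\tau(\chi_0)$ with no limiting process over kernels needed. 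In short: your key homological identity is the right one, but the surrounding limit argument is incomplete where the paper's ratio-of-images formulation closes (or at least isolates to a single nonvanishing statement) precisely the points you leave open, and the all-boundary-parabolic case is missing altogether.
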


%\begin{proposition}
%	\label{cor:slExt}
%	Let $X\subset X(M)$ be an irreducible component which contains an irreducible character. The slope is a rational function on $X$
%	%, still denoted $s_K$.
%		%Moreover, if  $\chi \in X$ is a boundary-parabolic character
%	%then
%	%\begin{equation*}
%	%\label{slopemodule}
%	%s_K(\chi) = \tau(\chi),
%	%\end{equation*}
%%	where the modulus $\tau(\chi) \in \CC$
%%	is defined by taking the representative $\rho$ of $\chi$ satisfying
%%\begin{equation*}
%%\rho(m)=  \bma \pm1&1 \\0 & \pm1 \ema, \quad \rho(\ell) =   \bma \pm 1& \tau(\chi) \\0 &\pm 1 \ema.
%%\end{equation*}
%\end{proposition}

\begin{remark}
	If $\chi$ is the character of an irreducible representation and lies at the intersection of several {irreducible} components, then the value of the slope at $\chi$ is well-defined.
\end{remark}
The rest of the section is devoted to the proof of \cref{cor:slExt}.
\cref{lem:rational} asserts that the slope is a rational function in the neighborhood of any irreducible, non boundary-parabolic character.
For boundary parabolic characters $\chi$, we \emph{define} the slope by the relation in \cref{slopemodule} and we show that the result is still a rational function on $X$ in \cref{prop:cusp}.

\begin{lemma}
	\label{lem:rational}
	Let $\chi_0$ an irreducible, non-boundary-parabolic character in $X$. The slope is a rational function in a neighborhood of $\chi_{0}$ in $X$.
\end{lemma}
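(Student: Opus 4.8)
The plan is to show that on the representation variety, near a lift $\rho_0$ of $\chi_0$, the slope is a rational function of the matrix entries of $\rho$, and then to descend this to $X$ using conjugation invariance. Since $\chi_0$ is irreducible, \cref{prop:conju} shows that the fibre over an irreducible character of the component $\tilde X \subset R(\cM)$ dominating $X$ is a single $\operatorname{PGL}_2(\CC)$-orbit, so $\CC(X)$ is the field of $\operatorname{PGL}_2(\CC)$-invariant rational functions on $\tilde X$; by \cref{lem:conj} the slope is such an invariant function on the dense open set of irreducible, non-boundary-parabolic representations. It therefore suffices to produce a rational expression for $s_K(\rho)$ in the entries of $\rho$.

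First I would make the invariant vector algebraic. Near $\rho_0$ the meridian satisfies $\rho(m) \neq \pm I_2$, as in the proof of \cref{lem:dim1}, so its traceless part
\[
	v_\rho = \rho(m) - \tfrac12 \Tr\big(\rho(m)\big)\, I_2 \in \mathfrak{sl}_2(\CC)
\]
is a nonzero vector depending polynomially on $\rho$; since $m$ and $\ell$ commute, $v_\rho$ commutes with $\rho(\ell)$ as well, hence is invariant under $\Adr_\partial$ and provides the class used in \cref{lem:dim1}. Fixing a CW model (or a presentation together with Fox calculus) for the pair $(\cM,\partial\cM)$, the twisted boundary operators of $C_*(\partial\cM,\Adr)$ and $C_*(\cM,\Adr)$ are matrices whose entries are polynomials in the entries of the matrices $\Adr(\gamma)$, hence polynomials in $\rho$. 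In particular the cycles $c_\ell = i_\#(v_\rho\otimes\ell)$ and $c_m = i_\#(v_\rho\otimes m)$ in $C_1(\cM,\Adr)$ have polynomial entries in $\rho$.

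Next I would read off the kernel line rationally. By \cref{lem:dim1} the subspace $\mathcal Z(K,\Adr) = \{[a:b] : a\,c_\ell + b\,c_m \in \im \partial_2^{\cM}\}$ is one-dimensional for all $\rho$ near $\rho_0$; equivalently the augmented matrix $[\,\partial_2^{\cM} \mid c_\ell \mid c_m\,]$ has rank exactly one more than $\partial_2^{\cM}$. On the Zariski-dense open set where $\partial_2^{\cM}$ attains its generic rank, Cramer's rule expresses the one-dimensional solution space $[a:b]$ of this linear system as a ratio of maximal minors of the polynomial matrix above, so the pair $[a:b]$, and hence $s_K(\rho) = -b/a$, is a rational function of $\rho$. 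This function agrees with the slope wherever the latter is defined, in particular on a neighborhood of $\rho_0$, so by the descent of the first paragraph $s_K$ is rational in a neighborhood of $\chi_0$ in $X$.

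The main obstacle is the possible jump of $\dim H_1(\cM,\Adr)$ --- equivalently of the rank of $\partial_2^{\cM}$ --- at special characters, which would prevent a single Cramer formula from being valid identically. This is exactly what \cref{lem:dim1} controls: it pins down $\dim\mathcal Z(K,\Adr)=1$ throughout the neighborhood, so although the rank of $\partial_2^{\cM}$ may drop on a proper closed subset, the ratio $[a:b]$ extends across it as a rational function, which is all that is required. A secondary point to verify is the legitimacy of the descent, namely that a $\operatorname{PGL}_2(\CC)$-invariant rational function on $\tilde X$ is a rational function on the quotient $X$; this follows from the fact that the quotient map is a geometric quotient over the irreducible locus, where $\operatorname{PGL}_2(\CC)$ acts with trivial stabilizers.
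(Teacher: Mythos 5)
Your strategy---descend conjugation-invariant rational functions from $R(\cM)$ to $X$, produce the invariant vector polynomially, and read off the kernel line $\mathcal Z(K,\Adr)$ by linear algebra on a chain complex with entries polynomial in $\rho$---is legitimate, and it is in fact closer in spirit to the paper's later Fox-calculus computation (\cref{prop:slcomp}) than to the paper's own proof of this lemma, which instead packages $H_1(\partial\cM,\Adr)$ into an algebraic $\CC\mathbb P^1$-fibration over a neighborhood of $\chi_0$ and trivializes it by a normalized invariant vector. Your choice $v_\rho = \rho(m)-\tfrac12\Tr(\rho(m))I_2$ is a clean touch (genuinely polynomial, unlike a norm-one normalization, and the slope is insensitive to the scaling). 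The descent paragraph and the construction of the cycles $c_\ell, c_m$ are fine. The problem is the last step.

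The claim that constancy of $\dim\mathcal Z(K,\Adr)$ forces the Cramer-rule ratio to ``extend across'' the locus where the rank of $\partial_2$ drops, and to agree there with the actual kernel line, is false as a linear-algebra principle. Take $C_2=\CC^2 \to C_1=\CC^3$ with $\partial_2(t)$ having columns $e_1$ and $t\,e_2$, and cycles $c_\ell(t)=e_2+t\,e_3$, $c_m(t)=-e_2+t\,e_3$. For \emph{every} $t$ the space of $[a:b]$ with $a\,c_\ell+b\,c_m\in\im\partial_2(t)$ is one-dimensional (so the analogue of \cref{lem:dim1} holds identically), yet this line is $[1:-1]$ for all $t\neq 0$ and jumps to $[1:1]$ at $t=0$: the rational function obtained by Cramer's rule on the generic-rank locus does \emph{not} compute the kernel at the special point, where all the relevant maximal minors vanish identically and the kernel is cut out by different, smaller minors. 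This is exactly the case the lemma must cover: nothing in its hypotheses prevents $\chi_0$ itself from being a character where $\dim H_1(\cM,\Adr)$ jumps (the paper later emphasizes that these results hold without scheme-smoothness and on high-dimensional components, where such jumps do occur). So your argument proves rationality of $s_K$ only on the open set where the rank of $\partial_2$ is locally constant; at jump points you still owe an argument that the pointwise-defined kernel line varies algebraically---equivalently, that $\mathcal Z(K,\Adr)$ is an algebraic section of the family, which is precisely the assertion the paper makes (without a Cramer-type derivation) in its fibration formulation, and which cannot be replaced by the statement $\dim\mathcal Z(K,\Adr)=1$ alone.
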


\begin{proof}
	Let $\rho_0$ in $R(\cM)$ be a representation with character $\chi_0$. 
	%Since $\chi_0$ is non-boundary parabolic, 
	By \cref{lem:dim1} one has $H_1(\partial \cM, \Adr) \simeq \CC^2$. The set of complex lines
	\[\mathbb P(\rho) = \mathbb P(H_1(\partial \cM, \Adr))\]
	is a complex algebraic variety isomorphic to $\CC P^1$. If $\rho$ and $\rho'$ are conjugate, then there is a natural algebraic isomorphism $\mathbb P(\rho) \simeq \mathbb P(\rho')$. It defines an algebraic $\CC\mathbb P^1$-fibration on a neighborhood of $\chi_{\rho_0}$, and  for any $\chi$, the complex line $\mathcal Z(\cM, \Adr)$ is an algebraic section of this fibration, independent of the choice of representation $\rho$ with character $\chi$.

	It remains to show that the identification $\mathbb P(\rho) \simeq \CC\mathbb P^1$ is algebraic, in other words, that the choice of the basis $(v_\rho \otimes \ell, v_\rho \otimes m)$ depends algebraically on $\rho$.
	Since $\rho_0$ is not boundary-parabolic, we can shrink the chosen neighborhood so that no representation $\rho$ near $\rho_0$ is boundary-parabolic. Then, since $\rho_\partial$ is conjugated to a diagonal representation, there is a unique $\Adr_\partial$-invariant vector $v_\rho$ with norm $1$  in $\mathfrak{sl}_2(\CC)$.
	This choice depends polynomially on the entries of the matrix $\Adr(m)$, and then the basis $(v_\rho \otimes \ell, v_\rho \otimes m)$ depends algebraically on $\rho$.
\end{proof}

We now consider the case of boundary-parabolic characters.
\begin{lemma}
	\label{lem:bp}
	Let $\rho_0$ be a boundary-parabolic representation whose character $\chi_{\rho_0}$ lies in $X$. Then $\rho_0$ is irreducible, in particular $\rho_0(m)\neq \pm I_2$.
\end{lemma}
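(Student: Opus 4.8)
The plan is to argue by contradiction: I assume $\rho_0$ is reducible and contradict the standing hypothesis that $X$ is an irreducible component carrying an irreducible character. First I record what boundary-parabolicity costs. Since $m \in \pi_1(\partial\cM)$, the parabolicity of $\rho_{0,\partial}$ gives $\Tr\rho_0(m)=\pm2$. If $\rho_0$ were reducible, its character would agree with that of a diagonal representation $\rho_\lambda$ as in \cref{abel}, so that $\Tr\rho_0(m)=\lambda+\lambda^{-1}=\pm2$; this forces $\lambda=\pm1$, hence $\lambda^2=1$.

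Next I locate $\chi_{\rho_0}$ inside $X$. The reducible locus $X^{\text{red}}(\cM)$ is Zariski closed by \cref{eq:RedTrace}, so $X\cap X^{\text{irr}}$ is a non-empty Zariski-open subset of the irreducible set $X$, hence dense; in particular $X$ has positive dimension and the proper closed subset $X\cap X^{\text{red}}$ is nowhere dense. Consequently the reducible character $\chi_{\rho_0}\in X\cap X^{\text{red}}$ lies in the closure of $X\cap X^{\text{irr}}$, i.e.\ it is a limit of irreducible characters supported on $X$.

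The key input is then the deformation theory of reducible representations. By the results of Burde and de Rham, refined by Heusener, Porti and Su\'arez Peir\'o, a reducible character $\chi_{\rho_\lambda}$ can be a limit of irreducible characters only if $\lambda^2$ is a root of the Alexander polynomial $\Delta_K$. This matches the adjoint picture of \cref{lem:abel}: the genuinely non-abelian deformation directions lie in the summand $\CC_{\lambda^2}$, and $H_1(\cM,\CC_{\lambda^2})$ is non-trivial precisely when $\Delta_K(\lambda^2)=0$. Since $\Delta_K$ is normalised so that $\Delta_K(1)=\pm1\neq0$, the value $\lambda^2=1$ is never a root, contradicting $\lambda^2=1$ from the first step. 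Therefore $\rho_0$ is irreducible. For the final clause, the group $\pi_1(\cM)$ is normally generated by the meridian $m$ and conjugates of $\pm I_2$ are $\pm I_2$, so $\rho_0(m)=\pm I_2$ would give $\rho_0(\pi_1(\cM))\subseteq\{\pm I_2\}$, making $\rho_0$ abelian and hence reducible; this is the same observation already used in the proof of \cref{lem:dim1}, and it contradicts irreducibility, so $\rho_0(m)\neq\pm I_2$.

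I expect the main obstacle to be the necessity direction invoked in the third paragraph, namely that a reducible character lying on a component with irreducible characters must sit over a root of the Alexander polynomial, together with pinning down the correct convention, that the relevant quantity is $\lambda^2$ and that $\Tr\rho_0(m)=\pm2 \Leftrightarrow \lambda=\pm1 \Leftrightarrow \lambda^2=1$. One must also be sure to use the normalisation $\Delta_K(1)=\pm1\neq0$ rather than a weaker statement. If one prefers a more self-contained argument, the necessity can instead be extracted directly from the Zariski tangent space at $\rho_\lambda$: when $\lambda=\pm1$ the adjoint action is trivial and every cocycle factors through $H_1(\cM)=\ZZ$, so all first-order deformations are abelian, which already forbids $\chi_{\rho_0}$ from being an accumulation point of irreducible characters.
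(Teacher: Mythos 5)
Your main argument is correct and is essentially the paper's own proof: the paper likewise deduces the lemma from the Burde--de Rham statement that a reducible representation whose character lies on a component $X$ containing irreducible characters must have meridian eigenvalues $\lambda^{\pm1}$ with $\Delta_{M_K}(\lambda^2)=0$, hence $\lambda\neq\pm1$ and $\rho_0$ cannot be both reducible and boundary-parabolic, and it concludes $\rho_0(m)\neq\pm I_2$ exactly as you do, from the fact that meridians generate $\pi_1(\cM)$. The only caveat is your closing ``self-contained'' variant, which is not rigorous as stated: first-order deformations at the central representation alone do not exclude $\chi_{\rho_0}$ being an accumulation point of irreducible characters (a limiting representation in $R(\cM)$ may be a non-central parabolic abelian one, and one needs higher-order obstructions or a dimension count against $\dim Z^1(\pi_1(\cM),\Adr)$), so the citation of the Burde--de Rham/Heusener--Porti--Su\'arez result should remain the backbone of the proof, just as it is in the paper.
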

\begin{proof}
	For $\rho$ reducible in $X$, \cite{Bur, dR67} implies that $\rho(m)$ has eigenvalues $\lambda, \lambda^{-1}$ in $\CC$, whose square is a root of the Alexander polynomial $\Delta_{M_K}(t)$, in particular $\lambda\neq \pm 1$, and $\rho$ is not boundary-parabolic. Now for irreducible $\rho$, the image of any meridian must be different of  $I_2$, since meridians generate the group $\pi_1(\cM)$.
\end{proof}

\begin{lemma}
	\label{prop:cusp}
	Let $X\subset X(M)$ be an irreducible component containing an irreducible character, and $\chi_0 \in X$ a boundary-parabolic character.
		Then the slope function $s_K$ is rational in a neighborhood of $\chi_0$.
\end{lemma}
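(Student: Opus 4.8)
The plan is to repair the single point where the proof of \cref{lem:rational} broke down at boundary-parabolic characters, namely the choice of the invariant vector $v_\rho$, and then to identify the resulting value at $\chi_0$ with the modulus $\tau(\chi_0)$. In \cref{lem:rational} one took $v_\rho$ to be the norm-$1$ invariant vector obtained from the diagonalization of $\rho_\partial$; this picks the semisimple (Cartan) direction, which does \emph{not} converge to the nilpotent invariant direction of a parabolic, so it is discontinuous across the parabolic locus. Instead I would use the \emph{polynomial} invariant vector
\[
v_\rho \;=\; \rho(m)-\tfrac12\Tr\!\big(\rho(m)\big)\,I_2 .
\]
This is traceless, it commutes with $\rho(m)$ and hence (as $\pi_1(\partial\cM)=\ZZ^2$ is abelian) with $\rho(\ell)$, so it lies in the common invariant space of $\Adr_\partial$; by \cref{lem:bp} we have $\rho_0(m)\neq\pm I_2$, so $v_\rho$ is nonzero near $\chi_0$ and spans the one-dimensional invariant line. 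Writing $\rho_\partial$ in upper-triangular form $\rho(m)=\bsm M&s\\0&M^{-1}\esm$ one sees that as $M\to\pm1$ the vector $v_\rho$ limits to the nilpotent $s\,\bsm0&1\\0&0\esm$; thus $v_\rho$ is an everywhere-nonvanishing invariant vector depending algebraically on $\rho$ \emph{through} the parabolic locus (recall $\Adr$ is insensitive to the central sign, so the trace $\pm2$ causes no trouble).

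With this $v_\rho$ the argument of \cref{lem:rational} applies with no exclusion. Near the irreducible character $\chi_0$ I would fix a local algebraic section $\chi\mapsto\rho_\chi$ of $R(\cM)\to X(\cM)$ (standard via the slice for the $\mathrm{PSL}_2(\CC)$-action at an irreducible character, and consistent on overlaps by \cref{prop:conju}); then $v_\rho$ and the cycles $v_\rho\otimes\ell,\,v_\rho\otimes m$ depend polynomially on $\rho$. As in \cref{lem:rational}, $\mathbb P\big(H_1(\partial\cM,\Adr)\big)$ is an algebraic $\CC\mathbb P^1$-bundle over a neighborhood of $\chi_0$ in $X$, and $\chi\mapsto\mathcal Z(K,\Adr)$ is an algebraic section of it, now trivialized by $(v_\rho\otimes\ell,v_\rho\otimes m)$ across $\chi_0$. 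Since the slope $-b/a$ of a generator $a\,(v_\rho\otimes\ell)+b\,(v_\rho\otimes m)$ of $\mathcal Z(K,\Adr)$ is unchanged when $v_\rho$ is rescaled, the output depends only on the line $\langle v_\rho\rangle$ and is conjugation-invariant, so it descends to a rational function on $X$ near $\chi_0$.

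The step I expect to be the main obstacle is verifying admissibility \emph{at} the parabolic character and matching the value with $\tau(\chi_0)$, because there the local system $\Adr_\partial$ is unipotent rather than semisimple and the homology of $T^2$ is computed differently. For admissibility I would work directly on the CW complex of $\partial\cM=T^2$ with one $0$-cell, the $1$-cells $m,\ell$, and one $2$-cell: since $\Adr_\partial$ is unipotent with a one-dimensional common invariant space (spanned by the nilpotent $v_{\rho_0}$), one has $\dim\mathfrak{sl}_2(\CC)^{\ZZ^2}=\dim\mathfrak{sl}_2(\CC)_{\ZZ^2}=1$, whence by the Euler-characteristic and Poincar\'e-duality argument of \cref{lem:dim1} one gets $\dim H_1(\partial\cM,\Adr)=2$; the classes $v_{\rho_0}\otimes m$ and $v_{\rho_0}\otimes\ell$ are cycles (as $v_{\rho_0}$ is invariant) and are independent modulo the image of the $2$-cell differential, hence form a basis, so $\Adr_{\rho_0}$ is admissible. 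Finally, evaluating the generator of $\mathcal Z(K,\Adr)$ at $\chi_0$, where $\rho(m)=\bsm\pm1&1\\0&\pm1\esm$ and $\rho(\ell)=\bsm\pm1&\tau(\chi_0)\\0&\pm1\esm$ and $v_{\rho_0}$ is the nilpotent $\bsm0&1\\0&0\esm$, the longitudinal contribution is $\tau(\chi_0)$ times the meridional one, so $-b/a=\tau(\chi_0)$. This reconciles \cref{def:slope} with the defining relation \cref{slopemodule} and shows that the rational function produced above extends across $\chi_0$ with the prescribed value, completing the proof.
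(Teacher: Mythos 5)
Your choice of invariant section is fine---$v_\rho=\rho(m)-\tfrac12\Tr(\rho(m))I_2$ is, up to scale, exactly the family the paper itself uses (it equals $\tfrac12\bsm M-M^{-1} & 2\\ 0 & M^{-1}-M\esm$ in the normalization of the paper), and it does pass algebraically through the parabolic locus. The fatal step is your admissibility claim at $\chi_0$: it is \emph{false} that $v_{\rho_0}\otimes\ell$ and $v_{\rho_0}\otimes m$ are independent modulo $\operatorname{im}\partial_2$ at a boundary-parabolic representation. Take $\rho_0(m)=\bsm \pm1&1\\0&\pm1\esm$, $\rho_0(\ell)=\bsm \pm1&\tau(\chi_0)\\0&\pm1\esm$, and set $e=\bsm 0&1\\0&0\esm$, $h=\bsm 1&0\\0&-1\esm$, so that $v_{\rho_0}$ is proportional to $e$. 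Fox calculus for the relator $[m,\ell]$ gives, on the single generator of $C_2(\partial\cM,\Adr_0)\simeq\mathfrak{sl}_2(\CC)$, a boundary of the form $\partial_2 w = w\bigl(1-\Adr_0(\ell)\bigr)\otimes m + w\bigl(\Adr_0(m)-1\bigr)\otimes\ell$; applying this to the Cartan element $w=h$, and using $\bigl(\Adr_0(m)-1\bigr)h=\mp2e$, $\bigl(1-\Adr_0(\ell)\bigr)h=\pm2\tau(\chi_0)e$, one finds that $v_{\rho_0}\otimes\ell-\tau(\chi_0)\,v_{\rho_0}\otimes m$ lies in $\operatorname{im}\partial_2$. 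So the two classes are \emph{proportional} in $H_1(\partial\cM,\Adr_0)$, admissibility fails precisely on the parabolic locus, \cref{def:slope} does not apply there, and your trivialization of the $\CC\mathbb P^1$-bundle ``across $\chi_0$'' collapses. (In the non-parabolic case the invariant line is the Cartan direction while $\operatorname{im}\partial_2$ lies in the root directions, which is why the basis claim holds there; in the unipotent case the invariant line is nilpotent and collides with $\operatorname{im}\partial_2$. The paper states this failure explicitly and builds its proof around it.) Note also that your last step is internally inconsistent: the relation ``longitudinal contribution $=\tau(\chi_0)\times$ meridional one'' that you use to extract $-b/a=\tau(\chi_0)$ is exactly the boundary relation above, i.e.\ a relation holding already in $H_1(\partial\cM,\Adr_0)$, which contradicts your independence claim; and if the two classes really were independent, the generator of $\mathcal Z(K,\Adr_0)=\ker i_*$ would depend on the knot exterior and could not be identified by a computation on $\partial\cM$ alone.

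The paper's proof accepts this degeneration and works around it: $\mathcal Z(K,\Adr_0)$ is still one-dimensional by \cref{lem:dim1}, so $i_*$ has rank one at $\rho_0$ as well as nearby; for $\rho$ near $\rho_0$ the slope equals the ratio of $i_*(v_\rho\otimes\ell)$ and $i_*(v_\rho\otimes m)$ inside the line $i_*\bigl(H_1(\partial\cM,\Adr)\bigr)$, and this ratio still makes sense at $\rho_0$ itself, where the boundary relation forces it to equal $\tau(\chi_0)$. Rationality then follows from \cref{lem:rational} on the (Zariski open, non-empty) non-boundary-parabolic locus together with analytic continuation, the limit at $\chi_0$ being $\tau(\chi_0)$ as required by \cref{slopemodule}. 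Finally, there is a case you do not treat and which your strategy cannot treat, since it relies on admissibility at parabolic characters: a component $X$ consisting entirely of boundary-parabolic characters, where \cref{lem:rational} provides no open set to continue from; there the paper simply observes that $\chi\mapsto\tau(\chi)$ is itself a rational function on $X$.
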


\begin{proof}
	Suppose first that $X$ contains only boundary-parabolic characters. Any $\chi \in X$ is the character of a representation $\rho$ such that $\rho(m) = \bsm \pm 1 & 1 \\ 0 & \pm 1 \esm$. Hence $\chi \mapsto \tau(\chi)$ is rational.

	Now we assume that $X$ contains a non boundary-parabolic character. By definition, boundary-parabolic characters form a Zariski closed subset of $X$. By \cref{lem:rational}, the slope function is rational on the open, non-empty subset of $X$ consisting of non boundary-parabolic characters. By analytic continuation, it is enough to show that
	\[\lim_{\chi \to \chi_0}Â s_K(\chi) = \tau(\chi_0).\]

	By \cref{lem:bp} any boundary-parabolic representation $\rho_0$ with character $\chi_0$ is irreducible. Moreover, since $\rho_0(m)$ can not be trivial, we can chose such a $\rho_0$ satifying
	\[\rho_0(m) = \bsm \pm 1 & 1 \\ 0 & \pm 1\esm.\]
	For any $\chi$ close to $\chi_0$, we chose similarly a representation $\rho$ with character $\chi$ such that
	\[\rho(m) = \bsm M & 1 \\ 0 & M^{-1} \esm,\] 
	with $M$ close to $\pm 1$ in $\CC^*$.
		For such $\rho$, let $v_\rho = \bsm M-M^{-1} & 2 \\ 0 & M^{-1} - M\esm $ be an $\Adr_{\partial}$-invariant vector. The limit at $\rho_0$ of $v_\rho$ is the $(\Adr_0)_\partial$-invariant vector $v_{\rho_0} = \bsm 0 & 2 \\ 0 & 0 \esm$. However, a direct computation shows that $v_{\rho_0} \otimes \ell$ and $v_{\rho_0} \otimes m$ are linearly dependent in $H_1(\partial \cM, \Adr_0)$, and we cannot compute the slope of the boundary parabolic representation $\rho_0$ by means of \cref{def:slope}. Nevertheless, the subspace $\mathcal Z(K,\Adr_0)$ is one-dimensional by \cref{lem:dim1}.

	It implies that the map $i_* \colon H_1(\partial \cM, \Adr) \to H_1(\cM, \Adr)$ has rank one at any representation $\rho$ near $\rho_0$, and at $\rho_0$ as well. In particular, for any $\rho$ near $\rho_0$, the slope can be computed as
	{the ratio of $i_*(v_\rho \otimes \ell)$ and $i_*( v_\rho \otimes m)$ in $i_*(H_1(\partial \cM, \Adr))$. This actually makes sense for $\rho = \rho_0$ as well.
	An explicit computation of the boundary operator $\partial_1 \colon C_2(\partial \cM, \Adr_0) \to C_1(\partial \cM, \Adr_0)$ shows that the vector $ v_{\rho_0} \otimes \ell - \tau(\chi_0) \, v_{\rho_0} \otimes m$ belongs to $\operatorname{im} \partial_2$, and the equality
	\[v_{\rho_0} \otimes \ell  =\tau(\chi_0)\,  v_{\rho_0} \otimes m\]
	holds 
	in $H_1(\partial \cM, \Adr_0)$. This implies that the ratio of $i_*(v_{\rho_0} \otimes \ell )$ and $i_*(v_{\rho_0} \otimes m )$ coincides with the modulus $\tau(\chi_0)$. This proves the lemma, and achieves the proof of \cref{cor:slExt}.
	}
\end{proof}

We end up this section with the following observation.
\begin{proposition}
\label{prop:real}
	Let $X\subset X(M)$ be an irreducible component which contains a non boundary-parabolic representation.
	If $\rho \in X$ is a \emph{real}  representation $\rho \colon \pi_1(\cM) \to \SL_2(\RR)$ or $\rho \colon \pi_1(\cM) \to \SU(2)$, then the slope is a real number in $\RR\mathbb P^1$.
\end{proposition}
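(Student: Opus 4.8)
The plan is to exploit a real structure on the adjoint local system coming from the hypothesis on $\rho$. For $\rho\colon\pi_1(\cM)\to\SL_2(\RR)$, the real subspace $\mathfrak{sl}_2(\RR)\subset\mathfrak{sl}_2(\CC)$ of trace-zero real matrices is preserved by $\Adr$, since conjugation by a real matrix sends real matrices to real matrices; for $\rho\colon\pi_1(\cM)\to\SU(2)$, the real subspace $\mathfrak{su}(2)\subset\mathfrak{sl}_2(\CC)$ of trace-zero anti-Hermitian matrices is preserved by $\Adr$, since conjugation by a unitary matrix preserves anti-Hermiticity. In both cases I obtain an $\Adr$-invariant real form $\mathfrak{g}_\RR$ with $\mathfrak{g}_\RR\otimes_\RR\CC=\mathfrak{sl}_2(\CC)$, hence a real local system $(\mathfrak{g}_\RR)_\rho$ whose complexification is $\Adr$. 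Consequently the twisted chain complex $C_*(\,\cdot\,,\Adr)$ is the complexification of the real complex $C_*(\,\cdot\,,(\mathfrak{g}_\RR)_\rho)$, so every twisted homology group, and in particular the inclusion map $i_*$, is the complexification of the corresponding real datum. Since $\dim_\CC H_1(\partial\cM,\Adr)=2$ by \cref{lem:dim1}, the real group $H_1(\partial\cM,(\mathfrak{g}_\RR)_\rho)$ is $2$-dimensional over $\RR$.

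Next I would check that the invariant vector $v_\rho$ may be chosen in $\mathfrak{g}_\RR$. As $\rho$ is non boundary-parabolic, $\rho_\partial$ is generated by elements different from $\pm I_2$, and I must exhibit an $\Adr_\partial$-fixed vector lying in the real form. In the hyperbolic $\SL_2(\RR)$ case $\rho(m)$ is $\RR$-diagonalizable and fixes the real diagonal direction; in the elliptic $\SL_2(\RR)$ case $\rho(m)$ is a rotation and fixes its own (real) infinitesimal generator in $\mathfrak{sl}_2(\RR)$; in the $\SU(2)$ case $\rho(m)$ is conjugate in $\SU(2)$ to a diagonal element and fixes a real direction of $\mathfrak{su}(2)$. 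In each case a real representative $v_\rho\in\mathfrak{g}_\RR$ exists, so the classes $v_\rho\otimes\ell$ and $v_\rho\otimes m$ lie in the real subspace $H_1(\partial\cM,(\mathfrak{g}_\RR)_\rho)$ and, being a $\CC$-basis of $H_1(\partial\cM,\Adr)$, form an $\RR$-basis of $H_1(\partial\cM,(\mathfrak{g}_\RR)_\rho)$. Since the value $-b/a$ depends only on the line $\mathcal Z(K,\Adr)$ and not on the scaling of $v_\rho$, there is no loss in dropping the unit-norm normalization of \cref{lem:rational}.

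Finally I would conclude by descent of the kernel. Because $i_*$ is the complexification of the real map $i_*^{\RR}\colon H_1(\partial\cM,(\mathfrak{g}_\RR)_\rho)\to H_1(\cM,(\mathfrak{g}_\RR)_\rho)$, and taking kernels commutes with complexification, the line $\mathcal Z(K,\Adr)=\ker i_*$ equals $(\ker i_*^{\RR})\otimes_\RR\CC$; hence it is spanned by a real vector $a\,(v_\rho\otimes\ell)+b\,(v_\rho\otimes m)$ with $a,b\in\RR$ in the $\RR$-basis above. Therefore $s_K(\rho)=-b/a\in\RR\mathbb P^1$. For a real boundary-parabolic character the conclusion follows more directly: the normalized representative of \cref{cor:slExt} has real entries, so the modulus $\tau(\chi)$ is real; alternatively, $s_K$ is a rational function on $X$ by \cref{cor:slExt} that takes values in the closed set $\RR\mathbb P^1$ on the dense locus of non boundary-parabolic real characters, and therefore takes values in $\RR\mathbb P^1$ throughout.

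The main obstacle I anticipate is the second step: confirming uniformly across the hyperbolic, elliptic and unitary cases that the $\Adr_\partial$-invariant vector can be taken in the real form. The elliptic $\SL_2(\RR)$ situation is the delicate one, since there $\rho_\partial$ is not $\RR$-diagonalizable, its unit-norm invariant vector is generally \emph{not} real, and one must instead identify the invariant \emph{line} with the real span of the rotation generator. One must also verify carefully that the real twisted complex is genuinely a real form of the complex one --- that is, that the $\ZZ[\pi_1(\cM)]$-module structure and the boundary maps descend to $\mathfrak{g}_\RR$ --- which is precisely what legitimizes the kernel-descent step in the final paragraph.
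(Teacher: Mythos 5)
Your proposal is correct and takes essentially the same route as the paper: the paper's (much terser) proof observes that the Lagrangian $\mathcal Z(K,\Adr)\subset H_1(\partial \cM,\Adr)$ is the complexification of the real Lagrangian $\mathcal Z(K,\Adr_\RR)$ in $H_1(\partial \cM, \Adr_{\RR})$, where $\Adr_\RR$ is the action on $\mathfrak{sl}_2(\RR)$ (resp.\ $\mathfrak{su}(2)$), and handles the boundary-parabolic case via the reality of the modulus $\tau$, exactly as you do. Your write-up merely fills in the details the paper declares obvious --- invariance of the real form, existence of a real $\Adr_\partial$-invariant vector (including the elliptic case, where the unit-norm vector is not real but the slope is scaling-independent), and compatibility of kernels with complexification.
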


\begin{proof}
	First assume that $\rho$ is non-boundary-parabolic. If $\rho$ is real, denoting by $\Adr_{\RR}$ the action of $\rho$ on the Lie algebra $\mathfrak{sl}_2(\RR)$ (resp. $\mathfrak{su}(2)$) of $\SL_2(\RR)$ (resp. $\SU(2)$), then obviously the Lagrangian $\mathcal Z(\cM, \Adr) \subset H_1(\partial \cM, \Adr)$ is the complexification of the real Lagrangian $\mathcal Z(\cM, \Adr_\RR)$ in the real symplectic vector space $H_1(\partial \cM, \Adr_\RR)$ and the slope of this real Lagrangian is the slope of its complexification, a real number. If $\rho$ is boundary-parabolic and re	al, then it takes value into $\SL_2(\RR)$ and the proposition follows from the definition of the modulus $\tau$. \end{proof}

\subsection{Slope and Reidemeister torsion}
\label{subsec:tors}
In this section we show that the slope coincides with the "change of curve term" for the Reidemeister torsion as stated in \cref{prop:collTors}.

\medbreak

If $\rho$ is an irreducible representation in $X(\cM)$, we consider the torsion of the complex $C_*(\cM, \Adr)$ defined in  \cref{subsec:admi}. This complex is naturally based from a cell  decomposition of $\cM$ and a choice of a basis of $\mathfrak{sl}_2(\CC)$, but not acyclic. The Reidemeister torsion is usually defined for acyclic complexes. In the case we are considering, one needs to make some additional choices to define it, namely a basis of each homology group $H_*(\cM, \Adr)$.

According to \cite{Porti97}, one can still define the Reidemeister torsion of the cellular complex $C_*(\cM, \Adr)$ for representations $\rho$ in $R(\cM)$ such that $H_1(\cM, \Adr)$ has dimension~$1$.  For a given curve $\gamma \in \pi_1(\partial \cM)$, the representation $\rho$ is \emph{$\gamma$-regular} if there exists a vector $v_\rho \in \mathfrak{sl}_2(\CC)$ such that $v_\rho \otimes \gamma$ spans $H_1(\cM, \Adr)$.
In this case, since there is a natural choice of a basis of $H_2(\cM, \Adr)$, the curve $\gamma$ determines a homology basis of the complex $C_*(\cM, \Adr)$ and the torsion $\mathbb{T}_{\cM, \gamma}(\Adr) \in \CC^*$ is defined. Note that this torsion depends only on the conjugacy class of $\rho$, as well as the property of being $\gamma$-regular.

Let $X \subset X(\cM)$ the component containing $\chi$, the torsion function {is the rational function}
\[\mathbb T_{\cM, \gamma} \colon X \to \CC\]
defined as the Reidemeister torsion of the complex $C_*(\cM, \Ad)$ if $\chi$ is $\gamma$-regular, and by $T_{\cM, \gamma} (\chi) = 0$ %else.
otherwise.

We start with the following lemma, which 
provides the genuine setting to define the Reidemeister torsion.

\begin{lemma}
	\label{lem:dim}
	If $X$ has dimension one and contains the character of a scheme-smooth representation $\rho$, then $\dim H_1(\cM, \Adr)=1$.
\end{lemma}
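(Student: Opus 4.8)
The plan is to recast the homological statement as a deformation-theoretic dimension count on the representation variety, the relevant representation $\rho$ being irreducible (this is the case under consideration throughout the section, and $X$ contains irreducible characters). First I would reduce to a cohomological statement: since the knot exterior $\cM$ is aspherical, $H_*(\cM,\Adr)$ coincides with the group homology of $\pi_1(\cM)$, and since $\mathfrak{sl}_2(\CC)$ is self-dual via the non-degenerate $\Ad$-invariant Killing form --- exactly as exploited in the proof of \cref{lem:dim1} --- the universal coefficient theorem over $\CC$, together with this self-duality, gives $\dim H_1(\cM,\Adr)=\dim H^1(\cM,\Adr)$. It therefore suffices to prove $\dim H^1(\cM,\Adr)=1$.

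The key input is Weil's identification of the Zariski tangent space to $R(\cM)=\operatorname{Hom}(\pi_1(\cM),\SL_2(\CC))$ at $\rho$ with the space of cocycles $Z^1(\pi_1(\cM),\Adr)$. By definition, $\rho$ being scheme-smooth means precisely that $R(\cM)$ is smooth at $\rho$, that is $\dim_\rho R(\cM)=\dim Z^1(\pi_1(\cM),\Adr)$. Because $\rho$ is irreducible, the centralizer of $\rho(\pi_1(\cM))$ in $\mathfrak{sl}_2(\CC)$ is trivial, so $H^0(\cM,\Adr)=0$ and the space of coboundaries has dimension $\dim B^1=\dim\mathfrak{sl}_2(\CC)-\dim H^0(\cM,\Adr)=3$. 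Consequently $\dim Z^1=\dim H^1(\cM,\Adr)+3$.

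Next I would pass to the character variety. For irreducible $\rho$ the stabilizer under conjugation is the finite center, so its orbit has dimension $\dim B^1=3$, and on the irreducible locus $R(\cM)\to X(\cM)$ is a geometric quotient by these orbits; moreover, by \cref{prop:conju} every representation with character $\chi_\rho$ is conjugate to $\rho$, so the fibre over $\chi_\rho$ is this single orbit and $\chi_\rho$ lies on the unique component $X$ that is the image of the smooth, hence locally irreducible, component of $R(\cM)$ through $\rho$. Hence the local dimension of $X$ at $\chi_\rho$ equals $\dim_\rho R(\cM)-3=\dim Z^1-3=\dim H^1(\cM,\Adr)$. Since $\dim X=1$ by hypothesis, $\dim H^1(\cM,\Adr)=1$, and the reduction of the first paragraph yields $\dim H_1(\cM,\Adr)=1$. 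The main obstacle is exactly the quotient bookkeeping in this last step --- justifying that scheme-smoothness of $R(\cM)$ at $\rho$ descends to the expected local dimension of $X$ at $\chi_\rho$, with no spurious components passing through $\chi_\rho$. Irreducibility is what makes this work, since it simultaneously forces $H^0(\cM,\Adr)=0$, a free $\operatorname{PGL}_2(\CC)$-action with three-dimensional orbits, and, via \cref{prop:conju}, a single orbit above $\chi_\rho$.
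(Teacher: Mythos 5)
Your proof is correct, and it is in essence a self-contained unpacking of the argument the paper gets by citation. The paper's own proof is two sentences: it invokes \cite[Theorem 1]{Sikora} to identify $H^1(\cM,\Adr)$ directly with the Zariski tangent space of $X(\cM)$ at $\chi_\rho$, and then uses scheme-smoothness to say this Zariski tangent space is the honest tangent space of the one-dimensional $X$. You instead run the identification one level up, on $R(\cM)$: Weil's theorem $T_\rho R(\cM)\simeq Z^1(\pi_1(\cM),\Adr)$, Schur's lemma to get $H^0=0$ and $\dim B^1=3$, and then the descent $\dim_{\chi_\rho}X(\cM)=\dim_\rho R(\cM)-3$ through the geometric quotient on the irreducible locus, using \cref{prop:conju} to see that the fibre over $\chi_\rho$ is a single three-dimensional orbit and that smoothness (hence local irreducibility) of $R(\cM)$ at $\rho$ rules out extra components of $X(\cM)$ through $\chi_\rho$. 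What your route buys is precisely the two points the paper leaves implicit: first, the statement concerns $H_1$ while the tangent-space isomorphism concerns $H^1$, and you make the bridge explicit via Killing-form self-duality of $\mathfrak{sl}_2(\CC)$ (the same duality used in the proof of \cref{lem:dim1}); second, you justify why scheme-smoothness of the \emph{representation} scheme at $\rho$ controls the local dimension of the \emph{character} variety at $\chi_\rho$, i.e.\ the quotient bookkeeping, which is exactly the content of the cited theorem. The paper's version is shorter; yours is self-contained and makes visible that irreducibility of $\rho$ (implicit in both the paper's citation and your Schur/orbit arguments, but stated in neither version of the lemma) is a genuine hypothesis.
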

\begin{proof}
	The proof of \cref{lem:dim} follows from the isomorphism between $H^1(\cM, \Adr)$ and the Zariski tangent space of $X(\cM)$ at $\rho$, see  \cite[Theorem 1]{Sikora} . Scheme-smoothness implies that the Zariski tangent space is the actual tangent space, which is one-dimensional because $X$ is.
\end{proof}
Note that scheme-smoothness is a Zariski open condition.

It turns out that the character variety $X(\cM)$ of a knot exterior is often one-dimensional. This is the case if the knot is \emph{small} (if it does not contains a closed incompressible surface \cite[Proposition 2.4]{CCGLS}). This is also the case for any component $X \subset X(\cM)$ containing the character of a lift of the holonomy representation $\overline \rho \colon \pi_1(\cM) \to \operatorname{PSL}_2(\CC)$, provided that the interior of $\cM$ admits a hyperbolic structure.

The following proposition is the main result of this section.
\begin{proposition}
	\label{prop:slopetors}
	Let $X \subset X(M)$ be an irreducible one-dimensional component which contains a scheme-smooth, non-boundary parabolic character.  For all $\chi \in X$ the following holds
	\[s_K(\chi) = \frac{\mathbb{T}_{\cM,\ell}(\chi)}{\mathbb{T}_{\cM,m}(\chi)}.\]
\end{proposition}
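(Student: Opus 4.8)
The plan is to reduce the identity to the change-of-basis behaviour of the Reidemeister torsion, after pinning down the homological content of the slope. First I would pass to a convenient Zariski open subset. By \cref{lem:dim}, the scheme-smooth, non-boundary-parabolic character in $X$ together with $\dim X = 1$ gives $\dim H_1(\cM, \Adr) = 1$ on the Zariski open dense locus $U \subset X$ of scheme-smooth characters; shrinking $U$ if necessary, \cref{lem:bp} and the openness of the non-boundary-parabolic condition let me assume every $\chi \in U$ is irreducible and non-boundary-parabolic. By \cref{lem:dim1} the kernel $\mathcal Z(K, \Adr)$ is then one-dimensional, so $i_* \colon H_1(\partial \cM, \Adr) \to H_1(\cM, \Adr)$ is surjective onto the line $H_1(\cM, \Adr)$.

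Next I would record the homological meaning of the slope, exactly as in the proof of \cref{prop:cusp}. Writing $a\,(v_\rho \otimes \ell) + b\,(v_\rho \otimes m)$ for a generator of $\mathcal Z(K, \Adr)$, the membership in $\ker i_*$ reads, inside the line $H_1(\cM, \Adr)$,
\[i_*(v_\rho \otimes \ell) = -\tfrac{b}{a}\, i_*(v_\rho \otimes m) = s_K(\rho)\, i_*(v_\rho \otimes m).\]
Shrinking $U$ once more to the locus where $a, b \neq 0$, both $i_*(v_\rho \otimes \ell)$ and $i_*(v_\rho \otimes m)$ are nonzero and hence each is a basis of $H_1(\cM, \Adr)$; in the terminology of \cref{subsec:tors} this says $\rho$ is simultaneously $\ell$-regular and $m$-regular, so both $\mathbb T_{\cM, \ell}(\chi)$ and $\mathbb T_{\cM, m}(\chi)$ lie in $\CC^*$ for $\chi \in U$.

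Then I would apply the change-of-basis rule for the torsion. The numbers $\mathbb T_{\cM, \ell}(\chi)$ and $\mathbb T_{\cM, m}(\chi)$ are computed from the \emph{same} based cellular complex $C_*(\cM, \Adr)$ and the \emph{same} natural basis of $H_2(\cM, \Adr)$; they differ only in the basis chosen for the line $H_1(\cM, \Adr)$, namely $i_*(v_\rho \otimes \ell)$ versus $i_*(v_\rho \otimes m)$. Changing only the degree-one homology basis multiplies the torsion by the determinant of the transition matrix, raised to the sign dictated by the parity of the degree; since the transition factor between these two one-dimensional bases is precisely $s_K(\rho)$ by the displayed relation, I obtain $\mathbb T_{\cM, \ell}(\chi) = s_K(\chi)\,\mathbb T_{\cM, m}(\chi)$ for all $\chi \in U$. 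As both sides of the asserted identity are rational functions on $X$ --- the torsions by the discussion of \cref{subsec:tors} and the slope by \cref{cor:slExt} --- and they agree on the Zariski dense subset $U$, they agree on all of $X$.

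The main obstacle is fixing the sign and the exponent in the change-of-basis formula so that the quotient comes out as $s_K$ and neither as $s_K^{-1}$ nor $-s_K$; this is exactly the sign ambiguity left open in Porti's computation. I would resolve it by carefully tracking the degree in the torsion multiplicativity rule --- the relevant exponent is governed by the parity of the degree $1$ in which the basis is altered --- and by cross-checking the normalization against the degenerate strata: when $s_K(\chi) = 0$ the longitude becomes null-homologous, so $\chi$ fails to be $\ell$-regular and $\mathbb T_{\cM, \ell}(\chi) = 0$, while $s_K(\chi) = \infty$ forces $\mathbb T_{\cM, m}(\chi) = 0$. These two boundary behaviours single out the correct orientation of the ratio and exclude the inverse.
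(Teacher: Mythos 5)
Your architecture is the same as the paper's own (first) proof of \cref{prop:slopetors}: restrict to a Zariski open locus where characters are irreducible, scheme-smooth and non boundary-parabolic so that $\dim H_1(\cM,\Adr)=1$; observe that $\mathbb{T}_{\cM,\ell}$ and $\mathbb{T}_{\cM,m}$ are computed from the same based complex with the same $H_2$-basis and differ only through the choice of $H_1$-basis $i_*(v_\rho\otimes\ell)$ versus $i_*(v_\rho\otimes m)$; use the relation $i_*(v_\rho\otimes\ell)=s_K(\rho)\,i_*(v_\rho\otimes m)$ coming from the generator of $\mathcal Z(K,\Adr)$; and conclude by density of the locus where both sides are regular. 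The only structural difference is that where you invoke the abstract change-of-homology-basis rule for torsion, the paper writes the ratio concretely as a quotient of determinants $\det(b_1\oplus(v\otimes\ell),c_1)/\det(b_1\oplus(v\otimes m),c_1)$ and identifies it with $s_K(\rho)$ by Cramer's rule applied to the equation $y\,b_1+x\,(v\otimes m)=v\otimes\ell$ in $C_1(\cM,\Adr)$.

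The gap sits exactly at the point that carries the content of the proposition: the direction/exponent of the change-of-basis factor, i.e.\ whether the quotient is $s_K$ or $s_K^{-1}$. (Your worry about $-s_K$ is vacuous: a one-dimensional basis change contributes the transition scalar to the power $\pm1$, with no sign.) You defer this to ``carefully tracking the parity'' --- which is the actual proof, and is not carried out --- and to a cross-check on the strata $\{s_K=0\}$ and $\{s_K=\infty\}$. That cross-check is not a valid general argument, because these strata can be empty on the given component: for the trefoil the slope is constant equal to $-6$ on the component of irreducible characters (\cref{ex:trefoil}), so there is nothing to check against. The paper's Cramer computation is precisely the parity-tracking you postpone: since $v\otimes\ell-s_K\,(v\otimes m)$ is a boundary, hence lies in the span of $b_1$, column operations give $\det(b_1\oplus(v\otimes\ell),c_1)=s_K\cdot\det(b_1\oplus(v\otimes m),c_1)$, which pins the ratio as $s_K$ with no convention left free; this is also what makes the result sharper than Porti's, as the paper emphasizes. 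A second, smaller issue: shrinking $U$ to $\{a,b\neq 0\}$ silently assumes that locus is non-empty. When the slope is identically $0$ (resp.\ $\infty$) on $X$ --- equivalently, no character is $\ell$-regular (resp.\ $m$-regular) --- your density argument has nothing to propagate, and this must be handled as a separate case, as the paper does (both sides are then identically $0$, resp.\ $\infty$). Your final paragraph contains the needed observation, but it is framed as a normalization check rather than as this required degenerate branch of the proof.
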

We provide two different proofs of this result: one uses the natural definition of the torsion while the other relies directly on some results on the \emph{torsion form} proved by the first author in \cite{Ben16}.

\subsubsection{Torsion and chain complexes}
This section is devoted to the proof of \cref{prop:slopetors} by using the chain complex of $\cM$. The proof is very similar to \cite[Theorem 3.21]{DFL1} or \cite[Theorem 6.7]{DFL1}.
We use the following technical lemma.
\begin{lemma}
	\label{lem:regopen}
	Let $\gamma$ be a curve in $\pi_1(\partial \cM)$, and $\chi$ be an irreducible $\gamma$-regular character in $X(\cM)$. There exists a Zariski open neighborhood of $\chi$ such that any character in this neighborhood is irreducible and $\gamma$-regular.
\end{lemma}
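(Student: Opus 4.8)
The plan is to show that each of the two defining conditions---being irreducible and being $\gamma$-regular---cuts out a Zariski open subset, and to intersect the two neighbourhoods. Since both conditions are invariant under conjugation, I would work on the representation variety near a lift $\rho_0\in R(\cM)$ of $\chi$ and descend to $X(\cM)$ at the end, noting that the relevant loci are cut out by conjugation-invariant regular functions (traces, and minors of the twisted chain complex). Irreducibility is the easy half: by \eqref{eq:RedTrace} the reducible characters form a Zariski closed subset, so on a first open neighbourhood every character is irreducible, and there every meridian satisfies $\rho(m)\neq\pm I_2$. This last fact lets me produce the vector $v_\rho$ algebraically: for $\rho(m)\neq\pm I_2$ the operator $\Adr(m)-\Id$ has constant rank $2$ (it always has $1$ as an eigenvalue, with a one-dimensional eigenspace when $\rho(m)$ is non-central, whether diagonalisable or parabolic), so its kernel is the $\Adr_\partial$-invariant line and varies as an algebraic line bundle; Zariski-locally it admits a regular nowhere-vanishing section $\rho\mapsto v_\rho$. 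Consequently the cycle $c(\rho)=v_\rho\otimes\gamma\in C_1(\cM,\Adr)$ depends regularly on $\rho$.

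The key structural input is the a priori lower bound $\dim H_1(\cM,\Adr)\geq 1$. Indeed, shrinking the neighbourhood so that every $\rho$ is also non-parabolic, \cref{lem:dim1} gives $H_1(\partial\cM,\Adr)\cong\CC^2$ and $\dim\mathcal Z(K,\Adr)=1$, hence $i_*$ has rank one and its image is a line inside $H_1(\cM,\Adr)$. With this in hand, $\gamma$-regularity (which presupposes $\dim H_1(\cM,\Adr)=1$ and that $v_\rho\otimes\gamma$ generates it) becomes equivalent to the conjunction of the two conditions $\dim H_1(\cM,\Adr)\leq 1$ and $i_*(v_\rho\otimes\gamma)\neq 0$: the first upgrades the inequality $\dim H_1\geq1$ to an equality, and given a one-dimensional $H_1$ the second says exactly that the image of $c(\rho)$ generates it.

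It remains to see that both conditions are Zariski open near $\rho_0$, and this is where the main obstacle lies. Upper semicontinuity of twisted homology dimension (the cell structure fixes $\dim C_i$, and boundary-map ranks are lower semicontinuous) makes $\{\dim H_1(\cM,\Adr)\leq 1\}$ open. The subtle point is that the condition $i_*(v_\rho\otimes\gamma)\neq0$, i.e. $c(\rho)\notin\operatorname{im}\partial_2$, is \emph{not} open in general, because $\operatorname{rank}\partial_2$ may jump up and absorb $c(\rho)$ into its image. This is precisely where the bound $\dim H_1(\cM,\Adr)\geq1$ is indispensable: on the neighbourhood it forces $\operatorname{rank}\partial_1+\operatorname{rank}\partial_2\leq\dim C_1-1$, and combined with the lower-semicontinuous inequalities $\operatorname{rank}\partial_1\geq\operatorname{rank}\partial_1(\rho_0)$ and $\operatorname{rank}\partial_2\geq\operatorname{rank}\partial_2(\rho_0)$ (whose values at $\rho_0$ already sum to $\dim C_1-1$), it pins both boundary ranks to their values at $\rho_0$. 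With $\operatorname{rank}\partial_2$ now locally constant, the augmented matrix $B=[\,\partial_2\mid c(\rho)\,]$ satisfies $\operatorname{rank} B\leq\operatorname{rank}\partial_2+1$ identically, while $\operatorname{rank} B\geq\operatorname{rank} B(\rho_0)=\operatorname{rank}\partial_2+1$ is an open condition; hence $\operatorname{rank} B=\operatorname{rank}\partial_2+1$, that is $c(\rho)\notin\operatorname{im}\partial_2$, on a Zariski neighbourhood. Intersecting with the irreducible locus and descending to $X(\cM)$ yields the asserted open set.
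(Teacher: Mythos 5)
Your proof is correct, and it diverges from the paper's argument at precisely the two points where the real content lies, so a comparison is worthwhile. The common skeleton is identical: irreducibility is Zariski open by \cref{eq:RedTrace}, and $\gamma$-regularity comes from squeezing $\dim H_1(\cM,\Adr)$ between an a priori lower bound of $1$ and upper semicontinuity, plus an open non-degeneracy condition. The differences: (i) for the lower bound, the paper identifies $H_1(\cM,\Adr)$ with the Zariski cotangent space of $X(\cM)$ (as in \cref{prop:Cotangent}) and invokes ``the dimension of $X$'', which silently imports the one-dimensionality hypothesis of \cref{prop:slopetors} even though the lemma is stated without it; you instead deduce $\dim H_1(\cM,\Adr)\geq 1$ from \cref{lem:dim1} and Poincar\'e duality, since $i_*$ has one-dimensional kernel on the two-dimensional $H_1(\partial\cM,\Adr)$ and hence rank exactly one. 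Your route proves the lemma as stated, for a component of any dimension, and avoids the cotangent identification altogether. (ii) Where the paper disposes of the second condition in one line (``the rank of the linear map $H_1(\gamma,\Adr)\to H_1(\cM,\Adr)$ is lower semi-continuous''), you correctly observe that this is not automatic for a map into a quotient by the varying subspace $\operatorname{im}\partial_2$, and you supply the missing justification: the squeeze pins $\operatorname{rank}\partial_1$ and $\operatorname{rank}\partial_2$ to locally constant values, after which lower semicontinuity of the augmented-matrix rank makes the non-vanishing of the class of $v_\rho\otimes\gamma$ a genuinely open condition. This rank-pinning is exactly the content hidden behind the paper's one-liner (it is what makes the family of $H_1$'s a vector bundle near $\rho_0$), so your version is a rigorous unpacking rather than a detour.

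One small step you should make explicit: ``shrinking the neighbourhood so that every $\rho$ is non-parabolic'' presupposes that $\rho_0$ itself is non-parabolic. This does hold, because $\rho_0$ is irreducible: if every trace were $\pm 2$, the identity $\Tr(AB)+\Tr(AB^{-1})=\Tr A\,\Tr B$ would make $\gamma\mapsto\tfrac 12 \Tr\rho_0(\gamma)$ multiplicative, so all commutators would have trace $2$ and \cref{eq:RedTrace} would force reducibility. More economically, you can skip the shrinking entirely: the conclusions of \cref{lem:dim1} that you actually use ($H_1(\partial\cM,\Adr)\simeq\CC^2$ and $\dim\mathcal Z(K,\Adr)=1$) only require $\rho(m)\neq\pm I_2$, and this holds on the whole irreducible locus since meridians generate $\pi_1(\cM)$.
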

\begin{proof}
	Being irreducible is a Zariski open condition, see \cref{eq:RedTrace}. The $\gamma$-regularity follows from lower semi-continuity of the rank of a linear map. Indeed the dimension of $H_1(\cM, \Adr)$ is upper semi-continuous. It is at least one (the dimension of $X$) again because it is isomorphic to the Zariski tangent space hence it is locally constant equal to one.
	On the other hand, the rank of the linear map $H_1(\gamma, \Adr) \to H_1(\cM, \Adr)$ sending $v_\rho \otimes \gamma$ to itself is lower semi-continuous. It is at most one (the dimension of $H_1(\gamma, \Adr)$ and it cannot decrease on a neighborhood of $\chi$.
	We deduce that $H_1(\gamma, \Adr) \to H_1(\cM, \Adr)$ is an isomorphism on a Zariski open subset.
\end{proof}

\begin{proof}[Proof of \cref{prop:slopetors}]
	Let $\chi$ be an irreducible, scheme-smooth,  non boundary-parabolic character and let $\rho$ be a representation in $R(\cM)$ with character $\chi$.
	We first assume that $\rho$ is $\ell$ and $m$-regular, that is for $v \in \mathfrak{sl}_2(\CC)$ an $\Adr_\partial$-invariant vector,  both $v \otimes \ell$ and $v \otimes m$ provide a basis of $H_1(\cM, \Adr)$.

	The calculation of the torsions $\mathbb{T}_{\cM,\ell}(\chi)$ and $\mathbb{T}_{\cM,m}(\chi)$ involves different choices of homology basis of $C_*(\cM, \Adr)$. By \cite[Proposition 3.18]{Porti97},  the bases of $H_2(\cM, \Adr)$ are determined by the fundamental class of $H_2(\partial \cM; \CC)$ and can be choosen to be the same.
	Hence,  if $b_1$ is a basis of $\mathrm{im} (\partial_1)$, the ratio of torsions corresponding to the choice of $m$ or of $\ell$ is {reduced to}
	\begin{equation*}
		\frac{\mathbb{T}_{\cM,\ell}(\chi)}{\mathbb{T}_{\cM,m}(\chi)}= \frac{\det(b_1 \oplus (v \otimes \ell), c_1)}{\det(b_1 \oplus (v \otimes m),c_1)}.
	\end{equation*}
	In parallel, consider the affine equation in $C_1(\cM, \Adr)$:
	\[y \, b_1 + x \, v \otimes m= v \otimes \ell,\]
	 with at least a solution $y=0$ and $x= s_K(\rho)$.
	 The Cramer determinants expressed in the common basis $c_1$ show that $s_K(\rho)$ coincides with the ratio of torsions.

	If there exists a character in $X$ which is $\ell$-regular and a character in $X$ which is $m$-regular, then \cref{prop:slopetors} holds on the whole component $X$ by \cref{lem:regopen}.

	Assume that $X$ contains only characters that are not (say) $\ell$-regular. Since the map $H_1(\partial \cM, \Adr) \to H_1(\cM, \Adr)$ is not trivial (by \cref{lem:dim1}), it is onto on a Zariski open subset $U \in X$, again because $H_1(\cM, \Adr)$ has dimension one generically. Thus all characters in $U$ must be $m$-regular, and it follows from the definition that the slope and the quotient of torsions are identically zero on $X$. A similar argument works replacing $\ell$ by $m$ and zero by infinity.
\end{proof}

\subsubsection{The torsion form} \label{torsionform}
In this paragraph, we present an alternative proof of \cref{prop:slopetors}. We follow a slightly different point of view on the torsion, as a volume form on the character variety. The following lemma asserts that the cotangent space of the character variety \cite[Section 8]{Sikora} is isomorphic to the first $\Adr$-twisted homology group.
\begin{lemma}
	\label{prop:Cotangent}
	Let $\chi$ be an irreducible character in $X(\cM)$, and a representation $\rho$ with character $\chi$. Let $T^*_{\chi} X(\cM)$ be the Zariski tangent space
	of $X(\cM)$ at $\chi$. There is a natural isomorphism
	\[H_1(\cM, \Adr) \simeq T^*_{\chi} X(\cM).\]
	Moreover, if $\chi$ is not boundary-parabolic, then
	\[ H_1(\partial \cM, \Adr) \simeq T^*_{r(\chi)} X(\partial \cM).\]
\end{lemma}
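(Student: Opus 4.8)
The plan is to obtain both isomorphisms by combining the classical deformation-theoretic description of the tangent space to the character variety with the self-duality of the adjoint local system. For the first isomorphism, I would start from the fact, already invoked in \cref{lem:dim}, that for an irreducible character $\chi$ there is a natural isomorphism between the Zariski tangent space $T_\chi X(\cM)$ and the first twisted cohomology group $H^1(\cM, \Adr)$, see \cite[Theorem 1]{Sikora}. It then remains to identify $H^1(\cM, \Adr)$ with the dual of $H_1(\cM, \Adr)$, so that dualizing yields $H_1(\cM, \Adr) \simeq T^*_\chi X(\cM)$.

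The identification $H^1 \simeq H_1^*$ is where the special structure of the coefficients enters. Write $V_\rho = \mathfrak{sl}_2(\CC)$ for the adjoint module and $C_*$ for the complex $C_*(\cM; \ZZ[\pi_1(\cM)])$ of \cref{subsec:admi}. The Killing form $B(X,Y) = \Tr(XY)$ on $V_\rho$ is nondegenerate and $\Ad$-invariant, hence a $\pi_1(\cM)$-equivariant isomorphism between $V_\rho$ and its dual local system $V_\rho^*$. Over the field $\CC$, the adjunction $\operatorname{Hom}_\CC(V_\rho \otimes_{\ZZ[\pi_1(\cM)]} C_*, \CC) \simeq \operatorname{Hom}_{\ZZ[\pi_1(\cM)]}(C_*, V_\rho^*)$ together with the exactness of dualization identifies $H^n(\cM, V_\rho^*)$ with $(H_n(\cM, V_\rho))^*$; feeding in $V_\rho^* \simeq V_\rho$ gives a natural isomorphism $H^1(\cM, \Adr) \simeq (H_1(\cM, \Adr))^*$. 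Composing with the tangent space isomorphism and dualizing once more proves the first statement.

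For the boundary, I would run the same argument on $\partial \cM$. First one checks that a non-boundary-parabolic character restricts to a smooth point of $X(\partial \cM)$: by \cref{caractorus} the surface $X(\partial\cM) \simeq (\CC^*)^2/\sigma$ is singular exactly at $(L,M) = (\pm 1, \pm 1)$, and these are precisely the boundary-parabolic (or central) characters, so $r(\chi)$ is a smooth point where the tangent space is two-dimensional, matching $\dim H_1(\partial \cM, \Adr) = 2$ from \cref{lem:dim1}. The Killing-form duality then applies verbatim to give $H_1(\partial\cM, \Adr) \simeq T^*_{r(\chi)} X(\partial \cM)$, once the tangent-cohomology identification is established at $r(\chi)$.

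The main obstacle is precisely this last point in the boundary case: the representation $\rho_\partial$ of $\pi_1(\partial \cM) = \ZZ^2$ is abelian, hence reducible, so \cite[Theorem 1]{Sikora} cannot be quoted directly. The remedy is a direct computation at the smooth point, using the splitting $\mathfrak{sl}_2(\CC) = \CC_{\lambda^2} \oplus \CC \oplus \CC_{\lambda^{-2}}$ of the adjoint module already exploited in \cref{lem:abel}: for a non-boundary-parabolic $\rho_\partial$ the two nontrivial summands carry nontrivial characters of $\ZZ^2$ and contribute nothing to $H^1(\partial\cM)$, so $H^1(\partial \cM, \Adr) \simeq H^1(\partial \cM, \CC) \simeq \CC^2$, which one matches explicitly with the tangent space of $(\CC^*)^2/\sigma$ at $r(\chi)$. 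Finally, I would record that both isomorphisms are natural with respect to the inclusion $\partial \cM \hookrightarrow \cM$, so that the map $i_* \colon H_1(\partial \cM, \Adr) \to H_1(\cM, \Adr)$ corresponds to the transpose of the differential of the restriction $r \colon X(\cM) \to X(\partial \cM)$; this compatibility, rather than the bare isomorphisms, is what the slope computation ultimately requires.
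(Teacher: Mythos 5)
Your proposal is correct, and its core is the same as the paper's: both arguments hinge on \cite[Theorem 1]{Sikora} identifying the Zariski tangent space at an irreducible character with $H^1(\cM,\Adr)$. The difference is that the paper's entire proof consists of that citation (plus the conormal-bundle remark), whereas you spell out the two steps it leaves implicit. First, you pass from Sikora's statement $T_\chi X(\cM)\simeq H^1(\cM,\Adr)$ to the homological form in the lemma via the $\Ad$-invariance of the Killing form and universal coefficients over $\CC$; this is legitimate and consistent with the paper's toolkit, since the same Killing-form self-duality is already used for Poincar\'e duality in \cref{lem:dim1}. Second, and more substantially, you notice that for the boundary statement Sikora's theorem cannot be quoted at all, because $\rho_\partial$ is abelian, hence reducible; you repair this by a direct computation at the smooth point $r(\chi)$ of $(\CC^*)^2/\sigma$ (smoothness away from $(L,M)=(\pm1,\pm1)$, as in \cref{caractorus}), using the splitting $\mathfrak{sl}_2(\CC)=\CC_{\lambda^2}\oplus\CC\oplus\CC_{\lambda^{-2}}$ from \cref{lem:abel} to reduce to untwisted coefficients. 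This fills a genuine gap in the paper's one-line proof. Your closing observation --- that the isomorphisms are natural for $\partial\cM\hookrightarrow\cM$, so that $i_*$ is the transpose of $dr$ --- is also exactly the compatibility that the applications (\cref{prop:slopetors}, \cref{th:slopeApoly}) require, and the paper only records it implicitly through the conormal-bundle remark.
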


The proof of \cref{prop:Cotangent} follows from \cite[Theorem 1]{Sikora}. Note that through the isomorphim, the space $\mathcal Z(K, \Adr)$ is the Zariski conormal bundle of $r(X(\cM))$ in $X(\partial \cM)$.

If $X \subset X(\cM)$ is a one-dimensional component of the character variety which contains a scheme-smooth character,  the first author proved in \cite[Proposition 5.1]{Ben16} that the \emph{torsion form} can be written as
\begin{equation}
	\label{eq:torsform}
	\tor(\cM) = \frac 1 {{\mathbb{T}}_{\cM, \ell}} r^* \left( \frac {dL}{L} \right)= \frac 1 {{\mathbb{T}}_{\cM, m}} r^*\left( \frac {dM}{M}\right)
\end{equation}
where $r^*$ is the cotangent map
\[r^*\colon T^* X(\partial \cM)\to T^*X(\cM).\]

\begin{proof}[Proof of \cref{prop:slopetors}]
	By \cref{eq:torsform}, the ratio of torsions can be written as
	\[\frac{{\mathbb{T}}_{\cM, \ell}}{{\mathbb{T}}_{\cM, m}} = \frac {r^*( dL/L)}{r^*(dM/M)}.\]
		If  $\chi$ is a non boundary-parabolic character, the character variety $X(\partial \cM)$ is diffeomorphic to $(\CC^*)^2$ in a neighborhood of $r(\chi)$.  A local chart of $X(\partial \cM)$  is given by taking $\frak l, \frak m \in \CC$ satisfying $\exp \frak l = L$ and $\exp \frak m = M$.
	The latter ratio of torsions can be  written
	\[\frac{{\mathbb{T}}_{\cM, \ell}}{{\mathbb{T}}_{\cM, m}} =\frac {r^*(d\,\frak l)}{r^*(d\,\frak m)}.\]

		\cref{prop:Cotangent} implies that the cotangent map $r^*\colon T^*_{r(\chi)} X(\partial \cM)\to T^*_\chi X(\cM)$ coincides with the homomorphism in homology $H_1(\partial \cM, \Adr) \to H_1(\cM, \Adr)$, thus by \cref{lem:dim1} the range of the map $r^*$ is one-dimensional, and the images of the elements $d\, \frak l, d \, \frak m$ are colinear. It turns out that the ratio $\frac {r^*(d\,\frak l)}{r^*(d\,\frak m)}$ coincides with the slope by its very definition.

	Finally, the formula extends to the whole $X$ since irreducible, non boundary-parabolic character are Zariski dense in $X$.
\end{proof}
\subsection{Compute the slope}
\label{subsec:Alex}
In this section we compute the slope $s_K(\rho)$ when $\rho$ is an irreducible non-boundary parabolic representation, with Fox calculus, similarly to  \cite{DFL1}.
Note that for the boundary-parabolic case, the slope can be computed directly from the representation using \cref{cor:slExt}.

Consider a  presentation of the knot group
\[`p_1(\cM) = \left<x_1, \ldots, x_p \mid r_1, \ldots, r_q\right>\]
obtained from a Wirtinger presentation. We also assume that $m = x_1$ is the preferred meridian and add the preferred longitude $\ell = x_2$,  with the relation $[m,\ell]=1$. Consider the complex of $\ZZ[`p_1(\cM)]$-modules
\[S_* := S_2 "-{^{\partial_1}}>" S_1 "-{^{\partial_0}}>" S_0\]
where
\begin{equation*}
	S_2 = \bigoplus_{j=1}^q \ZZ[`p_1(\cM)] `(`*) r_j, \  S_1 = \bigoplus_{j=1}^p \ZZ[`p_1(\cM)] `(`*) d x_i , \ S_0 = \ZZ[`p_1(\cM)]
\end{equation*}
and $dx_i$ is a formal generator corresponding to $x_i$. Let $\partial / \partial x_i \colon \ZZ[\pi_1(\cM)] \to \ZZ[\pi_1(\cM)]$ denote the Fox derivatives.
For every $i `: \{1, \dots, p	\}$ and $j `: \{1, \dots, q\}$ the boundary operators  are defined by
\begin{equation*}
	\partial_1 : r_j "|->" dr_j ,\quad  \partial_0 : dx_i "|->" x_i,
\end{equation*}
where $dw$ is the Fox differential of the word $w `: `p_1(\cM)$:
\[dw := \sum_{i=1}^p \frac{\partial w}{\partial x_i}dx_i `: S_1.\]
Now consider the $\Adr$-twisted chain complex $S_*(`r) := \mathfrak{sl}_2(\mathbb{C}) \: {`(`*)}_{\ZZ[`p_1(\cM)]} \: S_*$, where elements
 of $\mathfrak{sl}_2$ are identified with line vectors in $\CC^3$.
The ($\Adr$-twisted) \emph{Alexander matrix} of $\cM$ associated with the presentation is the matrix of $\partial_1 (`r)$, {with coefficients in $\CC$} {and given by the blockwise definition}:
\[{\left((\Adr) \left(\dfrac{\partial r_i}{\partial x_j}\right)\right)}_{1 \leq i \leq q,\: 1\leq j \leq p}\]

The computation of the slope using $S_*(`r)$ is achieved with the following result:
\begin{proposition}
	\label{prop:slcomp}
	If $`r$ is irreducible and non-boundary parabolic, then there exists $a,b `: \CC$ and an $\Adr_\partial$-invariant vector $v_\rho \in \mathfrak{sl}_2(\CC)$  such that
	\[\mathrm{im}\left(\partial_1 (`r)\right) \cap \left< v_{`r} `(`*) d\ell, v_{`r} `(`*) dm \right> = \left< a \, (v_{`r} `(`*) d\ell) + b \, (v_{`r} `(`*) dm) \right>\]
	and the slope is $s_{K}(\rho) = -\frac{b}{a}$.
\end{proposition}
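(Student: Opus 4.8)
The plan is to identify the $\Adr$-twisted homology $H_1(\cM,\Adr)$ with the first homology of the Fox-calculus complex $S_*(\rho)$, and then to translate \cref{def:slope} into the stated intersection condition. The starting point is that the presentation $2$-complex $W$ attached to $\pi_1(\cM)=\langle x_1,\dots,x_p\mid r_1,\dots,r_q\rangle$ computes the twisted homology of $\cM$ in low degrees: a knot exterior collapses onto a spine carrying its Wirtinger presentation, and adjoining the longitude $\ell=x_2$ through its defining word together with the peripheral relation $[m,\ell]=1$ only alters $W$ by an elementary expansion and by the wedge of a $2$-sphere, neither of which affects $H_1$. Consequently $C_*(\cM,\Adr)$ and $S_*(\rho)$ have the same homology in degrees $0$ and $1$, so that
\[H_1(\cM,\Adr) = \ker\partial_0(\rho)\big/\operatorname{im}\partial_1(\rho).\]

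First I would record that $v_\rho\otimes dm$ and $v_\rho\otimes d\ell$ are cycles. Since $\rho$ is irreducible and not boundary-parabolic, the hypotheses of \cref{lem:dim1} hold (irreducibility forces the image of a meridian to differ from $\pm I_2$), and \cref{lem:dim1} provides an $\Adr_\partial$-invariant vector $v_\rho$ together with the basis $\{v_\rho\otimes\ell,\,v_\rho\otimes m\}$ of $H_1(\partial\cM,\Adr)\cong\CC^2$ and the one-dimensionality of $\mathcal Z(K,\Adr)$. Because $m=x_1$ and $\ell=x_2$ are peripheral and $v_\rho$ is $\Adr_\partial$-invariant, the twisted augmentation $\partial_0(\rho)$ annihilates $v_\rho\otimes dm$ and $v_\rho\otimes d\ell$; hence both define classes in $H_1(\cM,\Adr)$. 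Under the chain-level identification above, the inclusion-induced map $i_*\colon H_1(\partial\cM,\Adr)\to H_1(\cM,\Adr)$ sends $v_\rho\otimes\ell\mapsto[v_\rho\otimes d\ell]$ and $v_\rho\otimes m\mapsto[v_\rho\otimes dm]$, because the peripheral curves $\ell$ and $m$ are carried to the $1$-cells $x_2$ and $x_1$ of $W$.

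Then I would translate the kernel. A combination $a\,(v_\rho\otimes\ell)+b\,(v_\rho\otimes m)$ lies in $\mathcal Z(K,\Adr)=\ker i_*$ precisely when $a\,[v_\rho\otimes d\ell]+b\,[v_\rho\otimes dm]=0$ in $H_1(\cM,\Adr)$, that is, when the cycle $a\,(v_\rho\otimes d\ell)+b\,(v_\rho\otimes dm)$ lies in $\operatorname{im}\partial_1(\rho)$. As this cycle also lies in $\langle v_\rho\otimes d\ell,\,v_\rho\otimes dm\rangle$, the set of admissible pairs $(a,b)$ is exactly
\[\operatorname{im}(\partial_1(\rho))\cap\langle v_\rho\otimes d\ell,\,v_\rho\otimes dm\rangle.\]
By \cref{lem:dim1} the kernel $\mathcal Z(K,\Adr)$ is one-dimensional, hence so is this intersection; writing a generator as $a\,(v_\rho\otimes d\ell)+b\,(v_\rho\otimes dm)$ yields the generator $a\,(v_\rho\otimes\ell)+b\,(v_\rho\otimes m)$ of $\mathcal Z(K,\Adr)$, and \cref{def:slope} gives $s_K(\rho)=-b/a$.

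The main obstacle is the first step: making rigorous the chain-level identification of $S_*(\rho)$ with $C_*(\cM,\Adr)$ in a way that matches the algebraic generators $dm=dx_1$ and $d\ell=dx_2$ with the geometric peripheral curves under $i_*$. Once one fixes a CW (or handle) decomposition of $\cM$ with a single $0$-cell, the $1$-cells $x_1,\dots,x_p$ and the $2$-cells $r_1,\dots,r_q$ realizing the presentation, with $m$ and $\ell$ appearing as the $1$-cells $x_1$ and $x_2$, the identification becomes tautological. The remaining bookkeeping, namely the redundancy of the $[m,\ell]$-relator (which contributes nothing to $\operatorname{im}\partial_1(\rho)$ after pairing with the invariant vector $v_\rho$), is routine and entirely parallel to \cite[Theorem 3.21]{DFL1}.
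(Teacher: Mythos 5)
Your proof is correct and follows essentially the same route as the paper: both identify the Fox-calculus complex $S_*(\rho)$ with the twisted chain complex of $\cM$, observe that $v_\rho \otimes d\ell$ and $v_\rho \otimes dm$ represent $i_*(v_\rho \otimes \ell)$ and $i_*(v_\rho \otimes m)$, and translate membership of $a\,(v_\rho\otimes\ell)+b\,(v_\rho\otimes m)$ in $\mathcal Z(K,\Adr)=\ker i_*$ into membership of the corresponding chain in $\operatorname{im}\partial_1(\rho)$, concluding by one-dimensionality from \cref{lem:dim1}. The only cosmetic difference is that the paper, following Crowell, works with homology relative to the basepoint, $H_1(\cM,p,\Adr)=S_1(\rho)/\operatorname{im}\partial_1(\rho)$, together with the injection $H_1(\cM,\Adr)\hookrightarrow H_1(\cM,p,\Adr)$, whereas you work with absolute homology and verify that the relevant chains are cycles; the two formulations are equivalent.
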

\begin{proof}
	Set a base point $p$ on $\partial \cM$. Following Crowell~\cite{Crowell71}, the homology space $H_1(S_*(`r))$ is isomorphic to $H_1(\cM,p,\Adr)$.
	The subcomplex ${S_*(`r_{\partial})}$ defined by considering only the generators $x_1=m$, $x_2=\ell$ and the relation $[m,\ell] = 1$ computes the space $H_1(\partial \cM,p;\Adr)$ as well. There are natural identifications 
	\[H_1\left(\partial \cM,p;\Adr \right)=H_1(\partial \cM,\Adr)  \text{ and }  H_1(\partial \cM,p;\Adr) \hookrightarrow H_1(\partial \cM,\Adr) \]
	%\left< v_{`r} `(`*) \ell, v_{`r} `(`*) m \right>
	and the following diagram commutes:
	\[\begin{tikzcd}
		{S_1(`r_{\partial})} \ar[d,"h_{\partial \cM}"',two heads] \ar[rr,hook] & & S_1(`r) \ar[d,"h",two heads] \\
		H_1(\partial \cM, \Adr) \ar[r,"i_*"] & H_1(\cM,\Adr) \ar[r,hook] & H_1(\cM,{p},\Adr)
	\end{tikzcd}\]
	where $h$ and $h_{\partial \cM}$ are the quotient maps.

	Let $u \in \mathfrak{sl}_2(\CC)$ be an $\Adr$-invariant vector and $\gamma \in \pi_1(\cM)$.  Any element $u `(`*) \gamma$ of  $H_1(\partial \cM, \Adr)$ can be lifted to  $u `(`*) dw$ in $S_1(`r_{\partial})$. Since $\rho$ is admissible, there exists $a,b `: \CC$ such that $\mathcal Z(K,\Adr)=\ker i_* = \left<a \, (v_{`r} `(`*) \ell) + b \, (v_{`r} `(`*) m)\right>$. Then $a \, (v_{`r} `(`*) {d\ell}) + b \, (v_{`r} `(`*) {dm}) `: \ker (h) = \mathrm{im}\left(\partial_1 (`r)\right)$.

	Reciprocally, suppose that there exists complex numbers $a,b `: \CC$ such that $dz := a \, (v_{`r} `(`*) {d\ell}) + b \, (v_{`r} `(`*) {dm})$ is a non-zero vector belonging to $\mathrm{im}\left(\partial_1 (`r)\right)$. Then  $h_{\partial \cM}(dz) = a \, (v_{`r} `(`*) \ell) + b \, (v_{`r} `(`*) {m})$  must be non-zero since $(v_{`r} `(`*) \ell, v_{`r} `(`*) {m})$ is a free basis of $H_1(\partial \cM, \Adr)$. However, $h(dz) = i_*\left(h_{\partial \cM}(dz)\right)=0$; hence $h_{\partial \cM}(dz) `: \ker i_*$. Since $\ker i_*$ is one-dimensional, then $\ker i_* = \left<h_{\partial \cM}(dz)\right>$, and the slope is 
 $-\frac{b}{a}$.
\end{proof}
\begin{example} \emph{The trefoil knot}
	\label{ex:trefoil}
	Let $T$ be the exterior of the right-handed trefoil knot, with group $\pi_1(\cM) = \langle u,v \mid uvu=vuv\rangle.$ Any irreducible representation is conjugate to $\rho$ with
	\begin{equation*}
		\rho(u) = \bma M & 1 \\0 & M^{-1} \ema ,\quad \rho(v) = \bma M^{-1} & 0 \\ -1 & M \ema
	\end{equation*}
	 where $M `: \CC$.
	If $\ell= vuv^{-1}uvu^{-3}$ is the prefered longitude with corresponding meridian $m=u$, we obtain
	\[\rho(\ell) = \bsm -M^{-6} & M^5+M^3 + M + M^{-1} +M^{-3} + M^{-5}\\ 0 & -M^6\esm.\]
	Whenever $M \neq \pm 1$, the vector $v_\rho = \bma 0, 1, \frac 1 {M-M^{-1}} \ema$ is right $\Adr_{\partial}$-invariant.
	By \cref{subsec:Alex}, the Alexander matrix (acting on the right on the coefficients) whose rowspace is generating $\mathrm{im}\left(\partial_1\right)$ is given by
		\[\begin{array}{rr*{3}{c}:*{3}{c}:*{3}{c}l}
		& & \multicolumn{3}{c}{\mathfrak{sl}_2(\mathbb{C}) `(`*) d\ell} & \multicolumn{3}{c}{\mathfrak{sl}_2(\mathbb{C}) `(`*) dm} & \multicolumn{3}{c}{\mathfrak{sl}_2(\mathbb{C}) `(`*) dv} & \\
		\multirow{3}{*}{$dr_1$} &\ldelim({6}{*} & 0 & 0 & 0 & 1 & 0 & 0 & -{M^{-2}} & 0 & 0 & \rdelim){6}{*}\\
		& & 0 & 0 & 0 & 0 & 1 & 0 & -M^{-1} & -1 & 0 & \\
		& & 0 & 0 & 0 & 0 & 0 & 1 & 1 & 2 \, M & -M^{2} & \\
		\cdashline{3-11}
		\multirow{3}{*}{$dr_2$} & & -1 & 0 & 0 & -2 {\left( 2 - M^{-2}\right)} & 0 & 0 & 1 + M^{-2} & 0 & 0 & \\
		& & 0 & -1 & 0 & 2{M}^{-1} & -2 & 0 & {M}^{-1} & 2 & 0 \\
		& & 0 & 0 & -1 & -2 & -4 \, M & 2 \, M^{2} - 4 & -1 & -2 \, M & M^{2} + 1 & \\
	\end{array}\]
	{where $r_1$ is $uvu = vuv$ and $r_2$ is the longitude definition.}
	By \cref{prop:slcomp}, the space $\mathcal Z(\cM, \Adr)$ has generator
	\[\bma 0,1,\frac 1 {M-M^{-1}},0,6 , \frac 6 {M-M^{-1}},0,0,0\ema\]
	in the 2-dimensional subspace spanned by
	\[\left\{ \bma0,1,\frac 1 {M-M^{-1}},0,0,0,0,0,0\ema, \bma 0,0,0,0,1,\frac 1 {M-M^{-1}},0,0,0 \ema\right\}\] and the slope is
	$s_T(\Adr) = -6.$ In particular it does not depend on $\rho$.
\end{example}

\begin{example}
	\emph{The figure-eight knot}
	\label{ex:8}
	Let $K$ be the figure-eight knot. There is a unique component $X \subset X(\cM)$ containing irreducible characters (see for instance \cite[Examples 1.6.2 and 5.5]{Ben16}). This component is a plane curve given by the equation
	\[\{ 2x^2+y^2-x^2y-y-1 = 0 \} \subset \CC^2,\]
	{where $x$ it the coordinate function given by $\chi \mapsto \chi(m)$. Note that the coordinate function of the longitude is $\chi \mapsto \chi(\ell) = x^4-5x^2+2$.} Using \cite[Th\'eor\`eme 4.1 (ii)]{Porti97} and \cref{prop:slopetors} we compute
	\begin{align*}
	s_K(x,y)^2 =\frac{x^2-4}{(x^4-5x^2+2)^2-4} (4x^3-10x)^2
	= \frac{4 (2x^2-5)^2}{(x^2-5)(x^2-1)}
	\end{align*}
	Expanding the denominator with the relation $x^2 = \frac{y^2-y-1}{y-2}$, we obtain, up to sign
	\[s_K(x,y) = \pm \frac{2 (2x^2-5)(y-2)}{(y-1)(y-3)}.\]
	\end{example}
%%%%%%%%%%%%%%%%%%%%
%%%%%%%%%%%%%%%%%%%%
\section{Slope and A-polynomial}
%%%%%%%%%%%%%%%%%%%%
%%%%%%%%%%%%%%%%%%%%
\label{subsec:slopeApol}

In this section, we express the slope function in terms of the A-polynomial of the knot.
 As mentioned in \cref{subsec:Apol}, $r(X)$ might have $0$-dimensional components but they are omitted in the definition of the A-polynomial.

\begin{theorem}
	\label{th:slopeApoly}
	Let $X\subset X(\cM)$ be an irreducible component such that $r(X)$ has dimension~$1$. For all $\chi \in X$ with $r(\chi)=(L,M)$, the following holds
		\begin{equation*}
		\label{equa:slopeApoly}
		s_K(\chi)=- \frac M L \cdot \frac{\partial_M A(L,M)}{\partial_L A(L,M)},
	\end{equation*}
	where $A(L,M)=A_K(L,M)$ and $\partial_L$ and $\partial_M$ are the partial derivatives.
\end{theorem}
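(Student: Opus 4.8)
The plan is to reinterpret the line $\mathcal Z(K,\Adr)$, via the cotangent description of \cref{prop:Cotangent}, as the conormal direction to the curve $r(X)$ inside $X(\partial\cM)$, and then to read the slope directly off the differential of the defining polynomial. If $X$ is the reducible component then $r(X)=\{L=1\}$, so the relevant factor is $A_X=L-1$ and the right-hand side vanishes identically, matching $s_K\equiv 0$ from \cref{lem:abel}; I may therefore assume that $X$ contains irreducible characters. Since $r(X)$ is one-dimensional it is an irreducible plane curve, cut out by a single irreducible factor $A_X$ of $A_K$. At a generic $(L,M)\in r(X)$ the complementary factors of $A_K$ do not vanish, so the Leibniz rule gives $\partial_M A_K/\partial_L A_K=\partial_M A_X/\partial_L A_X$, and it suffices to prove the formula with $A_X$ in place of $A$.

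First I would restrict to an irreducible, non-boundary-parabolic, scheme-smooth character $\chi\in X$; these form a Zariski-dense open subset, because $X$ is reduced, irreducible characters are dense, and the boundary-parabolic characters map into the finite singular set $\{(\pm1,\pm1)\}$ of $X(\partial\cM)$ and hence cannot fill the one-dimensional $r(X)$. For such $\chi$, \cref{lem:dim1} gives $\dim\mathcal Z(K,\Adr)=1$, and \cref{prop:Cotangent} identifies $H_1(\partial\cM,\Adr)$ with $T^*_{r(\chi)}X(\partial\cM)$ in such a way that $\mathcal Z(K,\Adr)=\ker i_*$ becomes the Zariski conormal line of $r(X)$ at $r(\chi)$.

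Next I would use the analytic chart on $X(\partial\cM)$ already employed in the torsion-form proof of \cref{prop:slopetors}, namely $\frak l,\frak m\in\CC$ with $\exp\frak l=L$ and $\exp\frak m=M$, so that $d\frak l=dL/L$ and $d\frak m=dM/M$ form a basis of $T^*_{r(\chi)}X(\partial\cM)$. The crucial compatibility is that, under the isomorphism of \cref{prop:Cotangent}, the homology basis $(v_\rho\otimes\ell,\,v_\rho\otimes m)$ corresponds to $(d\frak l,\,d\frak m)$. Granting this, a generator $a\,(v_\rho\otimes\ell)+b\,(v_\rho\otimes m)$ of $\mathcal Z(K,\Adr)$ corresponds to the covector $a\,d\frak l+b\,d\frak m$, which must span the conormal line to $\{A_X=0\}$. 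That conormal line is spanned by
\[
dA_X=\partial_L A_X\,dL+\partial_M A_X\,dM=\bigl(L\,\partial_L A_X\bigr)\,d\frak l+\bigl(M\,\partial_M A_X\bigr)\,d\frak m,
\]
so $[a:b]=[\,L\,\partial_L A_X:M\,\partial_M A_X\,]$, and \cref{def:slope} yields
\[
s_K(\chi)=-\frac ba=-\frac{M\,\partial_M A_X}{L\,\partial_L A_X}=-\frac ML\cdot\frac{\partial_M A}{\partial_L A}.
\]
This establishes the identity on a dense open subset; since the left-hand side is rational on $X$ by \cref{cor:slExt} and the right-hand side is rational by construction, they agree on all of $X$ by analytic continuation.

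The hard part will be the compatibility claim in the third paragraph: that the Sikora isomorphism of \cref{prop:Cotangent} sends $v_\rho\otimes\ell$ exactly to $d\frak l=d\log L$ and $v_\rho\otimes m$ to $d\frak m=d\log M$, rather than to some other scaling or permutation of these that would corrupt the ratio. I expect to verify this by an explicit computation on the torus $\partial\cM$, tracing through Poincar\'e duality how a twisted $1$-cycle $v_\rho\otimes\gamma$ pairs with the logarithmic eigenvalue coordinate attached to $\gamma$; the matching of $\ell$ with $\frak l$ and of $m$ with $\frak m$ is precisely what makes the ordering of $a$ and $b$ in \cref{def:slope} reproduce the sign of the logarithmic Gauss map in \cref{eq:slope}.
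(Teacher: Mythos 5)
Your proposal follows essentially the same route as the paper's proof: both identify $\mathcal Z(K,\Adr)$ with the Zariski conormal line of $r(X)$ inside $X(\partial \cM)$ via \cref{prop:Cotangent}, pass to the logarithmic coordinates $\frak l, \frak m$, and read the slope off the generator $dA = \bigl(L\,\partial_L A\bigr)d\frak l + \bigl(M\,\partial_M A\bigr)d\frak m$ of that line. The compatibility you single out as the ``hard part'' (that $v_\rho\otimes\ell$ and $v_\rho\otimes m$ correspond to $d\frak l$ and $d\frak m$) is precisely what the paper treats as following from the very definition of the slope (as in its torsion-form proof of \cref{prop:slopetors}), so your extra care on that point, on genericity, and on the reducible factor only makes the same argument more complete.
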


\begin{remark}
	Combining \cref{prop:slopetors} with \cite[Corollaire 4.9]{Porti97}, the result of \cref{th:slopeApoly} follows directly, up to sign, in the case where $X$ has itself dimension $1$. We resolve those two issues. Moreover \cref{th:slopeApoly} does not require the characters in $X$ to be scheme-reduced, and the factors of the A-polynomial might have multiplicities greater than $1$.
\end{remark}

\begin{proof}
	From \cref{prop:Cotangent} it follows that the Lagrangian $\mathcal Z(\cM, \Adr)$ generically identifies with the Zariski conormal bundle of $r(X(\cM))$ in $X(\partial \cM)$.
	Picking local coordinates $\mathfrak l = \log L, \mathfrak m= \log M$ around $r(\chi)$, the kernel of the cotangent map is generated by
	\[d A(e^\mathfrak l, e^\mathfrak m) = \partial_\mathfrak l A(e^\mathfrak l, e^\mathfrak m) d \mathfrak l + \partial_\mathfrak m A(e^\mathfrak l, e^\mathfrak m) d \mathfrak m\]
	in $\CC^2 = \langle d\frak l ,d\frak m  \rangle$.
	Using the chain rule, we obtain that it is generated by the vector
	\[\left(L \frac{\partial A(M,L)}{\partial L}, M \frac{\partial A(M,L)}{\partial M} \right) \]
	and the proposition follows.
\end{proof}

\begin{remark}
	Let $T$ be the right-handed trefoil knot, with $A_T(L,M)=1+LM^6$. \cref{th:slopeApoly} gives
	\[s_{T} = -\frac M L \cdot \frac{6M^5L}{M^6} = -6.\]
	Compare to \cref{ex:trefoil}.
\end{remark}

\section{The slope at an ideal point}
\label{subsec:ideal}
In this section we prove \cref{theo:slope}. The context of this result is the work of Culler--Shalen (see for instance \cite{Sha02}) which associates incompressible surfaces in $\cM$ to ideal points of curves of $X(\cM)$.

Let $X\subset X(\cM)$ be an irreducible component whose image $r(X)=Y$ is a curve in $X(\partial \cM)$, defined as the zero locus of an irreducible factor $P$ of $A_K(L,M)$.
Its function ring is usually denoted by $\CC[Y] = \CC[L,M]/(P)$, and its function field is $\CC(Y) = \operatorname{Frac}(\CC[Y])$.

To any point $y$ in $Y$ one can associate a \emph{discrete valuation} $v$ on the multiplicative group~$\CC(Y)^*$ in the field $\CC(Y)$ of rational functions on $Y$. A discrete valuation $v \colon \CC(Y)^* \to \ZZ$ is a group epimorphism satisfying $v(f+g) \ge \min (v(f), v(g))$.
The valuation associated to a smooth point $y$ is simply the map
\[f \mapsto v_y(f) = \ord_y f\]
given the vanishing order of $f$ at the point $y$.
More generally, the smooth projective model $\overline Y$ of $Y$ is smooth compact curve birational to $Y$, canonically defined up to isomorphism, and the points of $\overline Y$ are bijectively associated to discrete valuations on the function field $\CC(Y)\simeq \CC(\overline Y)$.

An \emph{ideal} point $y$ of $Y$ is a point added "at infinity" in the smooth projective model $\overline Y$, it corresponds to a valuation $v_y$ on $\CC(Y)$ such that not every regular function $f \in \CC[Y]$ has non-negative valuation $v_y(f)$. In other words, some regular functions (at least one) should have poles at $y$.

In \cite{CS83}, Marc Culler and Peter Shalen gave a procedure to construct an \emph{incompressible} surface $\Sigma$ in $\cM$ from the data of an ideal point $x$ in a subcurve $C$ of $X(\cM)$ together with the valuation $v_xÂ \colon \CC(C)^* \to \ZZ$. Not any ideal point $x \in X(\cM)$ yields an ideal point $y=r(x) \in X(\partial \cM)$.

In this special case, the ideal point $y$ in $Y$ gives an incompressible surface in $\cM$ of a particular kind: as observed in \cite[Proposition 3.1]{CCGLS}, the incompressible surface $\Sigma$ must have non-empty boundary $\partial \Sigma \subset \partial \cM$. The curve $\partial \Sigma$ is a finite union of parallel circles in $\partial \cM$ 
and uniquely determines a \emph{boundary slope} in $\QQ \cup \{\infty\}$: the slope of $a \ell + b m$ in $H_1(\partial \cM;\ZZ)$ is the rational number $\frac b a$.

On the other hand, the Newton polygon of $A_K(L,M) = \sum_{i,j} a_{i,j} L^iM^j$ is the convex hull in $\CC^2$ of the points $\{(i,j) \in \ZZ^2 \mid a_{i,j} \neq 0\}$. It is a convex polygon of $\CC^2$ with integral vertices, whose sides have a slope in $\QQ \cup \{\infty\}$.
In \cite{CCGLS}, Culler, Cooper, Gillet, Long and Shalen proved the following result:
\begin{theorem}{\cite[Theorem 3.4]{CCGLS}}
	The slopes of the sides of the Newton polygon of the $A$-polynomial $A_K(L,M)$ are boundary slopes of incompressible surfaces in $\cM$ which correspond to ideal points of one-dimensional components of $r^*(X(\cM))$ in $X(\partial \cM)$.
\end{theorem}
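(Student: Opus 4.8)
The plan is to derive the statement from the machinery of the previous sections, turning \cref{theo:slope} and \cref{th:slopeApoly} into a dictionary between the two notions of slope; this realises the remark made in the introduction. Fix an irreducible factor $P$ of $A_K$ with zero locus $Y = \{P = 0\}$, and a one-dimensional component $X \subset X(\cM)$ with $r(X) = Y$. Since the Newton polygon of $A_K$ is the Minkowski sum of the Newton polygons of its irreducible factors, every side slope of $A_K$ occurs as a side slope of some such $P$, so it is enough to produce, for each side $\sigma$ of the Newton polygon of $P$ of slope $s_\sigma \in \QQ \cup \{\infty\}$, a Culler--Shalen surface with boundary slope $s_\sigma$.

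First I would match the sides of the Newton polygon with the ideal points of the smooth projective model $\overline Y$. Writing $P = \sum_{i,j} a_{i,j} L^i M^j$, a side $\sigma$ with endpoints $(i_1, j_1)$ and $(i_2, j_2)$ selects a branch at infinity of $\overline Y$ along which the monomials indexed by the lattice points of $\sigma$ are the dominant terms. On this branch $L^{i_2 - i_1} M^{j_2 - j_1}$ stays bounded while $\log L, \log M \to \infty$, so that
\[\frac{\log L}{\log M} \longrightarrow -\frac{j_2 - j_1}{i_2 - i_1} = -s_\sigma .\]
This pins down an ideal point $y_\sigma$ of $\overline Y$, which I would lift along the dominant morphism $r \colon X \to Y$ to an ideal point $x_\sigma$ of $X$ with $r(x_\sigma) = y_\sigma$; this is the situation where the Culler--Shalen construction produces an incompressible surface $\Sigma$ with non-empty boundary.

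The core computation is then the behaviour of the logarithmic Gauss map at $y_\sigma$. On $\{A_K = 0\}$ the identity $\partial_L A_K \, dL + \partial_M A_K \, dM = 0$ gives $\partial_M A_K / \partial_L A_K = - dL/dM$, hence the logarithmic Gauss map of \cref{eq:slope} equals
\[\frac{M}{L} \cdot \frac{\partial_M A_K}{\partial_L A_K} = -\frac{d\log L}{d\log M}.\]
Because $\log L$ is asymptotically linear in $\log M$ along the branch, with slope $-s_\sigma$ by the previous paragraph, this quantity tends to $s_\sigma$ at $y_\sigma$. By \cref{th:slopeApoly} the slope function satisfies $s_K = -\,(M/L)\,\partial_M A_K / \partial_L A_K$, so $s_K(y_\sigma) = -s_\sigma$; and by \cref{theo:slope} the boundary slope of $\Sigma$ is $-\,s_K(y_\sigma) = s_\sigma$. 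This is exactly the slope of the side $\sigma$, completing the argument.

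The hard part will be the limit computation at the ideal point, namely the claim that the logarithmic Gauss map extends continuously to $\overline Y$ with value $s_\sigma$ at $y_\sigma$. Making this rigorous means controlling the Puiseux expansion of the branch and verifying that the \emph{derivative} $d\log L / d\log M$, and not only the ratio $\log L / \log M$, converges, which requires ruling out logarithmic corrections to the leading balance. A secondary point is the sign and lattice bookkeeping: one must match the boundary-slope convention $b/a$ for a class $a\ell + bm$ with the convention $\tfrac{j_2 - j_1}{i_2 - i_1}$ for the Newton polygon, and check that each relevant side genuinely lifts to an ideal point of a component $X$ to which \cref{theo:slope} applies, since---as noted in the text---not every ideal point of $X(\cM)$ restricts to an ideal point of $X(\partial\cM)$.
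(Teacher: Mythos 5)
You should first be aware that the paper contains no proof of this statement: it is quoted verbatim from Culler--Cooper--Gillet--Long--Shalen as \cite[Theorem 3.4]{CCGLS} and serves as background for \cref{subsec:ideal}. So there is no internal proof to compare against; what you have written is a re-derivation of a cited theorem from the paper's own results. As a route it is exactly the ``reverse reading'' the introduction alludes to when it says \cref{theo:slope} sheds light on the CCGLS theorem, and it is not circular: the paper proves \cref{theo:slopeproof} using only \cite[Proposition 3.1]{CCGLS} (the boundary slope at an ideal point $y$ is $-v_y(L)/v_y(M)$), not Theorem 3.4, so combining \cref{theo:slope} with \cref{th:slopeApoly} to recover Theorem 3.4 is legitimate, and your signs come out right (trefoil: Newton side slope $6$, $s_T\equiv-6$). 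Moreover, the step you single out as ``the hard part''---that $d\log L/d\log M$, and not merely $\log L/\log M$, converges---is not actually delicate in the algebraic category, and the paper already settles it: in the proof of \cref{theo:slopeproof} one writes $L=u_1t^{v_y(L)}$ and $M=u_2t^{v_y(M)}$ for a local parameter $t$ at $y$ and units $u_1,u_2$, whence $r^*(dL/L)/r^*(dM/M)=v_y(L)/v_y(M)$ on the nose, because $du_i/u_i$ is regular at $y$; there are no logarithmic corrections to rule out.

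The genuine gap is in the step you pass over quickly: the claim that every side $\sigma$ of the Newton polygon of an irreducible factor $P$ ``selects a branch at infinity'' of $\overline Y$, i.e.\ that there \emph{exists} an ideal point $y_\sigma$ whose valuation vector $(v_{y_\sigma}(L),v_{y_\sigma}(M))$ is normal to $\sigma$. Once \cite[Proposition 3.1]{CCGLS} is granted, this existence statement is essentially the whole remaining content of Theorem 3.4, and the ``dominant terms'' heuristic does not establish it. The easy direction is the converse: at any ideal point $y$, the functional $(i,j)\mapsto i\,v_y(L)+j\,v_y(M)$ must attain its minimum on the Newton polygon along a side rather than at a single vertex (otherwise $v_y(P)$ would equal the valuation of the unique dominant monomial, hence be finite, contradicting $P=0$ on $Y$), so ideal points always lie over sides. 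To get an ideal point over \emph{every} side you need an actual argument---for instance, compactify $Y$ in the toric surface dual to the Newton polygon and observe that the intersection number of the compactified curve with the boundary divisor attached to $\sigma$ equals the lattice length of $\sigma$, hence is positive; or control Puiseux expansions at infinity. Without this, your argument only shows that those sides which happen to be realized by ideal points have boundary slopes as claimed. Two smaller points to clean up: a component $X\subset X(\cM)$ dominating $Y$ need not be one-dimensional, so to run the Culler--Shalen construction (and to apply the paper's results, whose proofs go through \cref{prop:slopetors}) you must first cut $X$ down to a curve still dominating $Y$; and the factor $L-1$, whose horizontal side contributes the slope $0$, is carried by the reducible component, where the slope is governed by \cref{lem:abel} rather than by the irreducible-character machinery, so that case deserves a separate word.
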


Our next statement (\cref{theo:slope} in the introduction) 
states that the slope invariant studied in this paper coincides with the slopes of \cite{CCGLS} at ideal points.

\begin{theorem}
	\label{theo:slopeproof}
	Let $y$ be an ideal point in a one-dimensional component $Y$ of the $A$-polynomial. Then the value of the slope function at the ideal point $y$ equals \emph{minus} the boundary slope of an incompressible surface corresponding to $y$ or \emph{minus} the slope of the corresponding side of the Newton polygon of the $A$-polynomial.
\end{theorem}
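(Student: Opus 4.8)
The plan is to identify the value of the rational function $s_K$ at $y$ by computing its limit along the curve $Y$, to express that limit purely in terms of the valuation $v_y$, and then to match this expression against both the Culler--Shalen boundary slope and the slope of the Newton polygon. Write $Y=\{P(L,M)=0\}$ for the irreducible factor $P$ cutting out $Y$. By \cref{th:slopeApoly} the slope over $Y$ is the pullback of $-\frac{M}{L}\frac{\partial_M A}{\partial_L A}$, and since $A_K=P\cdot Q$ with $P\nmid Q$ we have $\partial_M A/\partial_L A=\partial_M P/\partial_L P$ on $\{P=0\}$. Hence $s_K=-\frac{M}{L}\frac{\partial_M P}{\partial_L P}$ as a rational function on $Y$; it extends to the smooth projective model $\overline Y$, and its value at the ideal point $y$ is exactly what must be computed.

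First I would trivialize the logarithmic Gauss map along $Y$ by implicit differentiation. Choosing a local uniformizer $t$ at $y$ and parametrizing $L=L(t),\,M=M(t)$, the relation $P(L,M)=0$ yields $\partial_L P\cdot\dot L+\partial_M P\cdot\dot M=0$, so $\partial_M P/\partial_L P=-\dot L/\dot M$ and therefore
\[ s_K=-\frac{M}{L}\cdot\Bigl(-\frac{\dot L}{\dot M}\Bigr)=\frac{\dot L/L}{\dot M/M}=\frac{d\log L}{d\log M}. \]
Now $v_y(L)=\ord_t L(t)$ and $v_y(M)=\ord_t M(t)$, so writing $L=u_L\,t^{v_y(L)}$, $M=u_M\,t^{v_y(M)}$ with $u_L,u_M$ units at $y$ gives $d\log L/dt=v_y(L)/t+O(1)$ and likewise for $M$. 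The leading terms dominate as $t\to 0$, whence
\[ s_K(y)=\lim_{t\to 0}\frac{d\log L/dt}{d\log M/dt}=\frac{v_y(L)}{v_y(M)}\in\CC\mathbb P^1. \]
At least one of the two valuations is nonzero because $y$ is an ideal point of $Y$, so this ratio is well defined in $\CC\mathbb P^1$.

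The core step is to read off the Culler--Shalen boundary slope from the same valuation data. Let $x$ be an ideal point of the component of $X(\cM)$ over $Y$ with $r(x)=y$; its valuation restricts on $\CC(Y)\subset\CC(\cM)$ to a positive multiple of $v_y$, so only the ratio $v_y(L):v_y(M)$ will matter. In the Culler--Shalen construction the ideal point yields an action of $\pi_1(\cM)$ on a tree, and a peripheral class $a\ell+bm$ bounds the essential surface exactly when it fixes a point, equivalently when its trace $L^aM^b+L^{-a}M^{-b}$ has nonnegative valuation. Since $z+z^{-1}$ has nonnegative valuation iff $v_y(z)=0$, the bounded class is characterized by $a\,v_y(L)+b\,v_y(M)=0$, that is $(a,b)\propto(v_y(M),-v_y(L))$. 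By the paper's convention the boundary slope of $a\ell+bm$ is $b/a=-v_y(L)/v_y(M)$, and comparing with the previous paragraph gives $s_K(y)=-(\text{boundary slope})$, which is the first assertion.

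Finally, the Newton polygon statement follows either from the \cite{CCGLS} theorem quoted above, identifying boundary slopes with slopes of sides, or directly: the ideal point $y$ selects the monomials $L^iM^j$ minimizing $i\,v_y(L)+j\,v_y(M)$, which lie on the side of the Newton polygon with inner normal $(v_y(L),v_y(M))$; since the Newton polygon of a product is the Minkowski sum, the same normal direction determines a side of the polygon of $A_K$, with edge direction $(v_y(M),-v_y(L))$ and hence slope $-v_y(L)/v_y(M)$, so $s_K(y)$ equals minus this slope as well. I expect the main obstacle to be the third paragraph: one must carefully invoke the Culler--Shalen dictionary (the valuation-to-tree correspondence, and the fact that the boundary curve of the surface is precisely the bounded peripheral class), fix the sign and scaling conventions so that the boundary slope comes out as $b/a$, and handle the degenerate cases $v_y(M)=0$ and $v_y(L)=0$, corresponding to the slopes $\infty$ and $0$.
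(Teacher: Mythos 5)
Your proposal is correct, and its computational core coincides with the paper's proof: both reduce the theorem to showing $s_K(y)=v_y(L)/v_y(M)$, by expressing the slope along $Y$ as the ratio $d\log L/d\log M$ and evaluating its limit in a local uniformizer $t$ with $L=u_1t^{v_y(L)}$, $M=u_2t^{v_y(M)}$. Where you differ is in how the two halves are sourced. You obtain the log-derivative identity from \cref{th:slopeApoly} together with implicit differentiation along $\{P=0\}$, while the paper extracts $s_K=r^*(dL/L)/r^*(dM/M)$ from the torsion-form discussion in the proof of \cref{prop:slopetors}; your route is slightly more economical, since \cref{th:slopeApoly} is stated under exactly the hypothesis in force here ($r(X)$ one-dimensional) and avoids importing the torsion setup. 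For the second half, the paper simply cites \cite[Proposition 3.1]{CCGLS} for the fact that $-v_y(L)/v_y(M)$ is the boundary slope, whereas you re-derive that fact from the Culler--Shalen tree (boundary curves of the dual surface fix points of the tree, fixing a point is equivalent to the trace $L^aM^b+L^{-a}M^{-b}$ having nonnegative valuation, and the bounded peripheral classes are exactly the kernel of $(a,b)\mapsto a\,v_y(L)+b\,v_y(M)$, a rank-one subgroup), and you additionally give a direct Newton-polygon argument via inner normals and Minkowski additivity, a part the paper leaves implicit through \cite[Theorem 3.4]{CCGLS}. Both additions are correct and make the proof more self-contained; the points to tighten are (i) the existence of an ideal point $x$ of the curve in $X(\cM)$ lying over $y$, needed to run the tree construction --- this holds because $L+L^{-1}$ or $M+M^{-1}$ is a regular function on $X(\cM)$ with a pole at any preimage of $y$, but deserves a line; (ii) your ``bounds the surface exactly when it fixes a point'' is really two separate statements (boundary curves stabilize edges, hence fix points; conversely the fixed classes form a rank-one subgroup because some peripheral trace has a pole); and (iii) if $P$ divides $A_K$ with multiplicity $k>1$ (which the paper explicitly allows), your factorization $A_K=PQ$ with $P\nmid Q$ must be adjusted, though the cancellation $\partial_MA/\partial_LA=\partial_MP/\partial_LP$ on $Y$ still goes through.
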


\begin{proof}
	The coordinate functions $L,M$ define rational functions on $Y$, in particular their valuations $v_y(L)$ and $v_y(M)$ are well-defined. Since $y$ is an ideal point and $L,M$ generate the coordinate ring $\CC[Y]$ of the curve $Y$, at least one of this valuation must be negative, 
	and at least one of these coordinate functions must have a pole at $y$.

	\begin{claim}
		The value of $s_K$ at the ideal point $y$ is $\frac{v_y(L)}{v_y(M)}$.
	\end{claim}
	\begin{proof}[Proof of the claim]
		From the proof of \cref{prop:slopetors}, we deduce that the value of the slope at $y$ is given by
		\[s_K(y) = \lim_{(L,M) \to y} \frac {r^*(dL/L)}{r^*(dM/M)}.\]
		The following argument is an algebraic analogue of taking Taylor expansion of the functions $L$ and $M$ around the ideal point $y$.
		We pick $t$ a local coordinate around $y$. It is characterized by $v_y(t) = 1$, and we can write \[L = u_1 t^{v_y(L)}\] for $u_1 \in \CC(Y), v_y(u_1)=0$, and similarly \[M = u_2 t^{v_y(M)}\]  for $u_2 \in \CC(Y), v_y(u_2)=0$.
				Moreover, near $y$ it follows that
		\[\frac {r^*(dL/L)}{r^*(dM/M)} = \frac{v_y(L)/t}{v_y(M)/t} = \frac{v_y(L)}{v_y(M)}\]
		and the claim follows.
	\end{proof}

	Now \cref{theo:slope} follows directly from the claim, because it is proven in \cite[Proposition 3.1]{CCGLS} that the quantity $-\frac{v_y(L)}{v_y(M)}$ is the boundary slope of an incompressible surface corresponding to $y$.
\end{proof}

\bibliography{biblio}
\bibliographystyle{plain}

\end{document}